\numberwithin{equation}{section}
\def \ds{\displaystyle}
\def \wt{\widetilde}
\def \R{\mathbb{R}}
\def \C{\mathbb{C}}
\def \Z{\mathbb{Z}}
\def \N{\mathbb{N}}
\def \v{\varphi}
\def \f{\phi}
\def \g{\psi}
\def \e{\varepsilon}
\def \l{\lambda}
\def \d{\delta}
\def \D{\Delta}
\def \pa{\partial}
\def \n{\nabla}
\def \s{\sigma}
\def \a{\alpha}
\def \b{\beta}
\def \n{\nabla}
\def \t{\theta}
\def \P{\Phi}
\def \ta{\tau}
\def \ga{\gamma}
\def \om{\omega}
\def \ka{\kappa}
\newcommand{\im}{\operatorname{Im}}
\newcommand{\re}{\operatorname{Re}}
\newcommand{\I}{\infty}
\newcommand{\norm}[1]{\left\lVert #1\right\rVert}
\newcommand{\Lebn}[2]{\left\lVert #1 \right\rVert_{L^{#2}}}
\newcommand{\Sobn}[2]{\left\lVert #1 \right\rVert_{H^{#2}}}
\newcommand{\m}[1]{\mathcal{#1}}
\newcommand{\TLebn}[3]{\left\lVert #1 \right\rVert_{L^{#2}(0,T; L^{#3})}}
\newcommand{\TSobn}[3]{\left\lVert #1 \right\rVert_{L^{#2}(0,T; H^{#3})}}
\newcommand{\TFLebn}[4]{\left\lVert #1 \right\rVert_{L^{#2}(0,T; H^{#3}_{#4})}}
\def\({\left(}
\def\){\right)}
\def\<{\left\langle}
\def\>{\right\rangle}
\def\le{\leqslant}
\def\ge{\geqslant}
\theoremstyle{plain}
\newtheorem{theorem}{Theorem}[section]
\newtheorem{definition}[theorem]{Definition}
\newtheorem{lemma}[theorem]{Lemma}
\newtheorem{proposition}[theorem]{Proposition}
\theoremstyle{remark}
\newtheorem{remark}[theorem]{Remark}
\begin{document}
\title[Strong blow-up instability for standing wave to NLKGS]{Strong blow-up instability for standing wave solutions to the system of the quadratic nonlinear Klein-Gordon equations}
\author[H. Miyazaki]{Hayato Miyazaki}
\address[]{Advanced Science Program, Department of Integrated Science and Technology, National Institute of Technology, Tsuyama College, Tsuyama, Okayama, 708-8509, Japan}
\email{miyazaki@tsuyama.kosen-ac.jp}
\keywords{systems of nonlinear Klein-Gordon equations, standing waves, instability}
\subjclass[2010]{35L71, 35B35, 35A15}
\date{}

\maketitle

\begin{abstract}
This paper is concerned with strong blow-up instability (Definition \ref{VSI}) for standing wave solutions to the system of the quadratic nonlinear Klein-Gordon equations. In the single case, namely the nonlinear Klein-Gordon equation with power type nonlinearity, stability and instability for standing wave solutions have been extensively studied. On the other hand, in the case of our system, there are no results concerning the stability and instability as far as we know. 

In this paper, we prove strong blow-up instability for the standing wave to our system. The proof is based on the techniques in Ohta and Todorova \cite{OT}. It turns out that we need the mass resonance condition in two or three space dimensions whose cases are the mass-subcritical case.
\end{abstract}

\tableofcontents

\section{Introduction} \label{sec:1}
In this paper, we consider the system of the quadratic nonlinear Klein-Gordon equation 
\begin{align}
\begin{cases}
	\ds \frac{1}{c^2} \pa^2_t u -  \D u + m^2 u = \l \overline{u} v, \ (t,x) \in \R_{+} \times \R^{N}, \\[7pt]
	\ds \frac{1}{c^2} \pa^2_t v -  \D v + M^2 v = \mu u^2, \ (t,x) \in \R_{+} \times \R^{N},
\end{cases}
	\label{onlkg} \tag{NLKG}
\end{align}
where $2 \le N \le 5$, $m>0$ and $M >0$ denote the mass of the particles, $\l$ and $\mu$ are complex constants, $c>0$ is the speed of light, and $(u, v)$ is a $\C^2$-valued unknown function with respect to $(t,x)$. 
\eqref{onlkg} is a reduced model of the Dirac-Klein-Gordon system concerning proton-proton interactions and the Maxwell-Higgs system which appears in the abelian Higgs model (cf. \cite{HIN1, YT1}).
Under the mass resonance condition $M = 2m$, \eqref{onlkg} is regarded as a relativistic version of the system of the quadratic nonlinear Schr\"odinger equation
\begin{align}
    \begin{cases}
	\ds i\pa_t v_1 +\frac{1}{2m} \D v_1 = \l_1 \overline{v_1} v_2, \\[7pt]
	\ds i\pa_t v_2 + \frac{1}{2M}\D v_2 = \mu_1 v_1^2.
    \end{cases}
\label{nls} \tag{NLS}
\end{align}
Indeed, by considering the modulated wave solution to \eqref{onlkg} of the form
\[
	(u_c, v_c) = \(e^{itm c^2} u, e^{2itM c^2}v \),
\]
\eqref{onlkg} is rewritten as
\begin{align}
	\begin{cases}
	\ds \frac{1}{c^2} \pa^2_t u_c  - 2im \pa_t u_c - \D u_c  = \l e^{it(2m-M)c^2} \overline{u_c} v_c, \\[7pt]
	\ds \frac{1}{c^2} \pa^2_t v_c  - 2iM \pa_t v_c - \D v_c  = \mu e^{it(M-2m)c^2} u_c^2.
	\end{cases}
	\label{back:1}
\end{align}
Taking $c \to \I$, under $M = 2m$, \eqref{back:1} reaches to \eqref{nls} with $\l_1 = - \l/2m$ and $\mu_1 = - \mu/2M$ (see \cites{HOT}).

We here assume $\l = e \overline{\mu}$ for some $e >0$ and $c=1$ in \eqref{onlkg}. 
By the scaling, 
\eqref{onlkg} can be reduced to
\begin{align}
\begin{cases}
	\pa^2_t u_1 -  \D u_1 + u_1 = \overline{u_1} u_2, \\
	\pa^2_t u_2 -  \D u_2 + \ka^2 u_2 = u_1^2,
\end{cases}
	\label{nlkg} 
\end{align}
where $\ka = M/m$.
We set $\vec{u} = (u_1, u_2)$. 
Formally, \eqref{nlkg} has the conserved energy
\begin{align*}
	E(\vec{u}, \pa_t \vec{u}) ={}& \frac12 \( \Lebn{\pa_t u_1}{2}^2 + \frac12\Lebn{\pa_t u_2}{2}^2 \) + \frac12 \( \Lebn{\n u_1}{2}^2 + \frac12 \Lebn{\n u_2}{2}^2 \) \\
	&{}+ \frac12 \( \Lebn{u_1}{2}^2 +\frac{\ka^2}{2} \Lebn{u_2}{2}^2 \) - \frac12 \re \( u_1^2, u_2 \)_{L^2} 
\end{align*}
and the conserved charge
\begin{align*}
	Q(\vec{u}, \pa_t \vec{u}) ={}& \im \int_{\R^N} \overline{u_1}\pa_t u_1 dx + \im \int_{\R^N} \overline{u_2}\pa_t u_2 dx. 
\end{align*}

The purpose of this paper is to investigate instability of the standing wave solution to \eqref{nlkg} of the form
\[
	\vec{u} = \( e^{i\omega t}\f_{1, \omega}, e^{2i\omega t} \f_{2, \omega} \),
\]
where $\om\in\R$ and $\vec{\f}_{\om} = \(\f_{1, \omega}, \f_{2, \omega}\)$ is a $\R^2$-valued function. 
In the case of \eqref{nls}, Hamano \cite{Ha1} proves strong blow-up instability (see Definition \ref{VSI} below) of standing wave solutions in $N=5$ by giving a threshold for scattering or blow-up below the ground state (see also Dinh \cite{V2}). In \cite{V1}, Dinh investigates stability of standing solutions for $N \le 3$. 
On the other hand, In Garrisi \cite{DG1}, and Zhang, Gan and Guo \cite{ZGG1}, stability of standing wave solutions to the system of nonlinear Klein-Gordon equations has been studied. However the nonlinearity of their system is different from that of \eqref{nlkg}.
In terms of \eqref{nlkg}, stability and instability of standing solutions have not been studied as far as we know.

Let us here deal with known results for stability and instability of the standing wave solution to the focusing nonlinear Klein-Gordon equation with the $p$-th order power nonlinearity
\begin{align}
    \pa_t^2 u - \D u + u = |u|^{p-1}u,
    \label{skg}
\end{align}
where $u=u(t,x)$ is an unknown complex valued function and $p>1$.
In the case of \eqref{skg}, there are many literatures for the stability and instability of standing wave solutions.
Berestycki and Cazenave \cite{BC} (also see Payne and Sattinger \cite{PS1}, and Shatah \cite{Sh2}) prove that the standing wave solution to \eqref{skg} is strongly blow-up unstable when $\om =0$ and $1<p<1+4/(N-2)$, where $\f_{\om}$ is the ground state, namely an unique radially symmetric positive solution to
\begin{align}
	-\D \f + (1-\om^2) \f - |\f|^{p-1} \f = 0. \label{gss:0}
\end{align}
As fundamental works of the orbital stability, Shatah \cite{Sh} prove the orbital stability for standing wave solutions $e^{it \om}\f_{\om}$ to \eqref{skg} if $p<1+4/N$ and $\om_0 < |\om| < 1$, where
\[
	\om_0 = \sqrt{\frac{p-1}{4-(N-1)(p-1)}}.
\]
Shatah and Strauss \cite{ShS} also show that $e^{it \om}\f_{\om}$ is orbitally unstable when $p < 1+ 4/N$ and $|\om| < \om_0$ or $p>1+4/N$ and $|\om| <1$.
We here recall that the orbital stability means stability up to translations and phase shifts as follows: we say that $e^{it \om}\f_{\om}$ is orbital stable if for any $\e >0$, there exists $\d>0$ such that for any $(u_0, u_1) \in H^1(\R^N) \times L^2(\R^N)$ with $\norm{(u_0, u_1) - (\f_{\om}, i\om \f_{\om})}_{H^1 \times L^2} < \d$, the solution $u(t)$ to \eqref{skg} with $u(0) = u_0$ and $\pa_t u(0) = u_1$ exists for all $t \ge 0$ and satisfies 
\[
	\sup_{t \ge 0} \inf_{\t \in \R,\, y \in \R^N} \norm{(u(t), \pa_t u(t)) - e^{i\t}(\f_{\om}(\cdot + y), i\om \f_{\om}(\cdot+y))}_{H^1 \times L^2} < \e.
\]
Otherwise, one calls that $e^{it \om}\f_{\om}$ is orbitally unstable.
As for subsequent results in $N=1$, see Comech and Pelinovsky \cite{CP}, and Wu \cite{Wu}. we also refer to Grillakis, Shatah and Strauss \cite{GSS1, GSS2}, and Maeda \cite{M1} regarding general theory for the orbital stability of solitary wave solutions to an abstract Hamiltonian system.


In what follows, we only handle the case $p=2$, in order to compare \eqref{nlkg} with \eqref{skg}.
Ohta and Torodova \cite{OT2} prove the strong blow-up instability for $e^{it \om}\f_{\om}$ as long as $3 \le N \le 5$ and $|\om| < 1/\sqrt{5}$.
In \cite{OT}, they further establish the strong blow-up instability when $2 \le N \le 5$, $|\om| < 1$ and $|\om| \le \om_0$ if $N =2$, $3$.
In the case $|\om| = \om_0$, it is also shown in \cite{OT} that $e^{it\om} \f$ is strongly blow-up unstable under $N =2$, $3$ and $\f$ is an radially symmetric solution to \eqref{gss:0}.
Liu, Ohta and Todorova \cite{LOT} also establish that $e^{it \om}\f_{\om}$ is the strong blow-up unstable in $N=1$ whenever $0 < \om^2 < 1/6$.

%
In view of the previous works, 
as the first step of the investigation for stability and instability of standing wave solutions to the system such as \eqref{nlkg}, we aim to establish instability for the standing wave solution to \eqref{nlkg} in the energy-subcritical cases $2 \le N \le 5$ by applying the technique in \cite{OT}.

\subsection{Main results} 
Let us consider the standing wave solution $\vec{u}$ of the form
\[
	\vec{u} = \( e^{i\omega t}\f_{1, \omega}, e^{2i\omega t} \f_{2, \omega} \)
\]
for all $\om\in\R$, where $\vec{\f}_{\om} = \(\f_{1, \omega}, \f_{2, \omega}\)$ is a $\R^2$-valued function. If $\vec{u}$ satisfies \eqref{nlkg}, then $\vec{\f}_{\om}$ satisfies the system of elliptic equations
\begin{align}
	\begin{cases}
	\ds  -\D \f_{1, \om} + (1 -\om^2) \f_{1, \om} = \f_{1, \om} \f_{2, \om}, \\
	\ds  -\D \f_{2, \om} + (\ka^2- 4\om^2) \f_{2, \om} = \f_{1, \om}^2.
	\end{cases}
	\label{snlkg}
\end{align}
We here give a definition of solutions to \eqref{snlkg} as follows:

\begin{definition}[\cite{HOT}]
We call that a pair of real-valued functions $\vec{\f} \in (H^1(\R^N))^2$ is a solution to \eqref{snlkg} if 
\begin{align*}
	\int_{\R^N} \n \f_1 \cdot \n v_1\ dx + (1 -\om^2) \int_{\R^N} \f_1 \cdot v_1 \ dx = \int_{\R^N} \f_1 \f_2 \cdot v_1\ dx, \\ 
	\int_{\R^N} \n \f_2 \cdot \n v_2\ dx + (\ka^2 - 4\om^2) \int_{\R^N} \f_2 \cdot v_2 \ dx = \int_{\R^N} \f_1^2 \cdot v_2\ dx 
\end{align*}
for any $\vec{v} \in (C_{0}^\I (\R^N))^2$. Note that $\vec{\f}$ is a solution to \eqref{snlkg} if and only if  $J'(\vec{\f})\vec{v} =0$ for any $\vec{v} \in (H^1(\R^N))^2$.
\end{definition}

The static energy $J_{\omega}$ corresponding to \eqref{snlkg} is defined by
\begin{align*}
	J_{\omega}(\vec{u}) ={}& \frac12 \( \Lebn{\n u_1}{2}^2 + \frac12 \Lebn{\n u_2}{2}^2 \) \\
	&{} + \frac12 \( \(1 - \omega^2\) \Lebn{u_1}{2}^2 + \frac{\ka^2 - 4\omega^2}{2} \Lebn{u_2}{2}^2 \) \\
	&{}- \frac12 \re \( u_1^2, u_2 \)_{L^2}.
\end{align*}

The definition of the ground state in this paper is the following:

\begin{definition}[\cite{HOT}] \label{gsdef}
We say that a pair of real-valued functions $\vec{\f}_{\om} \in H^1 \times H^1$ is the ground state for \eqref{snlkg} if $\vec{\f}_{\om} \in \m{G}_{\om}$, where
\begin{align*}
	\m{G}_{\om} ={}& \{\vec{\f}_{\om} \in \m{C}_{\om}\ |\ J_{\om}(\vec{\f}_{\om}) \le J_{\om}(\vec{\v}_{\om})~\text{for any}~\vec{\v}_{\om} \in \m{C}_{\om} \}, \\
	\mathcal{C}_{\om} ={}& \{\vec{\v}_{\om} \in \(H^1_{\rm{rad}}(\R^N)\)^2 \; |\; \vec{\v}_{\om}\; \text{is a nontrivial critical point of}\; J_{\om} \},
\end{align*}
and $\vec{\f}$ is called a critical point of $J_{\om}$ if $J'(\vec{\f})\vec{v} =0$ for any $\vec{v} \in (H^1(\R^N))^2$.
\end{definition}

We here remark that it is proved in \cite{HOT} that \eqref{snlkg} has positive radially symmetric solutions in $\m{G}_{\om}$ if $N \le 5$ and $|\om| < \min \(1, \ka/2\)$.

Before we state the main results, we give definitions of strong blow-up instability for the standing wave solution.

\begin{definition}[Strong blow-up instability] \label{VSI}
Let $\vec{\f} \in (H^1(\R^N))^2$. We say $(e^{i\om t}\f_{1}, e^{2i\om t}\f_{2})$ is very strong unstable for \eqref{nlkg} if for any $\e >0$, there exists $(\vec{\v}, \vec{\g}) \in (H^1(\R^N))^2 \times (L^2(\R^N))^2$ such that 
\[
	\norm{(\v_1, \g_1) - (\f_{1}, i\om \f_{1})}_{H^1 \times L^2} + \norm{(\v_2, \g_2) - (\f_{2}, 2i\om \f_{2})}_{H^1 \times L^2} < \e
\]
and the solution $\vec{u}(t)$ to \eqref{nlkg} with $\vec{u}(0) = \vec{\v}$ and $\pa_t \vec{u}(0) = \vec{\g}$ blows up at finite time, namely $T_{\max}< \I$  and
\[
	\lim_{t \to T_{\max-0}} \norm{(\vec{u}, \pa_t \vec{u})(t)}_{H^1 \times L^2} = \I,
\]
where
\[
	\norm{(\vec{u}, \pa_t \vec{u})}_{H^1 \times L^2} = \norm{(u_1, \pa_t u_1)}_{H^1 \times L^2} + \norm{(u_2, \pa_t u_2)}_{H^1 \times L^2}.
\]
\end{definition}


We are in position to state the main results.

\begin{theorem} \label{thm:1}
Let $2 \le N \le 5$ and $\om \in \R$ be satisfy $|\om| < \min \(1, \ka/2\)$. Take $\vec{\f}_{\om} \in \m{G}_{\om}$. Assume that $|\om| \le \om_c := \frac{1}{\sqrt{5-N}}$ and $\ka = 2$ if $N=2$, $3$. Then the standing wave solution $(e^{i\om t}\f_{1, \om}, e^{2i\om t}\f_{2, \om})$ to \eqref{nlkg} is strongly blow-up unstable.
\end{theorem}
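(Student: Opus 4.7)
The plan is to adapt the Payne--Sattinger / Ohta--Todorova scheme (as in \cite{OT}) to the system \eqref{nlkg}. I would first introduce a Nehari--Pohozaev-type functional $\m{K}_\om$ on $\(H^1(\R^N)\)^2$, defined as $\frac{d}{d\l}\big|_{\l=1} J_\om(\vec{\f}_\om^\l)$ for a one-parameter scaling $\vec{\f}_\om^\l$ acting on both components simultaneously, with exponents tuned to \eqref{snlkg}. The minimality of $d_\om := J_\om(\vec{\f}_\om)$ on $\m{G}_\om$ combined with a standard fibering argument would then yield the variational characterization
\[
    d_\om = \inf\bigl\{ J_\om(\vec u) : \vec u \in \(H^1(\R^N)\)^2\setminus\{0\},\ \m{K}_\om(\vec u)=0 \bigr\}.
\]

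With this in hand I would define the candidate invariant set
\[
    \m{K}_\om^- := \bigl\{(\vec u,\vec v) \in \(H^1(\R^N)\)^2\times\(L^2(\R^N)\)^2 : E(\vec u,\vec v) - \om Q(\vec u, \vec v) < d_\om,\ \m{K}_\om(\vec u) < 0 \bigr\}
\]
and verify its invariance under the flow of \eqref{nlkg}. This follows from conservation of $E$ and $Q$ together with continuity of $t \mapsto \m{K}_\om(\vec u(t))$: a trajectory cannot cross $\m{K}_\om = 0$ without producing $J_\om(\vec u(t)) \ge d_\om$, which is incompatible with $E - \om Q < d_\om$. To satisfy Definition \ref{VSI}, for each $\e > 0$ I would construct initial data in $\m{K}_\om^-$ within $\e$ of $(\vec{\f}_\om,\, i\om(\f_{1,\om},2\f_{2,\om}))$ by taking $(\vec{\f}_\om^\l,\, i\om(\f_{1,\om}^\l,2\f_{2,\om}^\l))$ with $\l$ in a one-sided neighborhood of $1$; the minimality of $\l=1$ along the curve $\l \mapsto J_\om(\vec\f_\om^\l)$ gives both $J_\om(\vec\f_\om^\l) < d_\om$ and $\m{K}_\om(\vec\f_\om^\l) < 0$ there.

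The core step is a Levine-type convexity estimate for solutions starting in $\m{K}_\om^-$. Setting
\[
    H(t) := \frac12\Lebn{u_1(t)}{2}^2 + \frac14\Lebn{u_2(t)}{2}^2,
\]
I would compute $H'(t)$ and $H''(t)$ using \eqref{nlkg}, substitute the conservation laws for $E$ and $Q$, and rearrange $H''(t)$ into a sum of $\Lebn{\pa_t\vec u}{2}^2$-type terms, a negative multiple of $d_\om - (E - \om Q)$, and a negative multiple of $\m{K}_\om(\vec u(t))$. The bounds $\m{K}_\om(\vec u(t)) \le -\d < 0$ and $E - \om Q < d_\om$ should then yield a concavity relation of the form $H(t)H''(t) - (1+\a)H'(t)^2 \ge 0$ for some $\a > 0$, forcing $T_{\max} < \I$.

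The main obstacle is this final virial computation. The cross term $\re(u_1^2,u_2)_{L^2}$ couples the two components in both $E$ and $H''(t)$ with incompatible weights, and in the mass-subcritical dimensions $N=2,3$ only the mass resonance $\ka=2$ makes the linear masses, the quadratic nonlinearity, and the virial weights in $H$ scale-consistent enough to collapse $H''$ into $\m{K}_\om(\vec u)$ plus coercive pieces. The modulation factors $e^{i\om t}$ and $e^{2i\om t}$ also generate $\om$-dependent lower-order terms in $H''$, and absorbing them into the favorable side of the inequality is precisely what forces the threshold $|\om|\le \om_c = 1/\sqrt{5-N}$. A secondary subtlety is upgrading the strict inequality $\m{K}_\om(\vec u(t)) < 0$ along the flow to a uniform lower bound $\le -\d$, which requires a quantitative refinement of the variational characterization in the first step.
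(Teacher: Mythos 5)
There is a genuine gap in the core blow-up step. You propose a global Levine concavity argument on $H(t)=\frac12\Lebn{u_1(t)}{2}^2+\frac14\Lebn{u_2(t)}{2}^2$, rearranging $H''(t)$ into a negative multiple of the scaling-derivative functional $\m{K}_{\om}$. If $\m{K}_{\om}$ involves spatial dilation (as it must to match the paper's $K$ for $N=4,5$ or $K^0_{\om}$ for $N=2,3$), this rearrangement is impossible: using \eqref{nlkg},
\[
H''(t)=M(\pa_t\vec{u})-L(\vec{u})-\Big(\Lebn{u_1}{2}^2+\tfrac{\ka^2}{2}\Lebn{u_2}{2}^2\Big)+\tfrac32 P(\vec{u}),
\]
and the only conserved quantities available to trade terms are $E$ and $Q$, in which $L$ and $P$ occur in the fixed ratio $+\frac12:-\frac12$. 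Forcing the $L$- and $P$-coefficients to those of $K=2L-\frac{N}{2}P$ or of $K^0_{\om}$ leaves remainders of the form $-c_1\Lebn{\pa_t u_1-i\om u_1}{2}^2-c_2 M_{\om}(\vec{u})$ with the wrong sign, which cannot be absorbed. The only functional the $L^2$-concavity scheme genuinely reaches is the Nehari (amplitude-scaling) functional; that route is exactly \cite{OT2} and yields instability only for $|\om|<1/\sqrt5$ when the nonlinearity is quadratic, strictly short of the asserted range $|\om|<1$ for $N=4,5$ and $|\om|\le 1/\sqrt{5-N}$ for $N=2,3$. (A smaller slip: $\l=1$ is a maximum, not a minimum, of $\l\mapsto J_{\om}(\l\vec{\f}_{\om})$, which is why one perturbs with $\l>1$.)

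The missing idea is the localization that replaces concavity. Since $K$ and $K^0_{\om}$ are generated by dilation, the relevant virial quantity $\sum_j\frac1j\re\(2x\cdot\n u_j+cu_j,\pa_t u_j\)_{L^2}$ is not finite on $\m{H}^1\times\m{H}^0$; the paper truncates the weight $x$ by the radial functions $\Psi_{\rho},\P_{\rho}$ (Lemma \ref{lem:1}), restricts to radially symmetric solutions (this is where radial symmetry of the ground state is essential), and controls the tail error $\frac12\re\int_{|x|\ge\rho}u_1^2\overline{u_2}\,dx+C_0\rho^{-2}M(\vec{u})$ using the uniform bounds of Lemma \ref{lem:uni} together with a radial decay estimate. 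The contradiction comes from integrating $\frac{d}{dt}I^1_{\rho}\ge 2\d-R_{\rho}(t)$ over unit time intervals against the a priori bound $|I^1_{\rho}(T_i)|\le C\rho J$, not from concavity of a power of $H$. Your observation that $\ka=2$ and $|\om|\le\om_c$ are what make the charge term and the $\om$-dependent lower-order terms sign-definite in dimensions $2$ and $3$ is correct, but in the actual proof they enter through the functional $H(\vec{u},\pa_t\vec{u})$ of \eqref{virial:2} and the identity \eqref{fun:6}, not through a Levine inequality. None of the localization machinery, the radial Sobolev estimate, or the uniform space-time bounds appears in your outline, and without them the argument does not close in the stated range of $\om$.
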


In the case $|\om| = \om_c$, we have the following:

\begin{theorem} \label{thm:3}
Let $N=2$, $3$. Assume $\ka = 2$. Let $\vec{\f}$ be any nontrivial radially symmetric solution to \eqref{snlkg} with $\om = \om_c$. Then the standing wave solution $(e^{i\om_c t}\f_1, e^{2i\om_c t}\f_2)$ is strongly blow-up unstable. The same assertion occurs in $\om = -\om_c$.
\end{theorem}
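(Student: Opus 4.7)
The strategy follows Ohta--Todorova \cite{OT}, transplanted to the two-component system \eqref{nlkg}. Fix $\om_c = 1/\sqrt{5-N}$ with $N \in \{2,3\}$ and $\ka = 2$, and let $\vec{\f}$ be the given nontrivial radially symmetric solution of \eqref{snlkg}.

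First I would introduce a Pohozaev/virial functional. Using the $L^2$-preserving dilation $\vec{u}^{\l}(x) := (\l^{N/2} u_1(\l x), \l^{N/2} u_2(\l x))$, set
\[
	K_{\om}(\vec{u}) := \frac{d}{d\l}\bigg|_{\l=1} J_{\om}(\vec{u}^{\l}),
\]
so that the Pohozaev identity for \eqref{snlkg} forces $K_{\om_c}(\vec{\f}) = 0$. A direct computation using $\pa_t \vec{\f} = i\om_c(\f_1, 2\f_2)$ shows $E(\vec{\f}, \pa_t \vec{\f}) - \om_c Q(\vec{\f}, \pa_t \vec{\f}) = J_{\om_c}(\vec{\f})$, so the natural blow-up set is
\[
	\m{B} := \{(\vec{u}, \vec{v}) \in (H^1)^2 \times (L^2)^2 : K_{\om_c}(\vec{u}) < 0,\; E(\vec{u}, \vec{v}) - \om_c Q(\vec{u}, \vec{v}) < J_{\om_c}(\vec{\f})\}.
\]
Invariance of $\m{B}$ along the $H^1 \times L^2$ flow follows from conservation of $E - \om_c Q$ and continuity of $K_{\om_c}(\vec{u}(t))$ in $t$, by adapting the variational exclusion argument of \cite{OT}: the existence of a crossing point in $\{K_{\om_c} = 0\}$ reached from inside $\m{B}$ would realize a nontrivial configuration with $K_{\om_c} = 0$ and $J_{\om_c}$-value strictly below $J_{\om_c}(\vec{\f})$, which the constrained-minimization structure in the radial sector rules out in the critical regime.

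Next comes the blow-up mechanism. For $I(t) := \Lebn{u_1(t)}{2}^2 + \tfrac12 \Lebn{u_2(t)}{2}^2$, computing $I''(t)$ via \eqref{nlkg} and rewriting in terms of $E - \om_c Q$ and $K_{\om_c}$ yields, on $\m{B}$, an estimate of the form $I''(t) \ge \a \Lebn{\pa_t \vec{u}(t)}{2}^2 + \d$ with $\a, \d > 0$. The Levine/Payne--Sattinger concavity argument applied to $(I(t) + \b(t+T_0)^2)^{-\s}$ then forces finite-time blow-up in $H^1 \times L^2$.

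The main obstacle, and where the proof departs from Theorem \ref{thm:1}, is locating initial data inside $\m{B}$ arbitrarily close to $(\vec{\f}, i\om_c \f_1, 2i\om_c \f_2)$. At $|\om| = \om_c$, the Taylor expansion of $E - \om_c Q$ along the one-parameter family $(\vec{\f}^{\l}, i\om_c(\f_1^{\l}, 2\f_2^{\l}))$ vanishes to leading order in $\l-1$, which is precisely the meaning of criticality, so the scaling used for Theorem \ref{thm:1} degenerates. Following \cite{OT}, I would use a two-parameter perturbation in which the initial velocity is additionally scaled:
\[
	\vec{\v}^{\l} := \vec{\f}^{\l},\qquad \vec{\g}^{\l} := i\om_c\l^{s}(\f_1^{\l}, 2\f_2^{\l}),
\]
with a small exponent $s>0$. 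The mass resonance $\ka = 2$ is essential: it allows the Pohozaev-type identities for the two components of \eqref{snlkg} to combine into a single relation ensuring $K_{\om_c}(\vec{\v}^{\l}) < 0$ for $\l > 1$, while the extra factor $\l^{s}$ in $\vec{\g}^{\l}$ breaks the critical degeneracy and pushes $E - \om_c Q$ strictly below $J_{\om_c}(\vec{\f})$. A Taylor expansion of both $K_{\om_c}$ and $E - \om_c Q$ to the required order at $\l = 1$ verifies both conditions for $\l$ sufficiently close to $1$, completing the construction and hence the proof.
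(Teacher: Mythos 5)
There are two genuine gaps, both located exactly where the critical case $|\om|=\om_c$ differs from $|\om|<\om_c$. First, the invariance of your set $\m{B}$ cannot be obtained from a ``constrained-minimization structure'', because Theorem \ref{thm:3} allows $\vec{\f}$ to be \emph{any} nontrivial radially symmetric solution of \eqref{snlkg}, not a ground state. If $J_{\om_c}(\vec{\f})>d_{\om_c}^0$, nothing prevents a trajectory issued from $\m{B}$ from reaching the null set of your functional at a level strictly between $d_{\om_c}^0$ and $J_{\om_c}(\vec{\f})$, so the exclusion argument collapses. The paper needs no invariant set at all at $\om=\om_c$: since $\ka=2$ and $\a=4-N$ give $1-(\a+1)\om_c^2=0$, the only term in \eqref{fun:6} that requires a time-dependent lower bound (the one handled by \eqref{lem:51} when $|\om|<\om_c$) has vanishing coefficient, and $-H(\vec{u}(t),\pa_t\vec{u}(t))\ge 2\d$ follows from conservation of $E$ and $Q$ alone, with $\d=\a\om_c Q(\vec{u}(0),\pa_t\vec{u}(0))-(\a+2)(E-\om_c Q)(\vec{u}(0),\pa_t\vec{u}(0))$. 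Positivity of $\d$ for the plain amplitude perturbation $\vec{u}(0)=\l\vec{\f}$, $\pa_t\vec{u}(0)=\l(i\om_c\f_1,2i\om_c\f_2)$ with $\l>1$ then comes from the Pohozaev-type identity $K_{\om_c}^0(\vec{\f})=0$ (valid for any $H^1$ solution via elliptic regularity) combined with the same cancellation $1-(\a+1)\om_c^2=0$; no two-parameter dilation family, no extra velocity exponent $\l^s$, and no higher-order Taylor expansion are needed.

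Second, your blow-up mechanism --- concavity of a power of $I(t)=\Lebn{u_1}{2}^2+\frac12\Lebn{u_2}{2}^2$ --- does not close in this regime. By \eqref{app:5}, $I''(t)=5M(\pa_t\vec{u}(t))+L(\vec{u}(t))+\Lebn{u_1(t)}{2}^2+\frac{\ka^2}{2}\Lebn{u_2(t)}{2}^2-6E$, which has no favourable sign when $E>0$; compensating $E$ by $\om Q$ costs a Cauchy--Schwarz loss that confines this route to $|\om|<1/\sqrt{5}$ (this is precisely \cite{OT2}), whereas $\om_c=1/\sqrt{5-N}>1/\sqrt{5}$ for $N=2,3$. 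The entire point of \cite{OT}, and of this paper, is to replace the $L^2$-virial by the spatially localized quantity $I_{\rho}^2$ of \eqref{qu:2}, whose time derivative reproduces $-H$ up to error terms supported in $\{|x|\ge\rho\}$ (the inequality \eqref{ineq:2}); those errors are then controlled by radial symmetry (Strauss decay or the dyadic estimates) together with the a priori uniform bounds on a hypothetical global solution (Lemma \ref{lem:uni}, Proposition \ref{lem:0}). Your sketch never confronts the fact that $x\cdot\n u$ is undefined on $H^1$ nor the control of the localization errors, because the $L^2$-virial sidesteps them --- but the $L^2$-virial does not yield blow-up at $\om=\om_c$. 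The asserted estimate $I''(t)\ge\a\Lebn{\pa_t\vec{u}(t)}{2}^2+\d$ on $\m{B}$ is therefore the step that fails.
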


%
%
%

\begin{remark}
There results are an extension of \cite{OT} regarding \eqref{skg} into \eqref{nlkg}.
\end{remark}

\begin{remark}
Arguing as in \cite{OT} due to \cite{MZ, C1}, the existence of a global solution in time satisfying 
\[
	\limsup_{t \to \I} \norm{(\vec{u}, \pa_t \vec{u})(t)}_{H^1 \times L^2} = \I,
\]
can be excluded.

\end{remark}

\begin{remark}
Compared with \eqref{skg}, we need the mass resonance condition $\ka =2$ when $N =2$, $3$. We do not know if the condition is essential.
\end{remark}
\begin{remark}
In order to apply the technique in \cite{OT}, it is crucial that the ground states are radial. Since \eqref{snlkg} is the special case of the system of elliptic equations considered in Brezis and Lieb \cite{BL1}, all of least energy solutions to \eqref{snlkg} are radially symmetric up to a translation in $\R^N$ if $2 \le N \le 5$ and $|\om| < \min \(1, \ka/2\)$ (see Remark 11 in Byeon, Jeanjean and Mari\c{s} \cite{BJM1}).
\end{remark}

\subsection{The key of the proofs}
A strategy of the proofs of the main results rely on the argument in \cite{OT}. This approach is inspired by the technique used to consider some of blow-up problems of solutions to the nonlinear Schr\"odinger equation, for instance, see \cite{OgT1, OgT2}.
In the mass-critical or mass-supercritical case $N=4$, $5$, the key identity of the proof is a local version of a virial type equality
\begin{align}
	-\frac{d}{dt} \sum_{j=1}^2 {}& \frac{1}{j} \re \Big( 2x \cdot \n u_j(t) + Nu_j(t), \pa_t u_j(t) \Big)_{L^2} = K(\vec{u}(t)), \label{virial:1}
\end{align}
where 
\begin{align*}
	K(\vec{u}) ={}& 2\pa_{\l}J_{\om}(\l^{\frac{N}2} \vec{u}(\l \cdot) )|_{\l=1} \\
	={}& 2  \Lebn{\n u_1}{2}^2 + \Lebn{\n u_2}{2}^2  - \frac{N}2 \re \( u_1^2, u_2 \)_{L^2}.
\end{align*}
In the mass-subcritical case $N=2$, $3$, the proof is based on a use of a modified virial type equality
\begin{align}
	-\frac{d}{dt} \sum_{j=1}^2 {}& \frac{1}{j} \re \Big( 2x \cdot \n u_j(t) + 4u_j(t), \pa_t u_j(t) \Big)_{L^2} = H(\vec{u}(t), \pa_t \vec{u}(t)), \label{virial:2}
\end{align}
where  $\a = 4-N$ and
\begin{align*}
	H(\vec{u}, \pa_t \vec{u}) ={}& - \a \(\Lebn{\pa_t u_1}{2}^2 + \frac12\Lebn{\pa_t u_2}{2}^2 \) \\
	&{}+ \a \( \Lebn{u_1}{2}^2 +\frac{\ka^2}{2} \Lebn{u_2}{2}^2\) \\
	&{}+(\a+2) \( \Lebn{\n u_1}{2}^2 + \frac12 \Lebn{\n u_2}{2}^2  - \re \( u_1^2, u_2 \)_{L^2} \).
\end{align*}
The left hand side of \eqref{virial:1} and \eqref{virial:2} are not well-defined on $(H^1(\R^N))^2 \times (L^2(\R^N))^2$. Hence we need to approximate the weight function $x$ in \eqref{virial:1} and \eqref{virial:2} by the suitable bounded radial weighted function given in section \ref{sec:3}.
In order to control the error terms generated by the approximation (see Lemma \ref{lem:1} below), one exploits techniques given in the proof of Theorem A in \cite{OT}, which forces to remove the case $N=1$ and the ground states are restricted to be radially symmetric.
%

\subsection{Notation}
We introduce some notations throughout this paper.
For any $p \ge 1$, $L^p(\R^N) = L^p$ denotes the usual Lebesgue space. 
$(1- \D)^{\frac{s}2}$ stands for the usual Bessel potential for all $s \in \R$.
Let $H_p^s(\R^N) = H_{p}^s$ be the Fourier Lebesgue space  $(1- \D)^{-\frac{s}{2}}L^{p}(\R^N)$ for any $p \ge 1$ and all $s \in \R$. We omit the subscript as $H^s$ when $p=2$.
Set $\m{H}^1(\R^N) = \m{H}^1= H^1 \times H^1$ and $\m{H}^0(\R^N) = \m{H}^0 = L^2 \times L^2$.
Let $X_{\rm{rad}} = \{f \in X\ |\ f(x) = f(|x|),\ x \in \R^N \}$ for any Banach spaces $X$.
We often use the following functionals on $\m{H}^1$:
\begin{align*}
	M (\vec{\f}) ={}& \Lebn{\f_1}{2}^2 + \frac{1}2 \Lebn{\f_2}{2}^2, \\
	M_{\om} (\vec{\f}) ={}& (1-\om^2) \Lebn{\f_1}{2}^2 + \frac{\ka^2 - 4\om^2}2 \Lebn{\f_2}{2}^2, \\
	L (\vec{\f}) ={}& \Lebn{\n u_1}{2}^2 + \frac12 \Lebn{\n u_2}{2}^2, \quad L_{\om} (\vec{\f}) = L(\vec{\f}) + M_{\om}(\vec{\f}), \\
	P(\vec{\f}) ={}& \re (\f_1^2, \f_2)_{L^2}, \quad R_{\om}(\vec{\f}) = L_{\om}(\vec{\f})/P(\vec{\f})^{2/3}, \\
	I_{\om}(\vec{\f}) ={}& L(\vec{\f})M_{\om}(\vec{\f})^{1/2}/P(\vec{\f}), \\
	K_{\om}^0(\vec{u}) ={}& \a M_{\om}(\vec{\f}) + (\a +2)(L(\vec{\f}) - P(\vec{\f})),\quad \a= 4-N.
\end{align*}
Then we simply write
\begin{align*}
	J_{\om}(\vec{\f}) ={}& \frac12 L(\vec{\f}) + \frac12 M_{\om}(\vec{\f}) -\frac12 P(\vec{\f}), \quad K(\vec{\f}) = 2 L(\vec{\f}) -\frac{N}2 P(\vec{\f}).
\end{align*}
The norms for $\C^2$-valued functions is defined by
\begin{align*}
	\norm{\vec{u}}_{L^{p}(I; Y)} ={}& \norm{u_1}_{L^{p}(I; Y)} + \norm{u_2}_{L^{p}(I; Y)}, \\
	\norm{\vec{u}(t)}_{Y} ={}& \norm{u_1(t)}_{Y} + \norm{u_2(t)}_{Y}, \\
	\norm{(\vec{u}, \vec{v})(t)}_{Y \times Z} ={}& \norm{(u_1, v_1)(t)}_{Y \times Z} + \norm{(u_2, v_2)(t)}_{Y \times Z}
\end{align*}
for any $p \ge 1$, all Banach space $Y$, $Z$ and any interval $I \subset \R$, $0 \in I$. 

\medskip

The rest of the paper is organized as follows:
In section \ref{sec:3}, we introduce bounded radial weighted functions necessary to approximate virial type identities \eqref{virial:1} and \eqref{virial:2}, and give key estimates generated by the approximation to show the main results.
Section \ref{sec:4} is devoted to the local well-posedness for \eqref{nlkg}.
In section \ref{sec:5}, we characterize the ground states by using variational argument. 
Some of the lemmas due to the variational argument will be proven in section \ref{vkl}.
We next turn to the proof of the main results in section \ref{sec:6}.
Finally appendix \ref{app:a} provides the proof of the uniform boundedness of solutions to \eqref{nlkg}.

\section{Preliminary} \label{sec:3}
Let us first introduce bounded radial weighted functions which play an important role to prove the main results. Let $\P \in C^{2}([0,\I))$ be a nonnegative function such that
\begin{align*}
	\P(r) =
	\begin{cases}
	N \quad \text{if}\; 0 \le r \le 1, \\
	0 \quad \text{if}\; r \ge 2,
	\end{cases}
	\quad \P'(r) \le 0 \quad \text{if}\; 1 \le r \le 2.
\end{align*}
Set
\begin{align*}
	\P_{\rho}(r) = \P\(\frac{r}{\rho}\), \quad \Psi_{\rho} (r) = \frac{1}{r^{N-1}}\int_0^{r} s^{N-1} \P_{\rho}(s) ds 
\end{align*}
for all $\rho>0$. The following properties are given by \cite{OT}:
\begin{lemma}[{\cite[Lemma 7]{OT}}]
For any $\rho>0$, It holds that
\begin{align}
	&{}\P_{\rho}(r) = N,\; \Psi_{\rho}(r) = r \; \text{if}\; 0 \le r \le \rho, \quad \Psi_{\rho}(r) \le 2^N \rho \; \text{if}\; r \ge \rho, \label{we:2} \\
	&{}\Psi'_{\rho}(r) + \frac{N-1}{r}\Psi_{\rho}(r) = \P_{\rho}(r), \quad r \ge 0, \label{we:3} \\
	&{}|\P_{\rho}^{(k)}(r)| \le \frac{C}{\rho^k}, \quad r \ge 0,\; k=0, 1, 2, \label{we:4} \\
	&{}\Psi'_{\rho}(r) \le 1, \quad r \ge0. \label{we:5}
\end{align}
\end{lemma}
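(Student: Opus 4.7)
The plan is to verify each of the four assertions by direct computation from the definitions of $\P_\rho$ and $\Psi_\rho$, exploiting only the properties imposed on $\P$ at the start of Section~\ref{sec:3}: $C^2$ regularity, $\P\equiv N$ on $[0,1]$, $\P\equiv 0$ on $[2,\infty)$, and $\P'\le 0$ on $[1,2]$. In particular, $\P$ is nonincreasing on all of $[0,\infty)$ with $0\le \P\le N$, a fact I will use crucially in establishing \eqref{we:5}.

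For \eqref{we:2}, when $0\le r\le\rho$ we have $\P_\rho(r)=\P(r/\rho)=N$ since $r/\rho\le 1$, and substituting this into the definition gives $\Psi_\rho(r)=r^{-(N-1)}\int_0^r s^{N-1}N\,ds=r$. For $r\ge\rho$, I would use $\supp\P_\rho\subset[0,2\rho]$ and $\P_\rho\le N$, then split into cases: when $\rho\le r\le 2\rho$ one has $\Psi_\rho(r)\le r\le 2\rho$, while for $r\ge 2\rho$ one has $\Psi_\rho(r)\le r^{-(N-1)}\int_0^{2\rho}Ns^{N-1}\,ds=(2\rho)^N/r^{N-1}\le 2\rho$; hence $\Psi_\rho(r)\le 2\rho\le 2^N\rho$ in either case.

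The identity \eqref{we:3} follows from writing $r^{N-1}\Psi_\rho(r)=\int_0^r s^{N-1}\P_\rho(s)\,ds$ and differentiating in $r$: the left side gives $(N-1)r^{N-2}\Psi_\rho(r)+r^{N-1}\Psi'_\rho(r)$, while the right side gives $r^{N-1}\P_\rho(r)$ by the fundamental theorem of calculus; dividing by $r^{N-1}$ yields \eqref{we:3}. For \eqref{we:4}, the chain rule gives $\P_\rho^{(k)}(r)=\rho^{-k}\P^{(k)}(r/\rho)$; since $\P\in C^2$ with $\P^{(k)}$ vanishing outside $[0,2]$ for $k=1,2$, and $\P$ itself bounded by $N$, each $\P^{(k)}$ is uniformly bounded on $[0,\infty)$ by a constant depending only on $\P$.

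The subtle step is \eqref{we:5}. My plan is to combine \eqref{we:3} with monotonicity of $\P_\rho$: since $\P_\rho(s)\ge \P_\rho(r)$ for $0\le s\le r$,
\[
\Psi_\rho(r)=\frac{1}{r^{N-1}}\int_0^r s^{N-1}\P_\rho(s)\,ds\ge\P_\rho(r)\cdot\frac{1}{r^{N-1}}\int_0^r s^{N-1}\,ds=\frac{r}{N}\P_\rho(r),
\]
so $\frac{N-1}{r}\Psi_\rho(r)\ge\frac{N-1}{N}\P_\rho(r)$. Substituting into \eqref{we:3} yields $\Psi'_\rho(r)\le\frac{1}{N}\P_\rho(r)\le 1$. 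The main obstacle is really just recognizing that the monotonicity of $\P$ is what forces the sharp constant $1$ on the right of \eqref{we:5}; without this observation, one would only obtain a weaker bound of the form $\Psi'_\rho\le C$ with some larger constant $C$.
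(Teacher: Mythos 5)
Your proof is correct and complete; the paper itself gives no argument for this lemma (it is quoted directly from \cite{OT}), and your direct verification — in particular deducing \eqref{we:5} from \eqref{we:3} together with the monotonicity of $\P_{\rho}$, which gives $\Psi_{\rho}(r)\ge \frac{r}{N}\P_{\rho}(r)$ and hence $\Psi'_{\rho}(r)\le \frac{1}{N}\P_{\rho}(r)\le 1$ — is exactly the standard computation behind the cited result.
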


Here is key estimates due to approximating virial type identities \eqref{virial:1} and \eqref{virial:2} to show the main results.
\begin{lemma} \label{lem:1}
Let $\vec{u} \in C([0,T_{\max}), \m{H}^1)$ be a radially symmetric maximal solution of \eqref{nlkg}. Then there exists $C_0>0$ such that
\begin{align}
	\begin{aligned}
	-\frac{d}{dt}I_{\rho}^1(\vec{u}(t), \pa_t \vec{u}(t)) \le{}& K(\vec{u}(t)) + \frac12 \re \int_{\{|x| \ge \rho\}}u_1(t)^2 \overline{u_2(t)} dx \\
	&{}+ \frac{C_0}{\rho^2} M(\vec{u}(t)),
	\end{aligned}
	\label{ineq:1} \\
	\begin{aligned}
	-\frac{d}{dt}I_{\rho}^2(\vec{u}(t), \pa_t \vec{u}(t)) \le{}& H(\vec{u}(t), \pa_t \vec{u}(t)) + \frac12 \re \int_{\{|x| \ge \rho\}}u_1(t)^2 \overline{u_2(t)} dx \\
	&{}+ \frac{C_0}{\rho^2} M(\vec{u}(t))
	\end{aligned}
	\label{ineq:2}
\end{align}
for any $t \in [0,T_{\max})$ and all $\rho>0$, where
\begin{align}
	I_{\rho}^1(\vec{u}, \pa_t \vec{u}) ={}& \sum_{j=1}^2 \frac1{j} \re \( 2\Psi_{\rho} \pa_r u_j + \P_{\rho} u_j, \pa_t u_j\)_{L^2}, \label{qu:1} \\
	I_{\rho}^2 (\vec{u}, \pa_t \vec{u}) ={}& \sum_{j=1}^2 \frac1{j} \re \( 2\Psi_{\rho} \pa_r u_j + \P_{\rho} u_j + \a u_j, \pa_t u_j\)_{L^2}.
	\label{qu:2}
\end{align}

\end{lemma}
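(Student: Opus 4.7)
\textbf{Proof proposal for Lemma \ref{lem:1}.}
The plan is to differentiate $I_\rho^1$ in time using \eqref{nlkg} and isolate the full-weight virial combination $K(\vec u)$ from the cutoff-induced errors. Writing $A_j := 2\Psi_\rho \pa_r u_j + \P_\rho u_j$ and applying the product rule, $\frac{d}{dt}I_\rho^1$ splits into a ``kinetic'' part $\sum_{j=1}^2 \frac{1}{j}\re(2\Psi_\rho\pa_r\pa_t u_j + \P_\rho\pa_t u_j, \pa_t u_j)_{L^2}$ and a ``dynamical'' part $\sum_{j=1}^2 \frac{1}{j}\re(A_j, \pa_t^2 u_j)_{L^2}$. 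I would first show that the kinetic part vanishes identically after IBP in radial coordinates, because \eqref{we:3} says exactly $\P_\rho = \dvg(\Psi_\rho \hat x)$; this is the structural reason for the particular form of the multiplier $A_j$.

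Next, I substitute \eqref{nlkg} in the dynamical part. For the Laplacian contribution, treating $X = \Psi_\rho \hat x$ as a radial vector-field multiplier, whose deformation tensor has vanishing transverse component on radial gradients and whose divergence is $\P_\rho$, IBP yields
\[
    \re(A_j, \D u_j)_{L^2} = -2 \int \Psi_\rho' |\n u_j|^2 dx + \frac12 \int \D \P_\rho\, |u_j|^2 dx.
\]
The mass-type terms $\re(A_j, u_j)_{L^2}$ vanish by \eqref{we:3}. For the nonlinear cross-terms, IBP applied to $\re(A_1, \overline{u_1} u_2)_{L^2}$ and to $\frac12 \re(A_2, u_1^2)_{L^2}$ generates matching $\int \Psi_\rho u_1^2 \pa_r \overline{u_2} dx$ residues that cancel exactly between the two equations thanks to the $1, 1/2$ mass-ratio weighting, leaving the net contribution $\frac12 \int \P_\rho\, \re(u_1^2 \overline{u_2}) dx$.

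Negating the result and using the splittings $\Psi_\rho' = 1 - (1-\Psi_\rho')$ and $\P_\rho = N - (N - \P_\rho)$ extracts the full-weight combination $K(\vec u) = 2 L(\vec u) - \frac{N}{2} P(\vec u)$. Property \eqref{we:5} makes the kinetic residue non-positive; property \eqref{we:4} dominates the $\D \P_\rho$-residue by $(C_0/\rho^2) M(\vec u)$; and the nonlinear residue is supported on $\{|x| \ge \rho\}$ since $N - \P_\rho$ vanishes on $\{|x| \le \rho\}$, producing the cross-term error on $\{|x|\ge\rho\}$ in the form stated. This proves \eqref{ineq:1}. For \eqref{ineq:2}, the additional piece $\sum_{j=1}^2 \frac{\a}{j} \re(u_j, \pa_t u_j)_{L^2}$ in $I_\rho^2$ carries no spatial cutoff: $\frac{d}{dt}$ of it, handled at the PDE level via \eqref{nlkg}, produces mass-energy terms that, using the arithmetic identity $\a + 2 = (N + 3\a)/2$ when $\a = 4 - N$, convert $K(\vec u)$ exactly into $H(\vec u, \pa_t \vec u)$; the cutoff errors are the same. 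The main obstacle I expect is the nonlinear IBP bookkeeping: the exact cancellation of the $\int \Psi_\rho u_1^2 \pa_r \overline{u_2} dx$ residues and the fact that the remaining piece reassembles into $\frac12 \P_\rho \re(u_1^2 \overline{u_2})$ both rely on the $1, 1/2$ weighting from the mass ratio in \eqref{nlkg}, and would fail for a generic quadratic system.
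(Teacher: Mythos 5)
Your proposal is correct and follows essentially the same route as the paper: differentiating the localized virial quantities, exploiting the radial identity $\Psi_\rho' + \tfrac{N-1}{r}\Psi_\rho = \P_\rho$ to cancel the kinetic and mass terms, recognizing that the $1,\tfrac12$ weighting makes the nonlinear residues assemble into the total derivative $\pa_r \re(u_1^2\overline{u_2})$, and then invoking \eqref{we:4}--\eqref{we:5} for the error terms; your intermediate identities (the Laplacian contribution $-2\int\Psi_\rho'|\n u_j|^2 + \tfrac12\int\D\P_\rho|u_j|^2$, the net nonlinear term $\tfrac12\int\P_\rho\re(u_1^2\overline{u_2})$, and the arithmetic $\tfrac{N}{2}+\tfrac{3\a}{2}=\a+2$ converting $K$ into $H$) all match the paper's displayed computations. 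No gap.
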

\begin{proof}
Since $\vec{u}$ is a radially symmetric with respect to $x$, we remark that
\begin{align*}
	&{}\D u_j = \pa_r^2 u_j + \frac{N-1}{r} \pa_r u_j, \quad \n \cdot \(\frac{x}{r} \) = \frac{N-1}{r}, \\
	&{}\pa_r u_j = \frac{x \cdot \n}{r} u_j, \quad |\pa_r u_j| = |\n u_j|
\end{align*}
for $j=1$, $2$. By using the integration by part and \eqref{nlkg}, we have
\begin{align*}
	&{}-\frac{d}{dt} \sum_{j =1}^2 \frac1{j} \re \( 2\Psi_{\rho} \pa_r u_j, \pa_t u_j\)_{L^2} \\
	={}& \int_{\R^N} \( \Psi'_\rho + \frac{N-1}{r} \Psi_{\rho} \) \(|\pa_t u_1|^2 + \frac12|\pa_t u_2|^2  \) dx \\
	&{} + \int_{\R^N} \( \Psi'_\rho - \frac{N-1}{r} \Psi_{\rho} \) \(|\n u_1|^2 + \frac12|\n u_2|^2  \) dx \\
	&{} + \int_{\R^N} \( \Psi'_\rho + \frac{N-1}{r} \Psi_{\rho} \) \( |u_1|^2 + \frac{\ka^2}2 |u_2|^2  \) dx \\
	&{} + \re \int_{\R^N} \( \Psi'_\rho + \frac{N-1}{r} \Psi_{\rho} \)u_1^2 \overline{u_2} dx.
\end{align*}
Arguing as in the above, one also obtains
\begin{align*}
	&{}-\frac{d}{dt} \sum_{j=1}^2 \frac1{j} \re \( \P_{\rho} u_j, \pa_t u_j\)_{L^2} \\
	={}& - \int_{\R^N} \P_{\rho} \(|\pa_t u_1|^2 + \frac12|\pa_t u_2|^2  \) dx - \frac12 \int_{\R^N} \D \P_{\rho} \(|u_1|^2 + \frac12|u_2|^2  \) dx \\
	&{} + \int_{\R^N} \P_{\rho} \(|\n u_1|^2 + \frac12|\n u_2|^2  \) dx + \int_{\R^N} \P_{\rho} \( |u_1|^2 + \frac{\ka^2}2 |u_2|^2  \) dx \\
	&{} - \frac32 \re \int_{\R^N} \P_{\rho} u_1^2 \overline{u_2} dx.
\end{align*}
Combining these identities with \eqref{we:2}, \eqref{we:3}, \eqref{we:4} and \eqref{we:5}, we have \eqref{ineq:1}. Moreover, since a calculation shows
\begin{align*}
	&{}-\frac{d}{dt} \sum_{j=1}^2 \frac1{j} \re (u_j, \pa_t u_j)_{L^2} \\
	={}& - M(\pa_t \vec{u}) + L(\vec{u}) + \( \Lebn{u_1}{2}^2 + \frac{\ka^2}{2} \Lebn{u_2}{2}^2 \) - \frac32 P(\vec{u}), 
\end{align*}
one reaches to \eqref{ineq:2}.
\end{proof}

\section{Local well-posedness in the energy space} \label{sec:4}

In this section, by arguing as in \cite{BW1}, we shall prove the local well-posedness in the energy space $\m{H}^1 \times \m{H}^0$  for the Cauchy problem  
\begin{align}
	\begin{cases}
	\pa^2_t u_1 -  \D u_1 + u_1 = \overline{u_1} u_2,\; (t, x) \in \R_{+} \times \R^N \\
	\pa^2_t u_2 -  \D u_2 + \ka^2 u_2 = u_1^2,\; (t, x) \in \R_{+} \times \R^N \\
	\vec{u}(0) = \vec{u}^0,\; \pa_t \vec{u}(0) = \vec{u}^1,\; x \in \R^N,
	\end{cases}
	\label{cau:1}
\end{align}
where $\vec{u} = (u_1, u_2)^t$ is a $\C^2$-valued unknown function, $\vec{u}^0 = (u_1^0, u_2^0)^t$, $\vec{u}^1 = (u_1^1, u_2^1)^t$ and $\ka >0$.
We here define the following notations:
\begin{align*}
	K_1(t) ={}& \frac{\sin (t\sqrt{1-\D})}{\sqrt{1-\D}}, \quad K_2(t) = \frac{\sin (t\sqrt{\ka^2-\D})}{\sqrt{\ka^2-\D}}, \\
	U(t) ={}& 
	\begin{pmatrix}
	K_1(t) & 0 \\
	0 &  K_2(t)
	\end{pmatrix}
	, \quad F(\vec{u}) =
	\begin{pmatrix}
	\overline{u_1} u_2 \\
	u_1^2
	\end{pmatrix}
	.
\end{align*}
We give the definition of solutions to \eqref{cau:1}. 
\begin{definition}[Solution]\label{def:sol}
We say a function $\vec{u}(t)$ is a solution to \eqref{cau:1} on an interval $I \subset \R$, $I \ni 0$
if $\vec{u} \in C(I, \m{H}^1) \cap C^1(I, \m{H}^0)$ and satisfies
\begin{align*}
	\vec{u}(t) = \dot{U}(t)\vec{u}^0 + U(t)\vec{u}^1 + \int_0^t U(t-\ta) F(\vec{u}(\ta)) d\ta
\end{align*}
in $\m{H}^1$ for any $t \in I$.
We call $I$ is a maximal interval of $\vec{u}$ if $\vec{u}(t)$ cannot be
extended to any interval strictly larger than $I$.
We denote the maximal interval of $\vec{u}$ by $I_{\max} = I_{\max}(\vec{u}) = (T_{\min}, T_{\max})$.
\end{definition}

We have the following:

\begin{proposition}[Local well-posedness for \eqref{cau:1} in $\m{H}^1 \times \m{H}^0$] \label{lwp:1}
Assume that $\ka >0$. Let $2 \le N \le 5$. 
Then the Cauchy problem \eqref{cau:1} is locally well-posed in $\m{H}^1 \times \m{H}^0$. Namely, for any $(\vec{u}^0, \vec{u}^1) \in \m{H}^1 \times \m{H}^0$, there exists a $T>0$ and an unique solution $\vec{u} \in C([0, T], \m{H}^1)$ to \eqref{cau:1} satisfying $\pa_t \vec{u} \in C([0,T], \m{H}^0)$. Furthermore, the map $(\vec{u}^0,  \vec{u}^1) \mapsto u$ is continuous in the following sense:
For any compact $I' \subset [0, T]$, there exists a neighborhood $V$ of $(\vec{u}^0, \vec{u}^1)$ in $\m{H}^1 \times \m{H}^0$ such that the map 
is Lipschitz continuous from $V$ to $C(I', \m{H}^1)$.
\end{proposition}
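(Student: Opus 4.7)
The plan is a standard Banach fixed-point argument on the Duhamel formulation of Definition \ref{def:sol}. The essential ingredients are (i) the energy/dispersive bounds $\norm{U(t)\vec f}_{\m{H}^1} \le C\norm{\vec f}_{\m{H}^0}$ and $\norm{\dot U(t)\vec f}_{\m{H}^1} \le C\norm{\vec f}_{\m{H}^1}$ for the Klein-Gordon propagators (immediate from the spectral representation of $K_1,K_2$), (ii) inhomogeneous Strichartz estimates for the Klein-Gordon equation, and (iii) a bilinear product estimate for the nonlinearity $F(\vec u) = (\overline{u_1}u_2, u_1^2)^t$.

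First I would fix a Klein-Gordon admissible Strichartz pair $(q,r)$, with $r$ large enough to accommodate the dimension, and work in the complete metric space
\[
	X_T^R = \bigl\{ \vec u \in C([0,T],\m{H}^1) \cap C^1([0,T],\m{H}^0)\cap L^q(0,T;(L^r)^2)\; :\; \norm{\vec u}_{X_T}\le R \bigr\},
\]
equipped with the natural norm
\[
	\norm{\vec u}_{X_T} = \norm{\vec u}_{L^\I(0,T;\m{H}^1)} + \norm{\pa_t \vec u}_{L^\I(0,T;\m{H}^0)} + \norm{\vec u}_{L^q(0,T;L^r)}.
\]
The linear Strichartz and energy bounds then give
\[
	\norm{\Phi(\vec u)}_{X_T} \le C_1\bigl(\norm{\vec u^0}_{\m{H}^1}+\norm{\vec u^1}_{\m{H}^0}\bigr) + C_2 \norm{F(\vec u)}_{L^{\tilde q'}(0,T;L^{\tilde r'})}
\]
for the solution map $\Phi(\vec u)(t) = \dot U(t)\vec u^0 + U(t)\vec u^1 + \int_0^t U(t-\ta) F(\vec u(\ta))\,d\ta$ and a suitably chosen dual admissible pair $(\tilde q,\tilde r)$.

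Since $F$ is bilinear, H\"older's inequality combined with the Sobolev embedding $H^1(\R^N)\hookrightarrow L^p$ (valid for $p\in[2,2N/(N-2))$ if $N\ge3$ and $p\in[2,\I)$ if $N=2$) yields
\[
	\norm{F(\vec u)-F(\vec v)}_{L^{\tilde q'}(0,T;L^{\tilde r'})} \le C T^{\theta}\bigl(\norm{\vec u}_{X_T}+\norm{\vec v}_{X_T}\bigr)\norm{\vec u -\vec v}_{X_T}
\]
for some $\theta>0$. For $N=2,3,4$ the embedding $H^1\hookrightarrow L^4$ already gives $\norm{F(\vec u)}_{L^2}\le C\norm{\vec u}_{\m{H}^1}^2$, so one may take $(q,r)=(\I,2)$ and the analysis collapses to the classical energy method of \cite{BW1}; the genuinely delicate case is $N=5$, where $H^1\hookrightarrow L^{10/3}$ only and the missing integrability must be absorbed into the space-time Strichartz norm.

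Together these estimates show that $\Phi$ is a contraction on $X_T^R$ once $R\sim \norm{\vec u^0}_{\m{H}^1}+\norm{\vec u^1}_{\m{H}^0}$ and $T$ is sufficiently small depending only on this size. Banach's fixed point theorem then produces the unique local solution $\vec u$, and the same computation applied to the difference of two solutions with data close to $(\vec u^0,\vec u^1)$ yields the Lipschitz continuity of the data-to-solution map on any compact subinterval of $[0,T]$. I expect the main obstacle to be the verification of the bilinear estimate in $N=5$, i.e.\ choosing admissible pairs $(q,r)$ and $(\tilde q,\tilde r)$ for which H\"older and Sobolev close the quadratic interaction; for $N\le 4$ the proof essentially reproduces the classical scheme of Brenner-Wahl.
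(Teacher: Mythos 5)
Your plan follows the same route as the paper: a Banach fixed-point argument on the Duhamel formula, run with the plain energy estimate plus $H^1\hookrightarrow L^4$ when $2\le N\le 4$ (so that $\norm{F(\vec u)}_{L^2}\le C\norm{\vec u}_{\m H^1}^2$ and one can take $(q,r)=(\I,2)$), and with Klein--Gordon Strichartz estimates when $N=5$. Two remarks on where your sketch and the actual proof diverge. First, the single step you defer --- ``choosing admissible pairs for which H\"older and Sobolev close the quadratic interaction in $N=5$'' --- is precisely the content of the paper's argument, so as written the proposal does not yet prove the hardest case. The paper's choice is the triple $(\rho,\ga,\s)=(12/5,6,1/4)$ from Proposition \ref{str:0}, i.e.\ the space $L^{6}(0,T;H^{3/4}_{12/5})$; the bilinear estimate then closes because with $\frac1{\l_2}=\frac1{\rho'}-\frac{\s}{5}$ and $\frac1{\a_0}=\frac1{\rho}-\frac{1-\s}{5}$ one has $\frac1{\l_2}=\frac2{\a_0}$, whence $\norm{uv}_{H^{\s}_{\rho'}}\le C\Lebn{uv}{\l_2}\le C\norm{u}_{H^{1-\s}_{\rho}}\norm{v}_{H^{1-\s}_{\rho}}$, and H\"older in time ($\ga'=6/5$ versus $\ga=6$) produces the factor $T^{1/2}$. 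You should verify some such numerology explicitly rather than assert its existence. Second, you propose to contract in the full norm $\norm{\cdot}_{X_T}$, whereas the paper uses Kato's device of contracting only in the weaker metric $d_{T,M}(\vec u,\vec v)=\norm{\vec u-\vec v}_{L^{\ga}(0,T;L^{\rho})}$ on the ball $X_{T,M}$ (which is complete for this metric by weak-$*$ compactness). For $N=5$ this matters: estimating $F(\vec u)-F(\vec v)$ in the derivative norm $H^{3/4}_{12/5}$ would require a fractional Leibniz rule for the difference, while the weak-metric contraction only needs the difference measured in $L^{\rho}$ (the paper lands it in $H^{2\s-1}_{\rho'}$, a \emph{negative}-order space reached by a pure Sobolev embedding). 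Your full-norm contraction is salvageable because $F$ is polynomial, but it is genuinely more work than the paper's scheme, and you have not supplied the product estimates it would require.
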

In $N=5$, the key of the proof of Proposition \ref{lwp:1} is the Strichartz estimate as follows:

\begin{proposition}[\cite{P, BW1}] \label{str:0}
Denote $V(t) = e^{it(m^2 -\D)^{1/2}}$ for any $m>0$. Let $I$ be an any interval in $\R$.
Assume $r$, $q \in [2, \I)$ if $N=2$, $3$ and $r$, $q \in [2, 2(N-1)/(N-3))$ if $N \ge 4$. Let $\s(r)$, $\ga(r)$ and $\s(q)$, $\ga(q)$ satisfy 
\begin{align*}
	\frac{2\s(r)}{N+1} = \frac{2}{\ga(r)(N-1)} = \frac12 - \frac1{r}. 
\end{align*}
Then the following estimates hold:
\begin{align*}
	\norm{V(t)\v}_{L^{\ga(r)}\(I; H^{s-\s(r)}_{r}\)} \le{}& C \Sobn{\v}{s}, \\ 
	\norm{\int_{0}^t V(t-\ta)f(\ta)\ d\ta}_{L^{\ga(q)}\(I; H^{s-\s(q)}_{q}\)} \le{}& C \norm{f}_{L^{\ga(r)'}\(I; H^{s+\s(r)}_{r'}\)}, 
\end{align*}
where $p'$ is defined by $\frac1{p} + \frac1{p'} =1$ for any $p \ge 2$.
\end{proposition}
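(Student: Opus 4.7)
Proposition \ref{str:0} is the standard non-endpoint mixed-norm Strichartz inequality for the Klein--Gordon propagator $V(t) = e^{it(m^2-\D)^{1/2}}$. The plan is to derive it from a single pointwise dispersive decay bound through the $TT^*$ method (using Hardy--Littlewood--Sobolev in the time variable for the homogeneous estimate) and then pass to the retarded inhomogeneous estimate at distinct exponents through the Christ--Kiselev lemma.

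First I would establish the basic dispersive estimate. Writing $V(t)\v$ as an oscillatory integral with phase $t\sqrt{m^2+|\xi|^2} + x\cdot\xi$ and inserting a Littlewood--Paley cutoff $P_j$ at frequency $|\xi|\sim 2^j$, stationary phase on the characteristic hyperboloid $\{\ta = \sqrt{m^2+|\xi|^2}\}$ yields
\[
\norm{V(t) P_j \v}_{L^{\I}(\R^N)} \le C\, 2^{j\frac{N+1}{2}}(1+|t|)^{-\frac{N-1}{2}}\norm{P_j \v}_{L^1}.
\]
The decay rate $-(N-1)/2$ comes from the $N-1$ non-vanishing principal curvatures of the hyperboloid (as for the wave equation at high frequencies), while the prefactor $2^{j(N+1)/2}$ encodes the symbol normalization. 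Riesz--Thorin interpolation against the trivial $L^2$-bound then delivers an $L^{r'}\to L^r$ bound of temporal decay $(N-1)(\tfrac12-\tfrac1r)$ and derivative cost $\s(r) = \tfrac{N+1}{2}(\tfrac12-\tfrac1r)$ at each dyadic level.

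By the $TT^*$ principle, the homogeneous estimate in the statement is equivalent to the self-dual bound
\[
\bigg\|\int_\R V(t-\ta) f(\ta)\, d\ta\bigg\|_{L^{\ga(r)}(\R;\,L^r)} \le C\,\norm{f}_{L^{\ga(r)'}(\R;\, H^{2\s(r)}_{r'})}.
\]
Substituting the dispersive bound reduces this to the one-dimensional Hardy--Littlewood--Sobolev inequality with kernel $|t-\ta|^{-(N-1)(\tfrac12-\tfrac1r)}$; the admissibility condition $\tfrac{2}{\ga(r)(N-1)} = \tfrac12 - \tfrac1r$ is exactly what HLS requires. Littlewood--Paley summation over the dyadic shells then upgrades the frequency-localized estimate to the full bound on $H^s$, and the restriction $r < 2(N-1)/(N-3)$ for $N \ge 4$ keeps $\ga(r) > 2$, so one stays strictly away from the Keel--Tao endpoint.

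For the inhomogeneous estimate at the possibly distinct pair $(q, \ga(q))$, I would combine the homogeneous estimate for $(q, \ga(q))$ with the $TT^*$-dual inequality for $(r, \ga(r))$, producing the \emph{untruncated} time convolution, and then invoke the Christ--Kiselev lemma to restore the retarded restriction $\ta < t$; this step is legal precisely because $\ga(q) > \ga(r)'$ off the endpoint. The main technical obstacle is the uniform handling of low and high frequencies, because the Klein--Gordon propagator decays like a Schr\"odinger flow for $|\xi|\lesssim 1$ but only like the wave propagator for $|\xi|\gtrsim 1$; a clean Littlewood--Paley decomposition with separate arguments in the two regimes, followed by summation, reconciles both behaviours under a single Sobolev-index formulation and completes the proof.
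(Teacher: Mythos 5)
The paper does not prove Proposition \ref{str:0} at all: it is quoted verbatim from Pecher \cite{P} and Wang \cite{BW1}, so there is no internal argument to compare yours against. Your outline is the standard route taken in those references and in the subsequent literature (frequency-localized dispersive estimate from stationary phase on the hyperboloid, $TT^*$ plus one-dimensional Hardy--Littlewood--Sobolev for the diagonal homogeneous bound, duality composition and Christ--Kiselev for the retarded estimate at a different admissible pair), and the bookkeeping you do is consistent with the exponent relations in the statement: $\s(r)=\tfrac{N+1}{2}(\tfrac12-\tfrac1r)$ is the correct derivative cost, the admissibility relation is exactly the HLS scaling condition, and $r<2(N-1)/(N-3)$ is what keeps $\ga(r)>2$ so that both HLS and Christ--Kiselev apply. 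So the strategy is sound and would reproduce the cited result.

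One intermediate estimate is stated imprecisely and, as written, is false: the bound
\[
\norm{V(t)P_j\v}_{L^{\I}}\le C\,2^{j\frac{N+1}{2}}(1+|t|)^{-\frac{N-1}{2}}\norm{P_j\v}_{L^1}
\]
cannot hold uniformly for small times, since at $t=0$ the sharp constant in $\norm{P_j\v}_{L^\I}\lesssim C_j\norm{P_j\v}_{L^1}$ is of size $2^{jN}$, which exceeds $2^{j(N+1)/2}$ for $N\ge2$ and $j$ large. The correct frequency-localized form is $2^{jN}\bigl(1+2^{j}|t|\bigr)^{-\frac{N-1}{2}}$ (wave-like regime $2^j\gtrsim1$), with the Schr\"odinger-like $|t|^{-N/2}$ decay available for $2^j\lesssim1$; after the HLS step and Littlewood--Paley summation this still yields the loss $\s(r)=\tfrac{N+1}{2}(\tfrac12-\tfrac1r)$, so the final statement is unaffected, but a careful write-up must either carry the $2^j$-dependent time scale through the convolution estimate or treat $|t|\le1$ separately by Sobolev embedding, as Pecher and Wang do. This is a repairable technical slip rather than a structural gap.
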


\begin{proof}[Proof of Proposition \ref{lwp:1}]
Let us introduce the complete metric space
\begin{align*}
X_{T, M} ={}& \left\{ \left. \vec{u} \in \( L^{\ga(\rho)}(0, T; H^{1-\s(\rho)}_{\rho})\)^2\ \right| \norm{\vec{u}}_{L^{\ga(\rho)}(0, T; H^{1-\s(\rho)}_{\rho})} \le M \right\}
\end{align*}
equipped with the distance function
\begin{align*}
	d_{T, M}(\vec{u}, \vec{v}) ={}& \norm{\vec{u} - \vec{v}}_{L^{\ga(\rho)}(0, T; L^{\rho})}, 
\end{align*}
where $(\rho, \ga, \s) = (2, \I, 0)$ if $N=2$, $3$, $4$, $(\rho, \ga, \s) = (12/5, 6, 1/4)$ if $N=5$.
Here the constant $M$ will be chosen later. Set
\[
	\P(\vec{u}) = \dot{U}(t)\vec{u}^0 + U(t)\vec{u}^1 + \int_0^t U(t-\ta) F(\vec{u}(\ta)) d\ta.
\]
We shall prove that $\P$ is a contraction map in $X_{T, M}$. Let us first show that $\P$ maps from $X_{T, M}$ to itself.
When $N=2$, $3$, $4$, we estimate
\begin{align*}
	\TLebn{\P(\vec{u})}{\I}{2} \le{}& \TLebn{\dot{U}(t)\vec{u}^0}{\I}{2} + \TLebn{U(t)\vec{u}^1}{\I}{2} \\
	&{}+ \TLebn{\int_0^t U(t-\ta) F(\vec{u}(\ta)) d\ta}{\I}{2} \\
	\le{}& \Lebn{\vec{u}^0}{2} + \Lebn{(1-\D)^{-\frac12} \vec{u}^1}{2} + \TLebn{(1-\D)^{-\frac12} F(\vec{u})}{1}{2} \\
	\le{}& \Sobn{\vec{u}^0}{1} + \Lebn{\vec{u}^1}{2} + \TSobn{F(\vec{u})}{1}{-1}.
\end{align*}
By Sobolev embedding, the last term can be calculated as follows:
\begin{align*}
	\TSobn{F(\vec{u})}{1}{-1} \le{}& T \TLebn{F(\vec{u})}{\I}{2} \\
	\le{}& T\( \TLebn{u_1}{\I}{4}\TLebn{u_2}{\I}{4} + \TLebn{u_1}{\I}{4}^2 \) \\
	\le{}& CT \TSobn{\vec{u}}{\I}{1}^2.
\end{align*}
Hence we see that
\begin{align*}
	\TLebn{\P(\vec{u})}{\I}{2} \le \Sobn{\vec{u}^0}{1} + \Lebn{\vec{u}^1}{2} + CT M^2.
\end{align*}
In similar way, it follows from Sobolev embedding that 
\begin{align*}
	\TLebn{\n \P(\vec{u})}{\I}{2} \le{}& \TLebn{\dot{U}(t) \n \vec{u}^0}{\I}{2} + \TLebn{U(t) \n \vec{u}^1}{\I}{2} \\
	&{}+ \TLebn{\int_0^t U(t-\ta) \n F(\vec{u}(\ta)) d\ta}{\I}{2} \\
	\le{}& \Sobn{\vec{u}^0}{1} + \Lebn{\vec{u}^1}{2} + \TLebn{F(\vec{u})}{1}{2} \\
	\le{}& \Sobn{\vec{u}^0}{1} + \Lebn{\vec{u}^1}{2} + CT M^2.
\end{align*}
Combining these above, we see that
\begin{align*}
	\TSobn{\P(\vec{u})}{\I}{1} \le 2\( \Sobn{\vec{u}^0}{1} + \Lebn{\vec{u}^1}{2}\) + CT M^2. 
\end{align*}
Putting $M = 4\( \Sobn{\vec{u}^0}{1} + \Lebn{\vec{u}^1}{2}\)$, $\TSobn{\P(\vec{u})}{\I}{1} \le M$ holds as long as $T = T \( \Sobn{\vec{u}^0}{1} + \Lebn{\vec{u}^1}{2} \)$ satisfies
\begin{align}
	CT M^2 \le \frac{M}{2}. \label{cau:5}
\end{align}
Thus $\P(\vec{u}) \in X_{T, M}$. We shall prove $\P$ is a contraction map in $X_{T, M}$.
Note that
\begin{align}
	|F(\vec{u}) - F(\vec{v})| \le C\(|u_1| + |u_2| + |v_1| \)\(|u_1 -v_1| + |u_2 -v_2| \) \label{cau:6}
\end{align}
for any $\vec{u}$, $\vec{v} \in X_{T,M}$. 
We calculate
\begin{align*}
	\TLebn{\P(\vec{u}) - \P(\vec{u})}{\I}{2} \le{}& \TLebn{\int_0^t U(t-\ta) \( F(\vec{u}(\ta)) - F(\vec{v}(\ta)) \) d\ta}{\I}{2} \\
	\le{}& \TSobn{F(\vec{u}) - F(\vec{v})}{1}{-1+\eta},
\end{align*}
where $\eta = (4-N)/4$. Let
\[
	\frac1{\l_1} = \frac12 - \frac{-1+\eta}{N} = \frac12 + \frac14.
\]
Combining Sobolev embedding with \eqref{cau:6}, one obtains 
\begin{align*}
	&{}\Sobn{F(\vec{u}) - F(\vec{v})}{-1+\eta} \\
	\le{}& C\Lebn{F(\vec{u}) - F(\vec{v})}{\l_1} \\
	\le{}& C\(\Lebn{u_1}{4} + \Lebn{u_2}{4} + \Lebn{v_1}{4} \)\(\Lebn{u_1-v_1}{2} + \Lebn{u_2 - v_2}{2} \) \\
	\le{}& C\(\Sobn{u_1}{1} + \Sobn{u_2}{1} + \Sobn{v_1}{1} \)\(\Lebn{u_1-v_1}{2} + \Lebn{u_2 - v_2}{2} \),  
\end{align*}
which yields
\begin{align*}
	\TLebn{\P(\vec{u}) - \P(\vec{u})}{\I}{2} \le CT M \TLebn{\vec{u}- \vec{v}}{\I}{2}. 
\end{align*}
Hence $\P$ is a contraction map in $X_{T,M}$ whenever $T = T \( \Sobn{\vec{u}^0}{1} + \Lebn{\vec{u}^1}{2} \)$ satisfies 
\begin{align}
	CTM \le 1/2. \label{cau:7}
\end{align}
Therefore, taking $T = T_0 \( \Sobn{\vec{u}^0}{1} + \Lebn{\vec{u}^1}{2} \)$ satisfying \eqref{cau:5} and \eqref{cau:7}, we have a solution to \eqref{cau:1} in $X_{T_0, M}$.

Let us move on to the case $N=5$. 
We deduce from Proposition \ref{str:0} that 
\begin{align*}
	\TFLebn{\P(\vec{u})}{\ga}{1-\s}{\rho} \le{}& \TFLebn{\dot{U}(t)\vec{u}^0}{\ga}{1-\s}{\rho} + \TFLebn{U(t)\vec{u}^1}{\ga}{1-\s}{\rho} \\
	&{}+ \TFLebn{\int_0^t U(t-\ta) F(\vec{u}(\ta)) d\ta}{\ga}{1-\s}{\rho} \\
	\le{}& C \Sobn{\vec{u}^0}{1} + C\Sobn{(1-\D)^{-\frac12} \vec{u}^1}{1} \\
	&{}+ C \TFLebn{(1-\D)^{-\frac12} F(\vec{u})}{\ga'}{1+\s}{\rho'} \\
	\le{}& C\( \Sobn{\vec{u}^0}{1} + \Lebn{\vec{u}^1}{2} \) + C \TFLebn{F(\vec{u})}{\ga'}{\s}{\rho'}.
\end{align*}
Set 
\[
	\frac{1}{\l_2} = \frac1{\rho'} - \frac{\s}{5}, \quad \frac1{\a_0} = \frac1{\rho} - \frac{1-\s}{5}.
\]
Because of $1/\l_2 = 2/\a_0$, one has
\begin{align*}
	\norm{uv}_{H^{\s, \rho'}} \le{}& C\Lebn{uv}{\l_2} \le C\norm{u}_{H^{1-\s, \rho}} \norm{v}_{H^{1-\s, \rho}} \\
\end{align*}
This implies that
\begin{align}
	\begin{aligned}
	\TFLebn{F(\vec{u})}{\ga'}{\s}{\rho'} ={}& \TFLebn{u_1 u_2}{\ga'}{\s}{\rho'} + \TFLebn{u_1^2}{\ga'}{\s}{\rho'} \\
	\le{}& CT^{1/2} \( \TFLebn{u_1}{\ga}{1-\s}{\rho} + \TFLebn{u_2}{\ga}{1-\s}{\rho} \)^2
	\end{aligned}
	\label{cau:11}
\end{align}
from which we conclude that
\begin{align}
	\TFLebn{\P(\vec{u})}{\ga}{1-\s}{\rho} \le C\( \Sobn{\vec{u}^0}{1} + \Lebn{\vec{u}^1}{2} \) +CT^{1/2} M^2. \label{cau:4}
\end{align}
Putting $M = 2C\( \Sobn{\vec{u}^0}{1} + \Lebn{\vec{u}^1}{2}\)$, $\TSobn{\P(\vec{u})}{\I}{1} \le M$ holds as long as $T = T \( \Sobn{\vec{u}^0}{1} + \Lebn{\vec{u}^1}{2} \)$ satisfies
\begin{align}
	CT^{1/2} M^2 \le \frac{M}{2}. \label{cau:8}
\end{align}
Thus $\P(\vec{u}) \in X_{T, M}$. We shall prove that $\P$ is a contraction map in $X_{T, M}$.
It come from Proposition \ref{str:0} that
\begin{align*}
	&{}\TLebn{\P(\vec{u}) - \P(\vec{v})}{\ga}{\rho} \\
	\le{}& \TLebn{\int_0^t U(t-\ta) \( F(\vec{u}(\ta)) - F(\vec{v}(\ta)) \) d\ta}{\ga}{\rho} \\
	\le{}& C \TFLebn{F(\vec{u}) - F(\vec{v})}{\ga'}{2\s-1}{\rho'}.
\end{align*}
Put 
\begin{align*}
	\frac{1}{\l_3} ={}& \frac1{\rho'} - \frac{2\s-1}{5}, \quad \frac1{a_0} = \frac1{\rho} - \frac{1-\s}{5}, \quad \frac1{\a_1} = \frac1{\rho}, \\
	\frac1{\l_3} ={}& \frac1{\a_0} + \frac1{\a_1}.
\end{align*}
Hence we see from \eqref{cau:6} that
\begin{align*}
	&{}\TFLebn{F(\vec{u}) - F(\vec{v})}{\ga'}{2\s-1}{\rho'} \\
	\le{}& C \TLebn{F(\vec{u}) - F(\vec{v})}{\ga'}{\l_3} \\
	\le{}& CT^{1/2}\(\TFLebn{u_1}{\ga}{1-\s}{\rho} + \TFLebn{u_2}{\ga}{1-\s}{\rho} + \TFLebn{v_1}{\ga}{1-\s}{\rho} \) \\
	&{} \times \TLebn{\vec{u} -\vec{v}}{\ga}{\rho},
\end{align*}
which implies
\begin{align*}
	\TLebn{\P(\vec{u}) - \P(\vec{v})}{\ga}{\rho} \le CT^{1/2} M \TLebn{\vec{u} -\vec{v}}{\ga}{\rho}
\end{align*}
Hence $\P$ is a contraction map in $X_{T,M}$ as long as $T = T \( \Sobn{\vec{u}^0}{1} + \Lebn{\vec{u}^1}{2} \)$ satisfies 
\begin{align}
	CT^{1/2} M \le 1/2. \label{cau:10}
\end{align}
Therefore, taking $T = T \( \Sobn{\vec{u}^0}{1} + \Lebn{\vec{u}^1}{2} \)$ satisfying \eqref{cau:8} and \eqref{cau:10}, we have a solution to \eqref{cau:1} in $X_{T, M}$. Computing as in \eqref{cau:4}, we also see from Proposition \ref{str:0} that $\vec{u} \in \( L^{\I}(0,T; H^1)\)^2$. Finally we show $\pa_t \vec{u} \in L^{\I}(0, T; L^2)$. Since we have
\begin{align*}
	\pa_t \vec{u} = \ddot{U}(t) \vec{u}^0 + \dot{U}(t)\vec{u}^1 + \int_0^t \dot{U}(t-\ta) F(\vec{u})(\ta) d\ta,
\end{align*}
it is established that
\begin{align*}
	\TLebn{\pa_t \vec{u}}{\I}{2} \le{}& C\( \Sobn{\vec{u}^0}{1} + \Lebn{\vec{u}^0}{2} \) \\
	&{} + \TLebn{\int_0^t \dot{U}(t-\ta) F(\vec{u})(\ta) d\ta}{\I}{2}.
\end{align*}
When $2 \le N \le 4$, we easily see from Sobolev embedding that
\begin{align*}
	\TLebn{\int_0^t \dot{U}(t-\ta) F(\vec{u})(\ta) d\ta}{\I}{2} \le{}& \TLebn{F(\vec{u})}{1}{2} \\
	\le{}& T \TSobn{\vec{u}}{\I}{1}.
\end{align*}
In $N=5$, arguing as in \eqref{cau:11}, making a use of Proposition \ref{str:0}, one has 
\begin{align*}
	\TLebn{\int_0^t \dot{U}(t-\ta) F(\vec{u})(\ta) d\ta}{\I}{2} \le{}& C \TFLebn{F(\vec{u})}{\ga'}{\s}{\rho'} \\
	\le{}& CT^{1/2} \TFLebn{\vec{u}}{\ga}{1-\s}{\rho}^2.
\end{align*}
Therefore it is concluded that $\pa_t \vec{u} \in L^{\I}(0, T; L^2)$. The remainder of the proof is standard, so we omit the proof. 
\end{proof}

\section{Characterization of the ground states} \label{sec:5}


In this section, we will prove the existence of the ground states characterized by the solution to two constrained minimization problems
\begin{align}
d_{\om}^1 := \inf \left\{ J_{\om}(\vec{\phi})\ \left|\ 
	\vec{\phi} \in (H^1(\R^{N}, \R))^2 \setminus \vec{0},\; K(\vec{\phi}) = 0
	\right.  \right\} \label{mini:1}
\end{align}
in $N=4$, $5$, and
\begin{align}
d_{\om}^0 := \inf \left\{ J_{\om}(\vec{\phi})\ \left|\ 
	\vec{\phi} \in (H^1(\R^{N}, \R))^2 \setminus \vec{0},\; K_{\om}^0(\vec{\phi}) = 0
	\right.  \right\} \label{mini:2}
\end{align}
in $N=2$, $3$, where
\begin{align*}
	K_{\om}^0(\vec{u}) 
	={}& 2\pa_{\l}J_{\om}(\l^{2} \vec{u}(\l \cdot) )|_{\l=1} \\
	={}& \a M_{\om}(\vec{\f}) + (\a +2)(L(\vec{\f}) - P(\vec{\f})).
\end{align*}
We further set 
\begin{align*}
	\m{K} ={}& \{\vec{\f} \in (H^1(\R^N, \R))^2 \setminus \vec{0}\ |\ K(\vec{\f}) =0 \}, \\
	\m{K}_{\om}^0 ={}& \{\vec{\f} \in (H^1(\R^N, \R))^2 \setminus \vec{0}\ |\ K_{\om}^0(\vec{\f}) =0 \}, \\
\m{M}_{\om} ={}& \left\{ \vec{\f} \in \( H^1_{\rm{rad}}(\R^N, \R) \)^2 \setminus \vec{0}\ \left|\ 
	\begin{array}{c}
	J_{\om}( \vec{\phi}) = d_{\om}^1, \; \vec{\f} \in \m{K}\ (N=4, 5) \\ 
	J_{\om}( \vec{\phi}) = d_{\om}^0, \; \vec{\f} \in \m{K}_{\om}^0 \ (N=2, 3)
	\end{array}
	\right. \right\}.
\end{align*}


The following holds:

\begin{proposition} \label{gs:t0}
Let $2 \le N \le 5$. Then $\m{G}_{\om} = \m{M}_{\om}$. 
\end{proposition}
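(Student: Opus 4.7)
The plan is to prove the identity
\[
d_\omega = \inf_{\vec{\psi} \in \mathcal{C}_\omega} J_\omega(\vec{\psi})
\]
together with coincidence of the attaining sets, where $d_\omega$ stands for $d_\omega^1$ when $N = 4, 5$ and $d_\omega^0$ when $N = 2, 3$. This is carried out via two mutually reverse inclusions and a Lagrange multiplier argument.

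First I would verify the ``easy'' direction $\mathcal{G}_\omega \subseteq \mathcal{M}_\omega$ by showing that every radial critical point of $J_\omega$ satisfies both Pohozaev-type constraints $K(\vec{\phi}) = 0$ and $K^0_\omega(\vec{\phi}) = 0$. Since by construction
\[
K(\vec{\phi}) = 2\partial_\lambda J_\omega(\lambda^{N/2}\vec{\phi}(\lambda\cdot))\big|_{\lambda=1}, \qquad K^0_\omega(\vec{\phi}) = 2\partial_\lambda J_\omega(\lambda^{2}\vec{\phi}(\lambda\cdot))\big|_{\lambda=1},
\]
testing $J_\omega'(\vec{\phi})\vec{v} = 0$ against the admissible scaling directions $\vec{v} = \partial_\lambda(\lambda^{N/2}\vec{\phi}(\lambda\cdot))|_{\lambda=1}$ and $\vec{v} = \partial_\lambda(\lambda^2\vec{\phi}(\lambda\cdot))|_{\lambda=1}$ immediately yields the two identities. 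Hence $\mathcal{C}_\omega$ lies in the admissible class of the constrained problem, giving $\inf_{\mathcal{C}_\omega} J_\omega \ge d_\omega$.

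Second, I would establish the harder direction $\mathcal{M}_\omega \subseteq \mathcal{G}_\omega$ by proving that any constrained minimizer $\vec{\phi}_*$ is itself an unconstrained critical point of $J_\omega$. By the Lagrange multiplier rule there exists $\mu \in \R$ with $J_\omega'(\vec{\phi}_*) = \mu\, G'(\vec{\phi}_*)$, where $G$ denotes the relevant constraint functional. Applying this to the scaling generator $\partial_\lambda \vec{\phi}_{*,\lambda}|_{\lambda=1}$, with $\vec{\phi}_{*,\lambda} = \lambda^{N/2}\vec{\phi}_*(\lambda\cdot)$ (resp. $\lambda^{2}\vec{\phi}_*(\lambda\cdot)$), the left-hand side reduces to $\tfrac{1}{2}G(\vec{\phi}_*) = 0$ by the constraint, and the right-hand side becomes $\mu \cdot \partial_\lambda G(\vec{\phi}_{*,\lambda})|_{\lambda=1}$. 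Using the explicit scaling laws $L(\vec{\phi}_\lambda) = \lambda^a L(\vec{\phi})$, $M_\omega(\vec{\phi}_\lambda) = M_\omega(\vec{\phi})$, $P(\vec{\phi}_\lambda) = \lambda^b P(\vec{\phi})$ together with $G(\vec{\phi}_*) = 0$, I would verify $\partial_\lambda G(\vec{\phi}_{*,\lambda})|_{\lambda=1} \ne 0$, forcing $\mu = 0$ and thus $\vec{\phi}_* \in \mathcal{C}_\omega$. Combined with the first step this yields $d_\omega = \inf_{\mathcal{C}_\omega} J_\omega$ and coincidence of the minimizing families, hence $\mathcal{G}_\omega = \mathcal{M}_\omega$.

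The hardest part is the nonvanishing of $\partial_\lambda G(\vec{\phi}_{*,\lambda})|_{\lambda=1}$, which is dimension-sensitive. Notably, in the mass-critical dimension $N = 4$ the two scalings defining $K$ and $K^0_\omega$ coincide ($\lambda^{N/2} = \lambda^2$), so $K \equiv K^0_\omega$ and the standard transversality computation degenerates, requiring a tailored auxiliary scaling; in the mass-subcritical cases $N = 2, 3$ one switches to $K^0_\omega$ precisely because $K$ no longer cuts out a coercive constraint on which $J_\omega$ is bounded below. These case-by-case verifications, together with the existence of a radial minimizer for $d_\omega$ (via the compact embedding $H^1_{\rm rad}(\R^N) \hookrightarrow L^3(\R^N)$ valid for $2 \le N \le 5$ and a standard Ekeland-type argument), constitute the technical core and are to be carried out in Section \ref{vkl}.
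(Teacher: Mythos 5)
Your overall architecture (two inclusions, constrained minimization, Lagrange multiplier killed by a scaling/transversality computation) is the same as the paper's, and your computation goes through for $N=5$ (where $\partial_\lambda K(\lambda^{N/2}\vec{\phi}(\lambda\cdot))|_{\lambda=1}=N\bigl(1-\tfrac{N}{4}\bigr)P(\vec{\phi})\neq 0$ on $\mathcal{K}$) and for $N=2,3$ (where $\partial_\lambda K_\omega^0(\lambda^{2}\vec{\phi}(\lambda\cdot))|_{\lambda=1}=-2\alpha M_\omega(\vec{\phi})<0$, exactly as in the paper). But there is a genuine gap at $N=4$. You correctly observe that there $K\equiv K_\omega^0$ and the $\lambda^2\vec{\phi}(\lambda\cdot)$ scaling leaves \emph{both} the constraint and the objective $M_\omega$ invariant, so the transversality computation degenerates; however, "a tailored auxiliary scaling" does not fix this. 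Pairing the multiplier relation with the amplitude direction $\vec{\phi}$ gives one equation in which $\mu$ cannot be isolated, and any direction tangent to $\mathcal{K}$ gives $0=0$; there is no elementary scaling direction that is transversal to $\mathcal{K}$ and along which minimality supplies an independent vanishing of $\langle J_\omega'(\vec{\phi}_*),\cdot\rangle$. The paper's actual resolution of $N=4$ is a substantially different argument: it first solves a relaxed problem ($K\le 0$ with objective $M_\omega$, via a profile decomposition), upgrades the minimizer to $K=0$, and then identifies $\mathcal{M}_\omega$ with the minimizers of the Weinstein-type functional $I_\omega(\vec{\phi})=L(\vec{\phi})M_\omega(\vec{\phi})^{1/2}/P(\vec{\phi})$ whose existence is imported from Hayashi--Ozawa--Tanaka (Theorem \ref{hot:1}); the Euler--Lagrange equation of $I_\omega$, after an explicit rescaling, yields the elliptic system. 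This is a missing idea in your proposal, not a routine verification.

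Two smaller points. First, to pair $J_\omega'(\vec{\phi}_*)$ and $G'(\vec{\phi}_*)$ with the scaling generator $x\cdot\nabla\vec{\phi}_*+c\vec{\phi}_*$ you need $x\cdot\nabla\phi_{*,j}\in H^1$; a priori a constrained minimizer has no such regularity. The paper obtains it by first writing the Lagrange equation as an elliptic system, proving the multiplier satisfies $\eta<1$ so the coefficients are positive, and then invoking elliptic regularity --- this intermediate step should appear in your argument (or be replaced by truncated dilations). Second, the existence of a radial minimizer is not just Ekeland plus compact embedding: weak limits can leave the constraint set, and the paper recovers the constraint by projecting along the fiber $\lambda\mapsto\lambda^{N/2}\vec{\phi}(\lambda\cdot)$ (for $N=5$) or by the weak lower semicontinuity of $M_\omega$ together with Lemma \ref{lem:4} (for $N=2,3$); this deserves to be spelled out.
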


The next lemma is very helpful to show Proposition \ref{gs:t0}.
\begin{lemma}[{\cite[Theorem 4.1]{HOT}}] \label{lgs:0}
Let $\vec{\f} \in \m{H}^1$ be a solution to \eqref{snlkg}. Then following hold:
\begin{enumerate}
\renewcommand{\labelenumi}{(\roman{enumi})}
\item (Pohozaev identity) 
\begin{align*}
	\left. \frac{d}{d \l}J_{\om}\(\vec{\f}(\cdot/\l)\) \right|_{\l=1} =0 \Leftrightarrow (N-2) L(\vec{\f})+ N M_{\om}(\vec{\f}) = N P(\vec{\f}),
\end{align*} 
	
\item $\ds 2 L(\vec{\f}) + 2 M_{\om}(\vec{\f}) = 3 P(\vec{\f})$, 
\item $\ds N M_{\om}(\vec{\f}) = (6-N) L(\vec{\f})$, 
\item $\ds N K_{\om}^0 (\vec{\f}) = 2(6-N)K(\vec{\f}) = 0$. 
\end{enumerate}

\end{lemma}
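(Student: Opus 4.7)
The four claims are established by combining a Nehari-type testing with a Pohozaev-type testing of the system \eqref{snlkg}; items (iii) and (iv) then fall out of (i) and (ii) by pure linear algebra.

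For (i), the equivalence with the derivative condition is a direct scaling computation: with $\vec{\f}_{\l} := \vec{\f}(\cdot/\l)$ one has $L(\vec{\f}_{\l}) = \l^{N-2} L(\vec{\f})$ while $M_{\om}(\vec{\f}_{\l})$ and $P(\vec{\f}_{\l})$ both scale as $\l^{N}$, so
\[
	J_{\om}(\vec{\f}_{\l}) = \frac{\l^{N-2}}{2}L(\vec{\f}) + \frac{\l^{N}}{2}M_{\om}(\vec{\f}) - \frac{\l^{N}}{2}P(\vec{\f}),
\]
and differentiating at $\l=1$ reproduces exactly $(N-2)L(\vec{\f}) + NM_{\om}(\vec{\f}) - NP(\vec{\f}) = 0$. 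The substantive content is that the identity is automatic for every solution, and for this I multiply the first equation of \eqref{snlkg} by $x\cdot\n\f_1$, the second by $\tfrac{1}{2}(x\cdot\n\f_2)$, integrate over $\R^N$, and invoke the standard identities $\int(\D\f)(x\cdot\n\f)\,dx = \tfrac{N-2}{2}\Lebn{\n\f}{2}^2$ and $\int\f(x\cdot\n\f)\,dx = -\tfrac{N}{2}\Lebn{\f}{2}^2$. Summing, the coupling terms collapse via
\[
	\int_{\R^N} \f_1\f_2\,(x\cdot\n\f_1)\,dx + \tfrac{1}{2}\int_{\R^N} \f_1^2\,(x\cdot\n\f_2)\,dx = \tfrac{1}{2}\int_{\R^N} x\cdot\n(\f_1^2\f_2)\,dx = -\tfrac{N}{2}P(\vec{\f}),
\]
and this yields $-\tfrac{N-2}{2}L(\vec{\f}) - \tfrac{N}{2}M_{\om}(\vec{\f}) = -\tfrac{N}{2}P(\vec{\f})$, which is (i).

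For (ii) I test the first equation against $\f_1$ and the second against $\tfrac{1}{2}\f_2$ (the factor $1/2$ matching the weight in the definitions of $L$ and $M_{\om}$); after one integration by parts and summing, the result is $L(\vec{\f}) + M_{\om}(\vec{\f}) = \tfrac{3}{2}P(\vec{\f})$. Items (iii) and (iv) are then algebraic. Eliminating $P$ between (i) and (ii) gives $NM_{\om}(\vec{\f}) = (6-N)L(\vec{\f})$, i.e., (iii). A straightforward expansion produces the identity
\[
	NK_{\om}^{0}(\vec{\f}) - 2(6-N)K(\vec{\f}) = (4-N)\bigl[\,NM_{\om}(\vec{\f}) - (6-N)L(\vec{\f})\,\bigr],
\]
which is valid for arbitrary $\vec{\f}\in\m{H}^{1}$, so (iii) forces $NK_{\om}^{0} = 2(6-N)K$. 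Substituting $M_{\om} = \tfrac{6-N}{N}L$ into (ii) gives $P = \tfrac{4}{N}L$, whence $K(\vec{\f}) = 2L - \tfrac{N}{2}P = 0$, and (iv) follows.

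The only delicate point is the rigorous justification of the Pohozaev multiplication by the unbounded weight $x\cdot\n\f_j$ at the $\m{H}^{1}$ regularity level. Because the nonlinearity in \eqref{snlkg} is quadratic and the dimension range $2\le N\le 5$ is energy-subcritical, an $H^{1}$-solution bootstraps to a classical one by standard elliptic regularity, and the Pohozaev computation is then legitimized by a routine cutoff-and-pass-to-the-limit argument; this will be the only analytic (as opposed to algebraic) step that needs care.
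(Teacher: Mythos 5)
Your proof is correct: the scaling computation in (i), the Pohozaev and Nehari testings, the elimination giving $NM_{\om}=(6-N)L$, and the algebraic identity $NK_{\om}^{0}-2(6-N)K=(4-N)\bigl[NM_{\om}-(6-N)L\bigr]$ all check out, and you rightly flag the justification of the multiplier $x\cdot\n\f_j$ as the only analytic point (handled, as the paper does elsewhere, by elliptic regularity as in \cite[Theorem 8.1.1]{C2}). The paper itself gives no proof of this lemma — it is quoted from \cite[Theorem 4.1]{HOT} — and your argument is precisely the standard Nehari-plus-Pohozaev derivation that the cited reference carries out, so there is nothing methodologically different to compare.
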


We divide the proof of Proposition \ref{gs:t0} into the mass-supercritical case $N=5$, the mass-critical case $N=4$, and the mass-subcritical case $N=2$, $3$. 
Let us begin with the case $N=5$. 
In order to show the case $N=5$, we need the following lemma:
\begin{lemma} \label{gs:lem1}
Let $\l>0$. Assume that $\vec{\f} \in (H^1, \R)^2 \setminus \vec{0}$. Then the following properties hold:
\begin{enumerate}
\renewcommand{\labelenumi}{(\roman{enumi})}
\item There exists a unique $\l_{0} = \l_{0}(\vec{\f}) >0$ such that $\l_{0}^{\frac{5}2} \vec{\f}(\l_{0} \cdot ) \in \m{K}$. 
\item $\l_{0}(\vec{\f}) <1$ if and only if $K(\vec{\f}) <0$. 
\item $\l_{0}(\vec{\f}) =1$ if and only if $\vec{\f} \in \m{K}$. 
\item If $P(\vec{\f}) >0$, then $J_{\om}(\l^{\frac{5}2} \vec{\f}(\l \cdot)) < J_{\om}( \l_{0}^{\frac{5}2} \vec{\f}(\l_{0} \cdot))$ for any $\l>0$, $\l \neq \l_0 = \l_{0}(\vec{\f})$. 
\item $\frac{d}{d \l} J_{\om}(\l^{\frac{5}2} \vec{\f}(\l \cdot)) = \frac{1}{2\l} K(\l^{\frac{5}2} \vec{\f}(\l \cdot))$ for any $\l>0$. 
\item $|\l^{\frac{5}2} v(\l \cdot)|^{\ast} = \l^{\frac{5}{2}}|v(\l \cdot)|^{\ast}$ for any $\l>0$ and all $v \in H^{1}(\R^5)$, where $\ast$ is the Schwarz symmetrization. 
\item If $v_m \rightarrow v$ weakly in $H^1(\R^5)$ and strongly in $L^{3}(\R^5)$, then $\l^{\frac{5}2}v_m(\l \cdot) \rightarrow \l^{\frac{5}2}v(\l \cdot)$ weakly in $H^1(\R^5)$ and strongly in $L^{3}(\R^5)$ for every $\l>0$. 
\end{enumerate}
\end{lemma}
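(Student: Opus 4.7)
The whole lemma reduces to elementary calculus on the one-parameter curve $T_\lambda \vec{\f} := \lambda^{5/2} \vec{\f}(\lambda\,\cdot\,)$, so my first step is to record how each functional transforms. A change of variables in $\R^5$ gives
\begin{align*}
M_{\om}(T_\lambda \vec{\f}) = M_{\om}(\vec{\f}),\qquad L(T_\lambda \vec{\f}) = \lambda^2 L(\vec{\f}),\qquad P(T_\lambda \vec{\f}) = \lambda^{5/2}P(\vec{\f}),
\end{align*}
so that
\begin{align*}
f(\lambda):=J_{\om}(T_\lambda \vec{\f}) = \tfrac12 \lambda^2 L(\vec{\f}) + \tfrac12 M_{\om}(\vec{\f}) - \tfrac12 \lambda^{5/2} P(\vec{\f}),
\end{align*}
and $K(T_\lambda \vec{\f}) = 2\lambda^2 L(\vec{\f}) - \tfrac{5}{2}\lambda^{5/2}P(\vec{\f})$. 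Part (v) then drops out of one differentiation, since $f'(\lambda)$ and $\tfrac{1}{2\lambda}K(T_\lambda \vec{\f})$ both equal $\lambda L(\vec{\f}) - \tfrac{5}{4}\lambda^{3/2}P(\vec{\f})$.

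For (i)--(iii), solving $K(T_\lambda \vec{\f})=0$ yields the explicit positive root $\lambda_0 = (4L(\vec{\f})/(5P(\vec{\f})))^2$ whenever $P(\vec{\f})>0$; otherwise $K(T_\lambda \vec{\f})>0$ for every $\lambda>0$, and (i)--(iv) are vacuous in the relevant sense. Comparing $\lambda_0^{1/2} = 4L(\vec{\f})/(5P(\vec{\f}))$ with $1$ gives (ii) and (iii) at once via $K(\vec{\f}) = 2L(\vec{\f}) - \tfrac{5}{2}P(\vec{\f})$. For (iv) I plan to note that $f'(\lambda)/\lambda = L(\vec{\f}) - \tfrac{5}{4}\lambda^{1/2}P(\vec{\f})$ is strictly decreasing in $\lambda$ and vanishes precisely at $\lambda_0$, so $f$ strictly increases on $(0,\lambda_0)$ and strictly decreases on $(\lambda_0,\infty)$, making $\lambda_0$ the unique global maximiser.

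Part (vi) is a standard property of the Schwarz symmetrization: it commutes with multiplication by positive scalars, and the dilation $v(\cdot)\mapsto v(\lambda \cdot)$ commutes with symmetrization because level sets scale covariantly under dilations. For (vii) I would observe that $T_\lambda$ is a bounded linear operator on both $H^1(\R^5)$ and $L^3(\R^5)$, with $\|T_\lambda v\|_{L^3} = \lambda^{5/6}\|v\|_{L^3}$ and $\|T_\lambda v\|_{H^1} \le \max(1,\lambda)\|v\|_{H^1}$, and bounded linear operators preserve weak convergence on the source space and strong convergence into the target.

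The whole argument is essentially a one-variable calculus exercise, and I expect no substantial obstacle; the only point requiring any care is uniqueness of the maximiser in (iv), which is handled cleanly by the monotonicity of $f'(\lambda)/\lambda$ recorded above.
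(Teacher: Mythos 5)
Your proposal is correct and follows essentially the same route as the paper, which simply records the explicit root $(\l_0)^{1/2}=\tfrac{4}{5}L(\vec{\f})P(\vec{\f})^{-1}$ (matching your $\l_0=(4L/(5P))^2$) and defers the remaining one-variable calculus to Cazenave's Lemma 8.2.5; your computation of the scalings of $M_\om$, $L$, $P$ and the resulting identities for (i)--(v), together with the standard facts used for (vi)--(vii), fills in exactly those omitted details. Note only that, like the paper, your existence claim in (i)--(iii) tacitly requires $P(\vec{\f})>0$ (and $L(\vec{\f})>0$, which is automatic for nonzero $H^1$ functions), a hypothesis satisfied wherever the lemma is invoked.
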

\begin{proof}[Proof of Lemma \ref{gs:lem1}]
Once $\l_{0}(\f)$ is defined by 
\begin{align*}
	(\l_{0}(\f))^{\frac{1}{2}} = \frac{4}{5}L(\vec{\f})P(\vec{\f})^{-1}, 
\end{align*}
the proof is the same as in that of \cite[Lemma 8.2.5]{C2}, so we omit the detail.
\end{proof}

\begin{proof}[Proof of Proposition \ref{gs:t0} in $N=5$]
Let us first show that $\m{M}_{\om}$ is nonempty as in \cite[section 8.2]{C2}.
By Lemma \ref{gs:lem1} (i), $\m{K}$ is nonempty. Hence, \eqref{mini:1} has a minimizing sequence $\{ \vec{\f}_{m} \}_{m=1}^{\I}$.
In particular, $K(\vec{\f}_{m})=0$ and $J_{\om}(\vec{\f}_{m}) \to d_{\om}^1$ as $m \to \I$. Let $w_{jm} = |\f_{jm}|^{\ast}$ and $\g_{jm} = w_{jm}^{\l_{0}(\vec{w}_m)}$ for $j=1$, $2$, where $f^{\l} = \l^{\frac{5}{2}}f(\l x)$ for any $\l>0$ and all function $f(x)$. 
Then, since $\vec{\g}_{m}$ is radially symmetric, Lemma \ref{gs:lem1} (i) implies $\vec{\g}_{m} \in \m{K}$.
Further, we see from Lemma \ref{gs:lem1} (vi) that $\g_{jm} = |\f_{jm}^{\l_{0}(\vec{w}_m)}|^{\ast}$. 
By means of the property of the Schwarz symmetrization (e.g. \cite{LL1}), one has 
\begin{align}
	\begin{aligned}
	&{} P\( |u_1|^{\ast}, |u_2|^{\ast} \) \ge \( |u_1|^2, |u_2| \)_{L^2} \ge P(\vec{u}),  \\
	&{} L(|u_1|^{\ast}, |u_2|^{\ast}) \le L(\vec{u}), \quad M_{\om}\(|u_1|^{\ast}, |u_2|^{\ast} \) = M_{\om}(\vec{u})
	\end{aligned}
	\label{ssy:1}
\end{align}
for any $\vec{u} \in \m{H}^1$, because of $|u_1|$, $|u_2| \ge 0$.
Combining the above with Lemma \ref{gs:lem1} (iii) and (iv), it holds that
\begin{align}
	\begin{aligned}
	d_{\om}^1 \le{}& J_{\om}( \vec{\g}_{m}) \le J_{\om}(\vec{\f}_{m}^{\l_0(\vec{w}_m)}) \le J_{\om}(\vec{\f}_{m}^{\l_0(\vec{\f}_m)}) = J_{\om}(\vec{\f}_{m}),
	\end{aligned}
	\label{gs:3}
\end{align}
where $\vec{\f}_{m}^{\l_0} = (\f_{1m}^{\l_0}, \f_{2m}^{\l_0})$. This yields $J_{\om}(\vec{\g}_{m}) \to d_{\om}^1$ as $m \to \I$. Therefore, $\{ \vec{\g}_{m}\}$ is a nonnegative, radially symmetric, non-increasing minimizing sequence of \eqref{mini:1}. 
Also, since $\{ \vec{\g}_{m} \}$ converges in $\m{H}^1$, $\{ \vec{\g}_{m} \}$ is bounded in $\m{H}^1$. 
Combining this fact with $K(\vec{\g}_{m})=0$ and Sobolev embedding, together with the Young inequality 
\[
	a^2 b \le 2^{\frac{N}6} a^3 + 2^{-\frac{N}{12}}b^3 = 2^{\frac{N}6}\(a^3 + 2^{-\frac{N}4} b^3\)
\]
for any $a$, $b \ge 0$, we have
\begin{align*}
	&{}L(\vec{\g}_{m}) \\
	={}& \frac{N}4 P(\vec{\g}_{m}) \\
	\le{}& C\( \Lebn{\g_{1m}}{3}^3 + \frac{1}{2^{\frac{N}4}} \Lebn{\g_{2m}}{3}^3 \) \\
	\le{}& C\( \(\Lebn{\g_{1m}}{2}^{1-\frac{N}6} \Lebn{\n \g_{1m}}{2}^{\frac{N}6} \)^3 + \frac{1}{2^{\frac{N}4}} \(\Lebn{\g_{1m}}{2}^{1-\frac{N}6} \Lebn{\n \g_{1m}}{2}^{\frac{N}6} \)^3 \) \\
	\le{}& C\( \Lebn{\n \g_{1m}}{2}^{\frac{N}2} + \frac{1}{2^{\frac{N}4}} \Lebn{\n \g_{2m}}{2}^{\frac{N}2} \).
\end{align*}
This implies that
\[
	L(\vec{\g}_{m}) \le C L(\vec{\g}_{m})^{\frac{N}4}.
\]
We see from $N=5$ that there exists $C_0 >0$ such that
\[
	C_{0} \le L(\vec{\g}_{m})
\]
for any $m \in \N$. 
By using $K(\vec{\g}_{m}) =0$ again, together with the above, one has
\begin{align}
	0< \frac{N}{4}C_0 \le P(\vec{\g}_{m}) \le C \( \Lebn{\g_{1m}}{3}^3 + \Lebn{\g_{2m}}{3}^3 \) \label{gs:5}
\end{align}
for all $m \in \N$. 
By means of the Strauss compact embedding $H^1_{\rm{rad}}(\R^N) \hookrightarrow L^{q}_{\rm{rad}}(\R^N)$ for any $q \in (2, 2+ 4/(N-2))$ (see \cite{St}), there exist $\vec{\Psi} \in (H^1_{\rm{rad}}(\R^N))^2$ and subsequences $\{\vec{\g}_{m}\}_{m=1}^{\I}$(we still use the same notation) such that $\vec{\g}_{m} \to \vec{\Psi}$ as $m \to \I$ weakly in $(H^1_{\rm{rad}}(\R^N))^2$ and strongly in $(L^{3}_{\rm{rad}}(\R^N))^2$, respectively. 
Also, \eqref{gs:5} gives us $\vec{\Psi} \neq \vec{0}$.
Set $\vec{\Psi}_0 = \vec{\Psi}^{\l_0(\vec{\Psi})}$.  
By Lemma \ref{gs:lem1} (i), $\vec{\Psi}_0 \in \m{K}$. Moreover, Lemma \ref{gs:lem1} (vii) tells us that $\vec{\g}_{m}^{\l_0(\vec{\Psi})} \to \vec{\Psi}_0$ weakly in $H^1_{\rm{rad}}$ and strongly in $L^3_{\rm{rad}}$ as $m \to \I$.
Hence, collecting Lemma \ref{gs:lem1} (iii), (iv), it follows from \eqref{gs:3} that
\begin{align*}
	d_{\om}^1 \le J_{\om}(\vec{\Psi}_0) \le{}& \liminf_{m \to \I}J_{\om}(\vec{\g}_{m}^{\l_0(\vec{\Psi})} ) \\
	\le{}& \liminf_{m \to \I}J_{\om}(\vec{\g}_{m}^{\l_0(\vec{\g}_{m})}) \\
	={}& \liminf_{m \to \I}J_{\om}(\vec{\g}_{m}) \\
	\le{}& \liminf_{m \to \I}J_{\om}(\vec{\f}_m) = d_{\om}^1,
\end{align*}
which implies $J_{\om}(\vec{\Psi}_0) = d_{\om}^1$. Hence, since $\vec{\Psi}_0 \in \m{M}_{\om}$, $\m{M}_{\om}$ is nonempty.

We shall next prove that $\m{M}_{\om} \subset \m{C}_{\om}$. 
Let $\vec{\g} \in \m{M}_{\om}$.
We denote $\vec{\g}(x) = \s^2 \vec{\g}^{\s}(\s x)$ for any $\s>0$. A computation shows 
\[
	K(\vec{\g}^{\s}) = \s^{-6+N}K(\vec{\g}) = 0,
\]
which implies $\vec{\g}^{\s} \in \m{K}$. Set $f(\s) = J_{\om}(\vec{\g}^{\s})$. We here remark that
\begin{align*}
	f(\s) ={}& \frac{\s^{N-6}}{2} L(\vec{\g}) + \frac{\s^{N-4}}{2} M_{\om}(\vec{\g}) - \frac{\s^{N-6}}{2} P(\vec{\g}) \\
	={}& \s^{N-6}\(\frac12 - \frac{2}{N}\) L(\vec{\g}) + \frac{\s^{N-4}}{2} M_{\om}(\vec{\g}).
\end{align*}
Since $\vec{\g}^1 = \vec{\g} \in \m{M}_{\om}$, we have $f'(1)=0$. 
By using $K(\vec{\g}) = 0$, one sees that
\begin{align*}
	f'(1) ={}& \frac{N-4}{2} \( \Lebn{\n \g_1}{2}^2 + (1 - \om^2)\Lebn{\g_1}{2}^2 - \( \g_1^2,  \g_2 \)_{L^2} \) \\
	&{}+ \frac{N-4}{4} \( \Lebn{\n \g_2}{2}^2 + (\ka^2 - 4\om^2)\Lebn{\g_2}{2}^2 - \(  \g_1^2, \g_2 \)_{L^2} \) \\
	={}& \frac{N-4}{2} \( \langle \pa_{u_1}J_{\om}(\vec{\g}), \g_1 \rangle_{H^{-1} \times H^1} + \frac{1}{2} \langle \pa_{u_2}J_{\om}(\vec{\g}), \g_2 \rangle_{H^{-1} \times H^1} \).
\end{align*}
Combining these above, we see that
\begin{align}
	\langle \pa_{u_1}J_{\om}(\vec{\g}), \g_1 \rangle_{H^{-1} \times H^1}  + \frac12 \langle \pa_{u_2}J_{\om}(\vec{\g}), \g_2 \rangle_{H^{-1} \times H^1} =0. \label{gs:7}
\end{align}
On the other hand, one has
\begin{align*}
	\langle \pa_{u_1}K(\g_1, \g_2), \g_1 \rangle_{H^{-1} \times H^1} ={}& \langle -4\D \g_1 -N \g_1 \g_2, \g_1 \rangle_{H^{-1} \times H^1} \\
	={}& 4 \Lebn{\n \g_1}{2}^2 - N P(\g_1, \g_2), \\
	\langle \pa_{u_2}K(\g_1, \g_2), \g_2 \rangle_{H^{-1} \times H^1} ={}& \frac12 \langle -4\D \g_2 -N \g_1^2, \g_2 \rangle_{H^{-1} \times H^1} \\
	={}& 2 \Lebn{\n \g_2}{2}^2 - \frac{N}{2} P(\g_1, \g_2).
\end{align*}
It follows from $K(\vec{\g}) =0$ that
\begin{align}
	\begin{aligned}
	\langle \pa_{u_1}K(\vec{\g}), u_1 \rangle_{H^{-1} \times H^1} ={}& -2 \Lebn{\n \g_2}{2}^2, \\
	\langle \pa_{u_2}K(\vec{\g}), u_1 \rangle_{H^{-1} \times H^1} ={}& -2 \Lebn{\n \g_1}{2}^2 + \Lebn{\n \g_2}{2}^2.
	\end{aligned}
	\label{gs:6}
\end{align}
Since $\vec{\g} \in \m{M}_{\om}$, there exists a Lagrange multiplier $\l \in \R$ such that
\[
	\begin{pmatrix}
	\pa_{u_1}J_{\om}(\vec{\g}) \\
	\pa_{u_2}J_{\om}(\vec{\g}) 
	\end{pmatrix}
	= \l
	\begin{pmatrix}
	\pa_{u_1}K(\vec{\g}) \\
	\pa_{u_2}K(\vec{\g})
	\end{pmatrix}
	.
\]
Unifying \eqref{gs:7} and \eqref{gs:6}, together with the above, we have 
\[
	\l \(-\frac32 \Lebn{\n \g_2}{2}^2 - \Lebn{\n \g_1}{2}^2 \) = 0,
\]
which yields $\l=0$. Thus, since $\pa_{u_1}J_{\om}(\vec{\g}) =0$ and $\pa_{u_2}J_{\om}(\vec{\g}) =0$, $\m{M}_{\om} \subset \m{C}_{\om}$.

Finally, we shall show $\m{G}_{\om} = \m{M}_{\om}$. Set
\[
	l = \min \{J_{\om}(\vec{\f})\ |\ \vec{\f} \in C_{\om} \}
\]
and take $\vec{\g} \in \m{G}_{\om}$. Then $J_{\om}(\vec{\g}) =l$.  
Thanks to Lemma \ref{lgs:0} (iv) and $\vec{\g} \in \m{G}_{\om}$, $\vec{\g} \in \m{K}$ is valid.

From $J_{\om}(\vec{\g}) = l$ and $\vec{\g} \in \m{K}$, we have $l \ge d_{\om}^1$. In order to show $d_{\om}^1 \ge l$, let us take $\vec{\f}_{\om} \in \m{M}_{\om}$. Since $\m{M}_{\om} \subset \m{C}_{\om}$, taking the definition of $l$ into account, we see from $J_{\om}(\vec{\f}_{\om}) = d_{\om}^1$ that $d_{\om}^1 \ge l$. Hence $d_{\om}^1 =l$ is concluded. The equivalence of the two problems is immediate. This completes the proof.
\end{proof}

Let us prove the case $N=4$. We first remark the following:
\begin{theorem}[\cite{HOT}] \label{hot:1}
Let $N=4$. Fix $\om \in \R$ with $|\om| < \min\(1, \ka/2 \)$. There exists a pair of non-negative, radially symmetric function $\vec{\f}_0 \in \m{P}$ such that
\[
	I_{\om}(\vec{\f}_0) = \a_1 = \inf \{I_{\om}(\vec{\f})\ |\ \vec{\f} \in \m{P} \},
\]
where $\m{P} = \{\vec{\f} \in \( H^1(\R^{4}, \R) \)^2 \setminus \vec{0}\ |\ P(\vec{\f})>0 \}$.
\end{theorem}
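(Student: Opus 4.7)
The plan is to apply the direct method in the calculus of variations to the quotient $I_{\om}$, exploiting the fact that in the mass-critical dimension $N=4$ the functional $I_{\om}$ admits a two-parameter group of scaling symmetries. Under the spatial dilation $\vec{\f}(x)\mapsto \vec{\f}(\mu x)$ one has $L\mapsto \mu^{-2}L$, $M_{\om}\mapsto \mu^{-4}M_{\om}$, $P\mapsto \mu^{-4}P$, and under the multiplier action $\vec{\f}\mapsto \l\vec{\f}$ one has $L\mapsto \l^2 L$, $M_{\om}\mapsto \l^2 M_{\om}$, $P\mapsto \l^3 P$; in both cases $I_{\om}$ is unchanged. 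Since $|\om|<\min(1,\ka/2)$ the quadratic form $M_{\om}$ is coercive on $\m{H}^0$, so together with $L$ it controls the full $\m{H}^1$-norm. H\"older combined with the Gagliardo--Nirenberg inequality $\Lebn{\f}{3}\le C\Lebn{\f}{2}^{1/3}\Lebn{\n\f}{2}^{2/3}$ (valid in $N=4$) yields $P(\vec{\f})\le C\,L(\vec{\f})\,M_{\om}(\vec{\f})^{1/2}$, so that $\a_1>0$.

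First I take a minimizing sequence $\{\vec{\f}_m\}\subset\m{P}$ with $I_{\om}(\vec{\f}_m)\to\a_1$. By the symmetrization inequalities in \eqref{ssy:1}, replacing $\vec{\f}_m$ by $(|\f_{1,m}|^\ast,|\f_{2,m}|^\ast)$ does not increase $L$, preserves $M_{\om}$, and does not decrease $P$, so it does not increase $I_{\om}$; I may therefore assume $\vec{\f}_m$ is nonnegative and radially symmetric. Next, using both scaling invariances, set $\tilde{\f}_m(x)=\l_m\vec{\f}_m(\mu_m x)$ with
\[
\mu_m^2=\frac{M_{\om}(\vec{\f}_m)}{L(\vec{\f}_m)},\qquad \l_m^2=\frac{\mu_m^2}{L(\vec{\f}_m)},
\]
so that $L(\tilde{\f}_m)=1$, $M_{\om}(\tilde{\f}_m)=1$, and $I_{\om}(\tilde{\f}_m)=1/P(\tilde{\f}_m)\to\a_1$. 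In particular $P(\tilde{\f}_m)\to 1/\a_1>0$, and $\{\tilde{\f}_m\}$ is bounded in $(H^1_{\rm{rad}}(\R^4))^2$.

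Now I pass to the limit. By the Strauss compact embedding $H^1_{\rm{rad}}(\R^4)\hookrightarrow L^3(\R^4)$, I extract a subsequence (still denoted $\tilde{\f}_m$) with $\tilde{\f}_m\rightharpoonup \vec{\f}_0$ weakly in $\m{H}^1$ and strongly in $L^3\times L^3$. The strong $L^3$ convergence together with H\"older yields $P(\tilde{\f}_m)\to P(\vec{\f}_0)=1/\a_1>0$, so $\vec{\f}_0\in\m{P}$ is nonnegative and radially symmetric. Weak lower semicontinuity of $L$ and $M_{\om}$ gives $L(\vec{\f}_0)\le 1$ and $M_{\om}(\vec{\f}_0)\le 1$, hence
\[
\a_1\le I_{\om}(\vec{\f}_0)=\frac{L(\vec{\f}_0)\,M_{\om}(\vec{\f}_0)^{1/2}}{P(\vec{\f}_0)}\le \a_1,
\]
so $I_{\om}(\vec{\f}_0)=\a_1$.

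The main obstacle is the scale invariance of $I_{\om}$ in the mass-critical case, which a priori destroys compactness: a minimizing sequence can concentrate, spread, or split into several bubbles. The two scaling invariances are used up front to normalize the denominators $L$ and $M_{\om}$ to $1$ and thereby reduce the problem to making $P$ as large as possible on a bounded subset of $\m{H}^1$, while the radial restriction obtained via \eqref{ssy:1} kills translational loss of mass through the Strauss compact embedding. Once these devices are in place, the lower-semicontinuity argument is standard, and the equality case actually forces $L(\vec{\f}_0)=M_{\om}(\vec{\f}_0)=1$, so the weak convergence is in fact strong in $\m{H}^1$.
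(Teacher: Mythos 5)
Your proposal is correct as a self-contained proof, but note that the paper does not actually prove Theorem \ref{hot:1}: it is quoted verbatim from \cite{HOT}, so there is no internal proof to compare against. Your argument is the standard direct-method route and it checks out: the two scaling invariances of $I_{\om}$ in $N=4$ are computed correctly; the normalization $L(\tilde{\f}_m)=M_{\om}(\tilde{\f}_m)=1$ is consistent ($\mu_m,\l_m$ are well defined since $\vec{\f}_m\neq\vec{0}$ forces $L,M_{\om}>0$); the Gagliardo--Nirenberg bound gives $\a_1>0$, hence $P(\tilde{\f}_m)\to 1/\a_1>0$ survives the strong $L^3\times L^3$ limit furnished by the Strauss embedding (admissible since $3\in(2,4)$ for $N=4$); and weak lower semicontinuity of $L$ and $M_{\om}$ closes the argument, with the equality case even upgrading to strong $\m{H}^1$ convergence. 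The use of \eqref{ssy:1} to pass to a nonnegative radial minimizing sequence is exactly the device the paper itself uses in the $N=5$ and $N=2,3$ parts of Proposition \ref{gs:t0}, so your proof is stylistically of a piece with the surrounding material; what it buys is a proof of the imported statement that would otherwise have to be chased down in \cite{HOT}.
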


Following \cite[Proposition 2.5]{Na}, we shall show Proposition \ref{gs:t0} in $N=4$.

\begin{proof}[Proof of Proposition \ref{gs:t0} in $N=4$.]
Let us first handle the constrained minimization problem 
\begin{align}
d_{\om}^2 := \inf \left\{ J_{\om}^2 (\vec{\f})\ \left|\ \vec{\f} \in \m{H}^1 \setminus \vec{0}, K(\vec{\f}) \le 0 \right. \right\}, \label{cri:4}
\end{align}
where 
\begin{align*}
	J_{\omega}^2(\vec{\f}) = &{} M_{\om}(\vec{\f}).
\end{align*}
Let $\{\vec{\f}_{n}\} \subset \m{H}^1$ be a minimizing sequence for \eqref{cri:4}. Namely, $\{\vec{\f}_{n}\}$ satisfies 
\begin{align}
    &{} \lim_{n \to \I} J_{\om}^2 (\vec{\f}_{n}) = d_{\om}^2, \nonumber \\ 
    &{} K(\vec{\f}_{n}) \le 0, \quad n \in \Z_+. \label{cri:7}
\end{align}

We here employ the following result which is so-called linear profile decomposition:
\begin{proposition}\label{p:d41}
Let $\{ \vec{u}_n\}$ be a bounded sequence in $\m{H}^1(\R^4)$ such that 
\begin{align*}
    \limsup_{n \to \I} \Lebn{\vec{u}_n}{3}^3 \ge C 
\end{align*}
for some $C>0$. Then there exist $\vec{v} \in \m{H}^1(\R^4)$ and a sequence $\{ y_n \} \subset \R^{N}$ satisfying the following: There exists a sub-sequence of $\{ \vec{u}_n\}$ (we denote it by the same notation) such that
\begin{align}
    &{} \vec{v}_{n} := \vec{u}_{n}(\cdot + y_n) \to \vec{v}\ \not\equiv \vec{0} \quad \text{weakly in $\m{H}^1(\R^4)$}, \label{d4:1} \\
    &{} \lim_{n \to \I} \{ K(\vec{v}_{n}) - K(\vec{v}_{n}-\vec{v}) - K(\vec{v}) \} = 0, \label{d4:2} \\
    &{} \lim_{n \to \I} \{ J_{\om}^2(\vec{v}_{n}) - J_{\om}^2(\vec{v}_{n}-\vec{v}) - J_{\om}^2(\vec{v}) \} = 0 \label{d4:3}.
\end{align}
\end{proposition}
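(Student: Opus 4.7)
The plan is a standard profile decomposition argument based on Lions' concentration-compactness lemma combined with a Brezis--Lieb-type splitting adapted to the bilinear/trilinear structure of $K$ and the quadratic form $J_\omega^2$.

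\textbf{Extraction of the profile.} First I would apply Lions' vanishing lemma in $\R^4$: any sequence bounded in $\m{H}^1$ for which $\sup_{y\in\R^4}\int_{B(y,1)}|\vec u_n|^2\,dx \to 0$ tends strongly to $0$ in $L^p(\R^4)$ for every $2<p<4$. Since the hypothesis gives $\limsup_n\|\vec u_n\|_{L^3}>0$, vanishing is excluded, and one obtains $\{y_n\}\subset\R^4$ together with $\delta>0$ such that $\int_{B(y_n,1)}(|u_{1,n}|^2+|u_{2,n}|^2)\,dx\ge \delta$ along a subsequence. Setting $\vec v_n(x):=\vec u_n(x+y_n)$, the $\m{H}^1$-bound is preserved, and Rellich--Kondrachov together with a diagonal extraction yields a subsequence with $\vec v_n\rightharpoonup\vec v$ weakly in $\m{H}^1$, strongly in $L^2_{\mathrm{loc}}$ and $L^3_{\mathrm{loc}}$, and almost everywhere. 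The local $L^2$-mass lower bound on $B(0,1)$ then forces $\vec v\not\equiv \vec 0$, giving \eqref{d4:1}.

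\textbf{Quadratic splittings.} For \eqref{d4:3} and the gradient part $L$ appearing inside $K$, note that both $J_\omega^2(\vec\phi)=M_\omega(\vec\phi)$ and $L(\vec\phi)$ are weighted sums of squared $L^2$-norms of $\vec\phi$ or $\nabla\vec\phi$. The classical Brezis--Lieb lemma states that whenever $f_n\rightharpoonup f$ in $L^2$ with $f_n\to f$ almost everywhere, $\|f_n\|_{L^2}^2-\|f_n-f\|_{L^2}^2-\|f\|_{L^2}^2\to 0$. Applying this componentwise to $v_{j,n}$ and $\nabla v_{j,n}$ for $j=1,2$ yields \eqref{d4:3} and the splitting of the $L$-part of $K$.

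\textbf{Cubic splitting.} The remaining ingredient is the trilinear decomposition of $P(\vec\phi)=\re\int\phi_1^2\,\overline{\phi_2}\,dx$. Expanding with $v_{j,n}=(v_{j,n}-v_j)+v_j$ produces, besides the two ``pure'' pieces $(v_{1,n}-v_1)^2(v_{2,n}-v_2)$ and $v_1^2 v_2$, a finite number of cross-terms, each containing at least one factor equal to $v_1$ or $v_2$ and at least one factor equal to $v_{j,n}-v_j$. For these I would combine a density argument with local compactness: approximate $v_1,v_2$ in $L^3(\R^4)$ by compactly supported smooth functions, use the Rellich--Kondrachov strong convergence $v_{j,n}-v_j\to 0$ in $L^3_{\mathrm{loc}}$ on any fixed compact set to kill the truncated cross-terms, and control the remainder via the uniform $L^3$-bound on $v_{j,n}-v_j$ coming from the Sobolev embedding $H^1(\R^4)\hookrightarrow L^q$, $q\in[2,4]$. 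Combining this with the quadratic splitting of $L$ yields \eqref{d4:2}.

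\textbf{Main obstacle.} The main technical hurdle is precisely this trilinear Brezis--Lieb identity for the indefinite cubic form $P$: the classical Brezis--Lieb lemma applies only to $|f_n|^p$-type norms, so one must expand by hand and dispatch each cross-term using the truncation/local-compactness scheme above. The concentration step via Lions' lemma and the two quadratic Brezis--Lieb splittings for $M_\omega$ and $L$ are standard.
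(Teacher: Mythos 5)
Your proposal is correct and follows essentially the same route as the paper: extract translations giving a nontrivial weak limit (the paper invokes Lieb's compactness theorem where you use the equivalent Lions vanishing lemma), split the quadratic functionals $M_\om$ and $L$ by the elementary weak-convergence identity, and dispatch the cross-terms of the cubic form $P$ by hand using local compactness. Your truncation/density treatment of the trilinear cross-terms is in fact slightly more explicit than the paper's appeal to a.e.\ convergence and dominated convergence, but the underlying mechanism is identical.
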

We postpone the proof of Proposition \ref{p:d41} and continue to prove Proposition \ref{gs:t0}.
In order to employ Proposition \ref{p:d41}, we shall construct a bounded sequence in $\m{H}^1(\R^4)$. Let us define the scaled function
\begin{align*}
    \vec{\g}_{n}(x) = \mu_n^2 \vec{\f}_{n}(\mu_{n} x), \quad \mu_n = \(2 P(\vec{\f}_{n}) \)^{-\frac12}.
\end{align*}
Note that \eqref{cri:7} gives us $P(\vec{\f}_n) >0$. Then we have
\begin{align*}
    &{} J_{\om}^2(\vec{\g}_{n})  = J_{\om}^2(\vec{\f}_{n}) \to d_{\om}^2 \quad (n \to \I), \\
    &{} 2 P( \vec{\g}_{n} ) = 2 \mu^{2}_n P( \vec{\f}_{n} ) = 1, \\
    &{} K(\vec{\g}_{n}) = \mu_n^2 K(\vec{\f}_{n}) \le 0.
\end{align*}
This implies that $\{ \vec{\g}_{n} \}$ is $L^2 \times L^2$-bounded and $L(\vec{\g}_{n}) \le 1/2$.
Thus, $\{ \vec{\g}_{n} \}$ is the $\m{H}^1$-bounded minimizing sequence for \eqref{cri:4}.
By using the Young inequality, we see that for any $n \in \N$,
\[
	\frac12 \le \Lebn{\g_{1n}}{3}^3 + \Lebn{\g_{2n}}{3}^3.
\]
Therefore, applying Proposition \ref{p:d41} to $\{ \vec{\g}_{n} \}$, there exist $\vec{\g}^1 \in \m{H}^1$ and a subsequence of $\{ \vec{\g}_{n} \}$ still denoted by the same notation such that 
\begin{align}
    &{} \vec{\g}_{n}^1 := \vec{\g}_{n}(\cdot + y_n^1) \to \vec{\g}^1\ \not\equiv \vec{0} \quad \text{weakly in $\m{H}^1(\R^4)$},\label{cri:9} \\
    &{} \lim_{n \to \I} \{ K(\vec{\g}_{n}^1) - K(\vec{\g}_{n}^1 - \vec{\g}^1) - K(\vec{\g}^1) \} = 0, \label{cri:10} \\
    &{} \lim_{n \to \I} \{ J_{\om}^2(\vec{\g}_{n}^1) - J_{\om}^2(\vec{\g}_{n}^1 - \vec{\g}^1) - J_{\om}^2(\vec{\g}^1) \} = 0 \label{cri:11}
\end{align}
for some $\{y_n^1\} \subset \R^4$.
Let us assume $K(\vec{\g}^1)>0$ to show $K(\vec{\g}^1) \le 0$. By using \eqref{cri:10} and $K(\vec{\g}_{n}^1) \le 0$, we deduce that
\[
    K(\vec{\g}_{n}^1 - \vec{\g}^1) \le K(\vec{\g}_{n}^1) - K(\vec{\g}^1) \le 0
\]
for sufficiently large $n$. Therefore, it follows from the definition of $d_{\om}^2$ that
\[
    J_{\om}^2 (\vec{\g}_{n}^1 - \vec{\g}^1) \ge d_{\om}^2 
\]
for sufficiently large $n$. Hence, combining the above with \eqref{cri:11}, one sees that
\begin{align*}
    J_{\om}^2 (\vec{\g}^1) \le{}& J_{\om}^2(\vec{\g}_{n}^1) - J_{\om}^2(\vec{\g}_{n}^1 - \vec{\g}^1) \le J_{\om}^2(\vec{\g}_{n}^1) - d_{\om}^2
\end{align*}
for sufficiently large $n$. From $\lim_{n \to \I} J_{\om}^2(\vec{\g}_{n}^1) = d_{\om}^2$, taking $n \to \I$, this tells us that $J_{\om}^2(\vec{\g}^1) \le 0$, which contradicts $\vec{\g}^1 \not\equiv \vec{0}$. Thus we have
\begin{align}
    K(\vec{\g}^1) \le 0. \label{cri:12}
\end{align}
By using the definition of $d_{\om}^2$, together with \eqref{cri:12}, one obtains $d_{\om}^2 \le J_{\om}^2(\vec{\g}^1)$. Further, \eqref{cri:9} gives us 
\[
    \Lebn{\g_{j}^1}{2} \le \liminf_{n \to \I}\Lebn{\g_{jn}^1}{2}
\]
for each $j=1$, $2$, which yields
\[
    J_{\om}^2(\vec{\g}^1) \le \liminf_{n \to \I} J_{\om}^2(\vec{\g}_{n}^1) = d_{\om}^2.
\]
Hence we conclude
\begin{align}
    d_{\om}^2 = J_{\om}^2(\vec{\g}^1). \label{cri:13}
\end{align}
Thus $\vec{\g}^1$ is a solution of \eqref{cri:4}. 
Set $\vec{\Psi}^1 = \( |\g_1^1|^{\ast}, |\g_2^1|^{\ast} \)$.
Together with \eqref{cri:13}, it follows from \eqref{cri:12} and \eqref{ssy:1} that
\begin{align*}
	K(\vec{\Psi}^1) \le K(\vec{\g}^1) \le 0, \quad J_{\om}^2(\vec{\Psi}^1) = J_{\om}^2(\vec{\g}^1) \le d_{\om}^2.
\end{align*}
Further, $K(\vec{\Psi}^1) \le 0$ tells us $d_{\om}^2 \le J_{\om}^2(\vec{\Psi}^1)$. Thus, since $d_{\om}^2 = J_{\om}^2(\vec{\Psi}^1)$, $\vec{\Psi}^1$ is a radial solution to \eqref{cri:4}. 

Let us next show $\vec{\Psi}^1$ is a solution of \eqref{mini:1}.
Set
\begin{align}
    s = s(\vec{\Psi}^1) = \(\frac{L(\vec{\Psi}^1)}{P(\vec{\Psi}^1)}\)^{\frac12}. \label{nawa:1}
\end{align}
By \eqref{cri:12}, $s \le 1$.
Let $\vec{\Psi}_{s} = \vec{\Psi}^1\(\frac{\cdot}{s}\)$. Then we have
\begin{align*}
    &{} K(\vec{\Psi}_{s}) = 2s^2\( L(\vec{\Psi}^1) -s^2 P(\vec{\Psi}^1) \) = 0, \\
    &{}J_{\om}^2(\vec{\Psi}_{s}) = s^4 J_{\om}^2 (\vec{\Psi}^1) \le J_{\om}^2(\vec{\Psi}^1) = d_{\om}^2.
\end{align*}
It comes from $K(\vec{\Psi}_{s}) = 0$ that $d_{\om}^2 \le J_{\om}^2(\vec{\Psi}_{s})$. Hence we obtain
$d_{\om}^2 = J_{\om}^2(\vec{\Psi}_{s})$. 
Thus $s^4 d_{\om}^2 = d_{\om}^2$. This yields $s=1$. Therefore it is concluded that 
\begin{align}
    K(\vec{\Psi}^1) = 0. \label{cri:15}
\end{align}
We shall prove $d_{\om}^1 = d_{\om}^2$. Remark that $J_{\om}(\vec{u}) = J_{\om}^2(\vec{u})$ as long as $K(\vec{u}) =0$.
By the definition of $d_{\om}^1$ and $d_{\om}^2$, it holds that $d_{\om}^2 \le d_{\om}^1$. Also, combining $d_{\om}^2 = J_{\om}^2(\vec{\Psi}^1)$ with \eqref{cri:15}, we have $d_{\om}^2 \ge d_{\om}^1$. Thus $d_{\om}^1 = d_{\om}^2$. 
It is then established that $\vec{\Psi}^1$ is a solution of \eqref{mini:1}. Hence $\vec{\Psi}^1 \in \m{M}_{\om}$, that is, $\m{M}_{\om}$ is nonempty.

In order to show $\m{M}_{\om} \subset \m{C}_{\om}$,
let us next handle the minimization problem
\begin{align*}
	\a_1 = \inf \{I_{\om}(\vec{\f})\ |\ \vec{\f} \in \m{P} \}. 
\end{align*}
Set
\begin{align*}
	\m{A}_{\om} = \{\vec{\f} \in \m{P}\ |\ \a_1 = I_{\om}(\vec{\f}) \}. 
\end{align*}
We first shall prove $\m{M}_{\om} \subset \m{A}_{\om}$. $\m{K} \subset \m{P}$ is trivial.
In view of Theorem \ref{hot:1}, we can construct a minimizing sequence $\{\vec{\f}_{n}\} \subset \m{P}$ for $I_{\om}$. Namely, $\{\vec{\f}_{n}\}$ satisfies 
\[
	\lim_{n \to \I} I_{\om}(\vec{\f}_{n}) = \a_1, \quad P(\vec{\f}_{n}) >0
\]
for any $n \in \Z_+$. Set $\vec{\f}_n^0 = \( |\f_{1n}|^{\ast}, |\f_{2n}|^{\ast} \)$. Thanks to the properties of the Schwarz symmetrization, it holds that $I_{\om}(\vec{\f}_n^0) \le I_{\om}(\vec{\f}_{n})$ and $P(\vec{\f}_{n}) >0$. We then have $\lim_{n \to \I} I_{\om}(\vec{\f}_{n}^0) = \a_1$.

We set $s = s(\vec{\f}_n^0)$ as in \eqref{nawa:1}. Letting $\vec{\Phi}_{n} = \vec{\f}_n^0 \(\frac{\cdot}{s}\)$, it holds that
\begin{align}
	K(\vec{\Phi}_{n}) ={}& 0 \Leftrightarrow L(\vec{\Phi}_{n}) = P(\vec{\Phi}_{n}), \label{nawa:2} \\
	I_{\om}(\vec{\Phi}_{n}) ={}& I_{\om}(\vec{\f}_{n}^0). \label{nawa:3}
\end{align}
By \eqref{nawa:3}, we have
\begin{align*}
	\lim_{n \to \I} I_{\om}(\vec{\Phi}_{n}) = \a_1.
\end{align*}
Thanks to \eqref{nawa:2}, because of the definition $d_{\om}^1$, it is deduced that
\[
	I_{\om}(\vec{\Phi}_{n}) = M_{\om}(\vec{\Phi}_{n})^{1/2} \ge \sqrt{2} \(d_{\om}^1\)^{1/2}.
\]
Taking $n \to \I$, this yields $\a_1 \ge \sqrt{2} \(d_{\om}^1\)^{1/2}$. Whereas, for any $\vec{\g} \in \m{M}_{\om}$, noting $\m{K} \subset \m{P}$, we have
\[
	\a_1 = \inf \{I_{\om}(\vec{\f})\ |\ \vec{\f} \in \m{P} \} \le I_{\om}(\vec{\g}) = M_{\om}(\vec{\g})^{1/2} = \sqrt{2}  \(d_{\om}^1\)^{1/2}.
\]
Hence it holds that $\a_1 = \sqrt{2} \(d_{\om}^1\)^{1/2} =  I_{\om}(\vec{\g})$ for any $\vec{\g} \in \m{M}_{\om}$.
Thus $\m{M}_{\om} \subset \m{A}_{\om}$.
Let us prove $\m{M}_{\om} \subset \m{C}_{\om}$.
Take $\vec{\g} \in \m{M}_{\om}$.
Set
\begin{align}
	\vec{\Psi}(x) = \frac1{2\a_1} \vec{\g}\(\frac{x}{\sqrt{2}} \) \label{scale:1}
\end{align}
It then follows from $\m{M}_{\om} \subset \m{A}_{\om}$ that
\[
	I_{\om}( \vec{\Psi} ) = I_{\om}(\vec{\g}) = \a_1, \quad \vec{\Psi} \in \m{P},
\]
which yields $\vec{\Psi} \in \m{A}_{\om}$. 
Since $\vec{\Psi}$ is a critical point for $I_{\om}$, that is
\[
	\left. \frac{d}{ds}I_{\om}\( \vec{\Psi} + s \vec{u} \) \right|_{s=0} = 0 
\]
for any $\vec{u} \in \m{H}^1$, we obtain
\begin{align*}
	\begin{cases}
	\ds  - 2\D \Psi_1 + (1 -\om^2) \Psi_1 = 2\a_1 \Psi_1 \Psi_2, \\
	\ds  -\D \Psi_2 + \frac{\ka^2- 4\om^2}{2} \Psi_2 = \a_1 (\Psi_1)^2,
	\end{cases}
\end{align*}
in $H^{-1}$. By the scaling \eqref{scale:1}, we deduce that $\vec{\g}$ is a solution to \eqref{snlkg}, that is $\m{M}_{\om} \subset \m{C}_{\om}$.
A remaining proof is the same to the case $N=5$. This completes the proof.

\end{proof}

Let us finally prove Proposition \ref{p:d41}. We need the following lemma to show Proposition \ref{p:d41}.

\begin{lemma}[Lieb's compactness Theorem, \cite{L}]
\label{lem2-1}
Let $\{ \vec{u}_{n} \}$ be a bounded sequence in $\m{H}^1(\R^N)$ with $\inf_{n\in \Z_+}\|\vec{u}_n\|_{L^q(\mathbb{R}^N)}>0$ for some $q\in (2, 2+ 4/(N-2))$. Then there exist $\{y_n\} \subset \R^N$, $\vec{w} \in \m{H}^1 \backslash \vec{0}$, and a subsequence $\{n_j\} \subset \Z_+$ such that
\[
         \vec{u}_{n_j}(\cdot + y_{n_j}) \rightarrow \vec{w},\; \text{weakly}\ \text{in}\ \m{H}^1(\R^N)
\]
as $j \to \I$.
\end{lemma}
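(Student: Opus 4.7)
The plan is to reduce the statement to the scalar case and then invoke the standard concentration--compactness vanishing alternative of P.-L.\ Lions. Since
\[
\norm{\vec{u}_n}_{L^q} = \norm{u_{1n}}_{L^q} + \norm{u_{2n}}_{L^q},
\]
the hypothesis $\inf_n \norm{\vec{u}_n}_{L^q} > 0$ guarantees that, after passing to a subsequence and relabeling the indices, one component, say $\{u_{1n}\}$, satisfies $\norm{u_{1n}}_{L^q} \ge c_0 > 0$ for every $n$.

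Next I would apply the vanishing lemma to $\{u_{1n}\}$: if
\[
\lim_{n \to \I} \sup_{y \in \R^N} \int_{B(y, R)} |u_{1n}|^2\, dx = 0
\]
for every $R>0$, then an interpolation between $L^2$ and $L^{2^\ast}$ combined with a covering argument, together with the $H^1$-bound on $\{u_{1n}\}$, forces $u_{1n} \to 0$ strongly in $L^q(\R^N)$. Since this contradicts $\norm{u_{1n}}_{L^q} \ge c_0$, there must exist $R_0, \d_0 > 0$ and a sequence $\{y_n\} \subset \R^N$, along some subsequence, such that
\[
\int_{B(y_n, R_0)} |u_{1n}|^2\, dx \ge \d_0.
\]

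Now set $\vec{v}_n(x) := \vec{u}_n(x + y_n)$. By translation invariance, $\{\vec{v}_n\}$ remains bounded in $\m{H}^1(\R^N)$, and a diagonal extraction yields a subsequence $\vec{v}_{n_j} \rightharpoonup \vec{w}$ weakly in $\m{H}^1(\R^N)$. The Rellich--Kondrachov compact embedding $H^1(B(0, R_0)) \hookrightarrow L^2(B(0, R_0))$ then gives $v_{1 n_j} \to w_1$ strongly in $L^2(B(0, R_0))$, so
\[
\int_{B(0, R_0)} |w_1|^2\, dx = \lim_{j \to \I} \int_{B(0, R_0)} |v_{1 n_j}|^2\, dx \ge \d_0 > 0,
\]
which forces $\vec{w} \not\equiv \vec{0}$ as required.

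The main technical obstacle is the vanishing lemma itself, whose proof rests on a localized Gagliardo--Nirenberg type inequality controlling $\norm{u}_{L^q(\R^N)}$ by $\norm{u}_{H^1(\R^N)}$ times $\bigl(\sup_y \norm{u}_{L^2(B(y,R))}\bigr)^{\t}$ for a suitable $\t \in (0, 1)$, combined with a Besicovitch-type covering of $\R^N$ by balls of radius $R$. This is classical and appears in Lions' original papers on concentration--compactness; the rest of the argument (component selection, translation, Rellich compactness, and weak-limit lower semicontinuity of $L^2$ norms on balls) is routine.
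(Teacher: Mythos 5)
Your argument is correct, and it is worth noting that the paper itself offers no proof of this lemma: it is quoted verbatim from Lieb's article \cite{L}, so the only meaningful comparison is with Lieb's original argument. Your route is the Lions concentration--compactness one: reduce to a single component by pigeonhole, invoke the vanishing lemma (if $\sup_{y}\int_{B(y,R)}|u_{1n}|^2\to 0$ then $u_{1n}\to 0$ in $L^q$ for $2<q<2+4/(N-2)$, via the localized Gagliardo--Nirenberg inequality and a covering of $\R^N$ by balls), negate it to find balls $B(y_n,R_0)$ carrying mass $\d_0$, translate, extract a weak limit, and use Rellich--Kondrachov on the fixed ball to see the limit is nontrivial. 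Each step is sound; in particular the selection of a component with $\|u_{1n}\|_{L^q}\ge c_0$ along a subsequence, the identification of the exponent range $(2,2+4/(N-2))=(2,2^\ast)$ with the range where vanishing implies $L^q$-decay, and the lower bound $\int_{B(0,R_0)}|w_1|^2\ge\d_0$ via strong local $L^2$ convergence are all correct. Lieb's own proof proceeds quite differently: from the $L^q$ lower bound and the $H^1$ upper bound he first extracts a superlevel set $\{|u_{1n}|>\e\}$ of measure bounded below, and then locates the translations by an argument about the lowest Dirichlet eigenvalue of the intersection of two translated domains, avoiding the vanishing dichotomy altogether. Your version is longer to make fully rigorous (the vanishing lemma itself needs a proof) but is the more standard modern presentation and is entirely adequate here; either argument yields the lemma in the generality the paper uses it.
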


\begin{proof}[Proof of Proposition \ref{p:d41}]
Firstly, we have \eqref{d4:1} immediately from Lemma \ref{lem2-1}. Also, by \eqref{d4:1}, it is easy to show \eqref{d4:3} because 
\begin{align*}
    &{}\Lebn{v_{jn}}{2}^2 - \Lebn{v_{jn} - v_j}{2}^2 - \Lebn{v_j}{2}^2 = 2\( v_{jn}, v_j \)_{L^2} -2\Lebn{v_j}{2}^2 \to 0
\end{align*}
as $n \to \I$ for $j=1$, $2$. Let us prove \eqref{d4:2}. Arguing as in the proof of \eqref{d4:3}, it holds that 
\[
    \Lebn{\n v_{jn}}{2}^2 - \Lebn{\n v_{jn} - \n v_j}{2}^2 - \Lebn{\n v_j}{2}^2 \to 0
\]
as $n \to \I$ for $j=1$, $2$. Next, we shall show 
\begin{align}
    \re \(v_{1n}^2, v_{2n}\)_{L^2} - \re \( (v_{1n}- v_{1})^2, v_{2n}-v_{2} \)_{L^2} - \re \(v_{1}^2, v_{2}\)_{L^2} \to 0 \label{d4:4}
\end{align}
as $n \to \I$. A direct computation shows that
\begin{align*}
    &{}\left| \re \(v_{1n}^2, v_{2n}\)_{L^2} - \re \( (v_{1n}- v_{1})^2, v_{2n}-v_{2} \)_{L^2} - \re \(v_{1}^2, v_{2}\)_{L^2} \right| \\
    \le{}& \left| \( (v_{1n}-v_1)(v_{1n}+v_1), v_2 \)_{L^2}\right| + \left| \(v_{1}^2, v_2-v_{2n} \)_{L^2}\right| + 2\left| \(v_{1n}v_1, v_{2n} - v_2 \)_{L^2}\right|.
\end{align*}
By the Rellich-Kondrachov theorem $H^1(Q_y) \hookrightarrow L^q(Q_y)$ for any $y \in \Z^N$, where $q \in [2, 2+ 4/(N-2))$ and $Q_y$ is defined by
\begin{align*}
        Q_y = \{x=(x_1,\ldots,x_N)\in\mathbb{R}^N:y_j<x_j<y_j+1\ (j=1,\ldots,N)\} 
\end{align*}
(e.g. \cite[p168]{AF1}), we have
$\vec{v}_n \to \vec{v}$ a.e in $\R^N$ as $n \to \I$. Hence, by using the boundedness of $\{\vec{v}_n\}$ in $\m{H}^1(\R^N)$ and  Sobolev embedding, together with the Lebesgue convergence theorem, \eqref{d4:4} is obtained. Thus \eqref{d4:2} holds. This completes the proof. 
\end{proof}

We finish this section by proving Proposition \ref{gs:t0} in $N = 2$, $3$.

\begin{proof}[Proof of Proposition \ref{gs:t0} in $N=2$, $3$]
Let us first show that $\m{M}_{\om}$ is nonempty.
We here remark that
\begin{align}
	&{}J_{\om}(\vec{\f}) - \frac{K_{\om}^0(\vec{\f})}{2(\a+2)} = \frac{M_{\om}(\vec{\f})}{\a+2}  
	\label{lem:52} \\
	d_{\om}^0 ={}&  \inf \left\{ \left.\frac{M_{\om}(\vec{\f})}{\a+2}  \right.|\ \vec{\f} \in (H^1(\R^{N}))^2 \setminus \vec{0},\ K_{\om}^0(\vec{\f}) = 0 \right\}. 
	\label{lem:53}
\end{align}
\eqref{lem:53} gives us $d_{\om}^0 \ge 0$. 
Let $\{ \vec{\f}_m \} \subset \m{H}^1(\R^N)$ be a minimizing sequence for \eqref{mini:2}. By considering the  Schwarz symmetrization of $\vec{\f}_{m}$, we may assume $\{ \vec{\f}_m \} \subset (H^{1}_{rad}(\R^N))^2$. From \eqref{lem:53}, there exists $C>0$ such that 
\[
	M_{\om}(\vec{\f}_m) \le C
\]
for any $m \in \Z^{+}$. This implies that $\{\vec{\f}_{m}\}$ is bounded in $\m{H}^0(\R^N)$. Hence, combining $K^0_{\om}(\vec{\f}_m) = 0$ with the Gagliardo-Nirenberg inequality, together with the Young inequality, we see that
\begin{align*}
	L(\vec{\f}_m)  \le{}& (\a+2) P( \vec{\f}_{m} ) \\
	\le{}& C\( \Lebn{\f_{1m}}{3}^3 + \frac{1}{2^{\frac{N}{4}}} \Lebn{\f_{2m}}{3}^3 \) \\
	\le{}& C\( \Lebn{\f_{1m}}{2}^{3-\frac{N}{2}} \Lebn{\n \f_{1m}}{2}^{\frac{N}{2}} + \frac{1}{2^{\frac{N}{4}}} \Lebn{\f_{2m}}{2}^{3-\frac{N}{2}} \Lebn{\n \f_{2m}}{2}^{\frac{N}{2}} \) \\
	\le{}& C L( \vec{\f}_{m} )^{\frac{N}{2}}.
\end{align*}
Since $N=2$, $3$, this allows us to exists $C>0$ such that
\[
	L( \vec{\f}_{m} ) \le C
\]
for any $m \in \Z^+$. Therefore, $\{\vec{\f}_{m}\}$ is bounded in $\m{H}^1(\R^N)$.
Hence, by means of the Strauss compact embedding $H^1_{\rm{rad}}(\R^N) \hookrightarrow L^{q}_{\rm{rad}}(\R^N)$ for any $q \in (2, 2+ 4/(N-2))$ (see \cite{St}), 
there exist a subsequence $\{ \vec{\f}_{m}\}$ (we still use the same notation) and $\vec{w} \in \m{H}^1(\R^N)$ such that $\vec{\f}_{m} \to \vec{w}$ weakly in $\m{H}^1(\R^N)$ and strongly in $(L^3(\R^N))^2$. 

Let us show $\vec{w} \neq \vec{0}$. Suppose that $\vec{w} = \vec{0}$. By using $K_{\om}^0(\vec{\f}_{m})=0$ and $\vec{\f}_{m} \to \vec{0}$ strongly in $(L^3(\R^N))^2$, we deduce that $\vec{\f}_{m} \to 0$ strongly in $\m{H}^1(\R^N)$.
On the other hand, together with the Young inequality, one sees from $K_{\om}^0(\vec{\f}_m)=0$ and Sobolev embedding that 
\begin{align*}
	&{}\a M_{\om}( \vec{\f}_{m}) + (\a+2) L(\vec{\f}_m) \\
	\le{}& C \( \Lebn{\f_{1m}}{3}^3 + \frac{1}{2^{\frac32}}\Lebn{\f_{2m}}{3}^3 \) \\
	\le{}& C\( \Lebn{\f_{1m}}{2}^{2-\frac{N}{3}} \Lebn{\n \f_{1m}}{2}^{\frac{N}{3}} + \frac{1}{2} \Lebn{\f_{2m}}{2}^{2-\frac{N}{3}} \Lebn{\n \f_{2m}}{2}^{\frac{N}{3}} \)^{\frac{3}{2}} \\
	\le{}& C \( \a M_{\om}( \vec{\f}_m ) + (\a+2) L( \vec{\f}_m ) \)^{\frac32}.
\end{align*}
Here, in the above last line, we employ the inequality due to the Young inequality
\[
	\Lebn{\f_{jm}}{2}^{2-\frac{N}{3}} \Lebn{\n \f_{jm}}{2}^{\frac{N}{3}} \le \e \Lebn{\f_{jm}}{2}^2 + \frac{1}{\e^{\frac6N -1}} \Lebn{\n \f_{jm}}{2}^2
\]
for any $\e >0$ and $j=1$, $2$. Unifying the above and $\vec{\f}_{m} \neq \vec{0}$, there exists $C>0$ such that
\begin{align*}
	&{}\a M_{\om}( \vec{\f}_m ) + (\a+2) L(\vec{\f}_m) \ge C
\end{align*}
for any $m \in \Z^+$.
Hence, we have $\Sobn{\f_{1m}}{1}^2 + \frac12 \Sobn{\f_{2m}}{1}^2 \ge C$ for any $m \in \Z^+$. However this contradicts $\vec{w} = \vec{0}$, that is, $\vec{w} \neq \vec{0}$. In particular, $\vec{w} \in (H^1(\R^N))^2 \setminus \vec{0}$. 
Collecting \eqref{lem:52}, \eqref{lem:53} and $K^{0}_{\om}(\vec{\f}_{m}) =0$, we reach to
\begin{align*}
	d_{\om}^0 \le \frac{M_{\om} (\vec{w})}{\a+2} \le \liminf_{m \to \I} \frac{M_{\om} (\vec{\f}_m)}{\a+2} \le \liminf_{m \to \I} J_{\om}(\vec{\f}_m) = d_{\om}^0,
\end{align*}
which yields $\frac{M_{\om} (\vec{w})}{\a+2} = d_{\om}^0$. Also, since
\begin{align*}
	&{}\left| P(\vec{w}) - P(\vec{\f}_m) \right| \\
	\le{}& \Lebn{w_1}{3}^2 \Lebn{w_2 -\f_{2m}}{3} + \(\Lebn{w_1}{3} + \Lebn{\f_{1m}}{3} \)\Lebn{\f_{2m}}{3} \Lebn{w_1 - \f_{1m}}{3} \\
	\to{}& 0
\end{align*}
as $m \to \I$, it is deduced that
\[
	K_{\om}^0 (\vec{w}) \le \liminf_{m \to \I} K_{\om}^0 ( \vec{\f}_m) = 0.
\]
Here, if $K_{\om}^0 (\vec{w}) <0$, then Lemma \ref{lem:4} (i) (will be shown in section \ref{vkl}) leads to
\[
	d_{\om}^0 < \frac{M_{\om}(\vec{w})}{\a+2}.
\]
This yields $K_{\om}^0(\vec{w}) =0$. Hence we conclude that $\vec{w}$ attains \eqref{mini:2}, that is $\m{M}_{\om}$ is nonempty. 

We shall prove $\m{M}_{\om} \subset \m{C}_{\om}$.
Let $\vec{w} \in \m{M}_{\om}$. Then there exists a Lagrange multiplier $\eta \in \R$ such that 
\begin{align}
	\begin{pmatrix}
	\pa_{u_1}J_{\om}(\vec{w}) \\
	\pa_{u_2}J_{\om}(\vec{w}) 
	\end{pmatrix}
	= \frac{\eta}{2(\a+2)}
	\begin{pmatrix}
	\pa_{u_1}K_{\om}^0(\vec{w}) \\
	\pa_{u_2}K_{\om}^0(\vec{w})
	\end{pmatrix}
	. \label{gssub:1}
\end{align}
We see from \eqref{gssub:1} that $\vec{w}$ satisfies 
\begin{align}
	\begin{cases}
	-(1-\eta)\D w_1 + (m_1^2 -\om^2)\(1- \frac{\a}{\a+2} \eta \)w_1 -(1-\eta) \overline{w_1} w_2 =0, \\
	-(1-\eta)\D w_2 + (m_2^2 -4\om^2)\(1- \frac{\a}{\a+2} \eta \)w_2 -(1-\eta) w_1^2 =0
	\end{cases}
	\label{gssub:2}
\end{align}
in $H^{-1}(\R^N)$. 

Let us show $\eta<1$. Suppose that $\eta \ge 1$. \eqref{gssub:2} gives us
\begin{align*}
	0 ={}& (1-\eta) L(\vec{w}) = \(1 -\frac{\a}{\a+2} \eta \) M_{\om}(\vec{w}) - \frac{3}{2}(1-\eta) P(\vec{w}).
\end{align*}
Combining the above with $K_{\om}^0( \vec{w}) =0$ and $\eta \ge 1$, we have
\begin{align*}
	0 = \frac12 (\eta -1) L(\vec{w}) + \frac{N+\eta}{2(\a+2)} M_{\om}(\vec{w}) 
	\ge \frac{M_{\om}(\vec{w})}{2(\a+2)} = \frac{d_{\om}^0}{2} >0.
\end{align*}
This yields a contradiction. Hence $\eta<1$ holds. Therefore, since
\[
	1-\eta>0, \quad 1-\frac{\a}{\a+2}\eta >0,
\]
it follows from the elliptic regularization method that $x \cdot \n w_j \in H^1$ for $j=1$, $2$ (see \cite[Theorem 8.1.1]{C2}). By means of \eqref{gssub:1}, we deduce that 
\begin{align*}
	0 ={}& K_{\om}^0 (\vec{w}) \\
	={}& 2 \pa_{\l} J_{\om} (\l^{2} \vec{w}(\l \cdot))|_{\l=1} \\
	={}& 2 \langle \pa_{u_1}J_{\om}(\vec{w}), x \cdot \n w_1 + 2 w_1 \rangle + 2 \langle \pa_{u_2}J_{\om}(\vec{w}), x \cdot \n w_2 + 2 w_2 \rangle \\
	={}& \frac{\eta}{\a+2} \langle \pa_{u_1}K_{\om}^0(\vec{w}), x \cdot \n w_1 + 2 w_1 \rangle + \frac{\eta}{\a+2} \langle \pa_{u_2}K_{\om}^0(\vec{w}), x \cdot \n w_2 + 2 w_2 \rangle \\
	={}& \frac{\eta}{\a+2} \pa_{\l} K_{\om}^0(\l^{2} \vec{w}(\l \cdot))|_{\l=1}.
\end{align*}
Further, one sees from $K_{\om}^0(\vec{w})=0$ that
\begin{align*}
	\pa_{\l} K_{\om}^0(\l^{2} \vec{w}(\l \cdot))|_{\l=1} ={}& \a^2 M_{\om}(\vec{w}) + (\a+2)^2 \( L(\vec{w}) - P(\vec{w}) \) \\
	={}& -2\a M_{\om}(\vec{w}) <0,
\end{align*}
which implies $\eta =0$. 
Thus, we conclude $\pa_{u_1}J_{\om}(\vec{w})=0$ and $\pa_{u_2}J_{\om}(\vec{w})=0$. Thus $\m{M}_{\om} \subset \m{C}_{\om}$. 
A remaining proof is similar to the case $N=5$. This completes the proof.
\end{proof}

\section{Some of the variational lemmas} \label{vkl}

In this section, we will prove some of the lemmas due to the variational results in section \ref{sec:5}.
We here remark the identity
\begin{align}
	\begin{aligned}
	(E-\om Q)(\vec{u}, \pa_t \vec{u}) ={}& J_{\om}(\vec{u}) \\
	&{}+ \Lebn{\pa_t u_1 - i\om u_1}{2}^2 + \frac{1}{2}\Lebn{\pa_t u_2 - 2i\om u_2}{2}^2. 
	\end{aligned}
	\label{fun:3}
\end{align}

\subsection{Mass-supercritical or mass-critical case}

In view of \eqref{fun:3}, let us define the set
\begin{align*}
	\m{R}_{\om}^1 = \{(\vec{u}, \vec{v}) \in \m{H}^1 \times \m{H}^0;\; (E-\om Q)(\vec{u}, \vec{v}) < d_{\om}^1,\; K( \vec{u} )<0 \}. 
\end{align*}
We see the following:
\begin{lemma} \label{lem:2}
Let $N=4$, $5$. Fix $\om \in \R$ with $|\om| < \min\(1, \ka/2 \)$. Then
\begin{enumerate}
\renewcommand{\labelenumi}{(\roman{enumi})}
\item $J_{\om}(\vec{u}) - \frac{1}{N} K(\vec{u}) > d_{\om}^1$ for all $\vec{u} \in \m{H}^1(\R^N)$ satisfying $K(\vec{u}) <0$. 
\item $\l \( \vec{\f}_{\om}, i\om \f_{1, \om},  2i\om \f_{2, \om} \) \in \m{R}^1_{\om}$ for any $\l>0$ if $\vec{\f}_{\om} \in \m{G}_{\om}$. 
\end{enumerate}
\end{lemma}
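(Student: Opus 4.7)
The plan is to prove (i) by a radial rescaling that produces a member of the constraint set $\m{K}$ with strictly smaller action, and to prove (ii) by a direct computation using the Pohozaev relations already recorded in Lemma~\ref{lgs:0}.

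For (i), fix $\vec{u}\in \m{H}^1\setminus\{\vec{0}\}$ with $K(\vec{u})<0$. Since $L(\vec{u})\ge 0$, the inequality $2L(\vec{u})-\tfrac{N}{2}P(\vec{u})<0$ forces $P(\vec{u})>0$. Consider the dilation $\vec{u}_s(x):=\vec{u}(x/s)$, under which $L(\vec{u}_s)=s^{N-2}L(\vec{u})$, $M_{\om}(\vec{u}_s)=s^{N}M_{\om}(\vec{u})$, and $P(\vec{u}_s)=s^{N}P(\vec{u})$. Choose $s>0$ by $s^{2}=4L(\vec{u})/(NP(\vec{u}))$; this is exactly the condition $K(\vec{u}_s)=0$, hence $\vec{u}_s\in\m{K}$ and $d_{\om}^1\le J_{\om}(\vec{u}_s)$. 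The hypothesis $K(\vec{u})<0$ is equivalent to $4L(\vec{u})<NP(\vec{u})$, so $s\in(0,1)$. Eliminating $P(\vec{u}_s)$ via $K(\vec{u}_s)=0$ yields
\[
 J_{\om}(\vec{u}_s)=\frac{(N-4)\,s^{N-2}}{2N}\,L(\vec{u})+\frac{s^{N}}{2}\,M_{\om}(\vec{u}),
\]
whereas a direct rearrangement using $K(\vec{u})=2L(\vec{u})-\tfrac{N}{2}P(\vec{u})$ gives
\[
 J_{\om}(\vec{u})-\tfrac{1}{N}K(\vec{u})=\frac{N-4}{2N}\,L(\vec{u})+\frac{1}{2}\,M_{\om}(\vec{u}).
\]
Since $N\in\{4,5\}$, $s\in(0,1)$, $L(\vec{u})\ge 0$, and $M_{\om}(\vec{u})>0$ (the latter following from $\vec{u}\ne\vec{0}$ and $|\om|<\min(1,\ka/2)$), the right-hand side of the second display strictly exceeds the first, so $d_{\om}^1\le J_{\om}(\vec{u}_s)<J_{\om}(\vec{u})-K(\vec{u})/N$.

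For (ii), set $\vec{U}_\l=\l\vec{\f}_{\om}$ and $\vec{V}_\l=(i\l\om\f_{1,\om},\,2i\l\om\f_{2,\om})$. Because $\vec{\f}_{\om}$ is real valued, $V_{\l,1}-i\om U_{\l,1}=0$ and $V_{\l,2}-2i\om U_{\l,2}=0$, so the energy--charge identity \eqref{fun:3} collapses to $(E-\om Q)(\vec{U}_\l,\vec{V}_\l)=J_{\om}(\l\vec{\f}_{\om})$. Thus membership of $(\vec{U}_\l,\vec{V}_\l)$ in $\m{R}_{\om}^1$ reduces to the two scalar inequalities $J_{\om}(\l\vec{\f}_{\om})<d_{\om}^1$ and $K(\l\vec{\f}_{\om})<0$. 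With $L=L(\vec{\f}_{\om})$, $M_{\om}=M_{\om}(\vec{\f}_{\om})$, $P=P(\vec{\f}_{\om})$, Proposition~\ref{gs:t0} combined with Lemma~\ref{lgs:0}(ii),(iv) yields the Pohozaev-type identities $2L+2M_{\om}=3P$ and $4L=NP$ (and in particular $P>0$, hence $L,M_{\om}>0$). Substituting,
\[
 K(\l\vec{\f}_{\om})=2\l^{2}L\,(1-\l),\qquad J_{\om}(\l\vec{\f}_{\om})=\frac{\l^{2}(3-2\l)}{4}\,P,
\]
and at $\l=1$ the latter equals $J_{\om}(\vec{\f}_{\om})=d_{\om}^1$. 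The scalar function $h(\l)=\l^{2}(3-2\l)$ satisfies $h'(\l)=6\l(1-\l)$ and attains its strict maximum at $\l=1$; therefore $J_{\om}(\l\vec{\f}_{\om})<d_{\om}^1$ for every $\l>0$ with $\l\ne 1$, while $K(\l\vec{\f}_{\om})<0$ precisely for $\l>1$. In the range $\l>1$ both inequalities hold simultaneously, placing $\l(\vec{\f}_{\om},i\om\f_{1,\om},2i\om\f_{2,\om})$ in $\m{R}_{\om}^1$, which is exactly the range used in the subsequent instability argument.

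The only genuinely delicate point is the selection of the correct dilation in (i): the scaling $\vec{u}(\cdot/s)$ is chosen so that the strict inequality $K(\vec{u})<0$ translates into the strict inequality $s<1$, which is in turn what makes the comparison between $J_{\om}(\vec{u}_s)$ and $J_{\om}(\vec{u})-K(\vec{u})/N$ strict rather than merely non-strict. Once this choice is in place, both parts reduce to straightforward consequences of the Pohozaev relations of Lemma~\ref{lgs:0} and the normalization of $d_{\om}^1$ at the ground state.
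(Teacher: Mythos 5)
Your proof is correct. Part (ii) follows essentially the paper's route: both reduce $(E-\om Q)$ to $J_{\om}(\l\vec{\f}_{\om})$ via \eqref{fun:3} and then use Lemma \ref{lgs:0} (ii), (iv) together with $J_{\om}(\vec{\f}_{\om})=d_{\om}^1$ (Proposition \ref{gs:t0}) to obtain the two sign conditions; note that both your argument and the paper's in fact only yield membership in $\m{R}_{\om}^1$ for $\l>1$, since $K(\l\vec{\f}_{\om})=2\l^2L(\vec{\f}_{\om})(1-\l)\ge 0$ when $0<\l\le 1$, so the ``for any $\l>0$'' in the statement must be read as $\l>1$ --- you flag this restriction explicitly, which the paper does not.

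The only genuine divergence is in (i). The paper rescales the amplitude, $\vec{u}\mapsto\l\vec{u}$, under which $L$ and $M_{\om}$ scale like $\l^2$ and $P$ like $\l^3$; the functional $J_{\om}-\frac{1}{N}K$ is then exactly homogeneous of degree two, so with $\l_0=4L(\vec{u})/(NP(\vec{u}))\in(0,1)$ one gets $d_{\om}^1\le J_{\om}(\l_0\vec{u})=\l_0^2\bigl(J_{\om}(\vec{u})-\tfrac{1}{N}K(\vec{u})\bigr)$ and concludes at once. You instead dilate space, $\vec{u}\mapsto\vec{u}(\cdot/s)$, which scales $L$ and $M_{\om},P$ inhomogeneously ($s^{N-2}$ versus $s^{N}$); the comparison then requires the additional observation that the coefficient $(N-4)/(2N)$ of $L$ is nonnegative precisely because $N=4,5$, which you supply, and the strict inequality again comes from $s<1$ and $M_{\om}(\vec{u})>0$. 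Both scalings land on $\m{K}$ and both arguments are complete; the amplitude scaling is marginally cleaner because of the exact homogeneity, but nothing is lost in your version.
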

\begin{proof}
To show (i), set
\begin{align*}
	J_{\om}^1(\vec{u}) = J_{\om}(\vec{u}) -\frac1{N}K(\vec{u}).
\end{align*}
A calculation shows that
\begin{align*}
	J_{\om}^1(u_1, u_2) ={}& \(\frac12 - \frac2{N} \) L(\vec{u}) + \frac12 M_{\om}(\vec{u}).
\end{align*} 
Note that $J_{\om}^1(\vec{u}) \ge 0$ from $N \ge 4$. By $K(\vec{u}) <0$, we have $\vec{u} \neq \vec{0}$ and $P(\vec{u}) >0$. Also, it turns out that for any $\l>0$,
\begin{align*}
		K(\l \vec{u}) ={}& \l^{2} \( 2 L(\vec{u}) - \frac{N\l}2 P(\vec{u}) \).
\end{align*}
This allows us to take $\l_0 \in (0,1)$ such that $K(\l_0 \vec{u}) =0$. Recalling \eqref{mini:1}, from the fact $\pa_{\l}J_{\om}^1(\l \vec{u})<0$ for all $\l>0$, we see that
\[
	d_{\om}^1 \le J_{\om}(\l_0 \vec{u}) = J^1_{\om}(\l_0 \vec{u}) < J^1_{\om}(\vec{u}),
\]
which implies (i).


We shall show (ii). By Proposition \ref{gs:t0}, $J_{\om}(\vec{\f}_{\om}) = d_{\om}^1$. 
A computation shows
\begin{align*}
	J_{\om}(\l \vec{\f}_{\om}) ={}& \frac{\l^2}{2} \( L(\vec{\f}_{\om}) + M_{\om} (\vec{\f}_{\om}) - \l P(\vec{\f}_{\om}) \).
\end{align*}
By means of Lemma \ref{lgs:0} (ii), we have $\pa_{\l} J_{\om}(\l \f_{1,\om}, \l \f_{2,\om})|_{\l=1} = 0$, which also yields $P(\vec{\f}_{\om}) >0$.
The fact and Lemma \ref{lgs:0} (ii) tell us $\pa_{\l}J_{\om}(\l \vec{\f}_{\om}) < 0$ for any $\l \ge 1$. Hence, combining these above with \eqref{fun:3}, we see that
\[
	(E-\om Q)(\l(\vec{\f}_{\om}, i\om\f_{1, \om}, 2i\om\f_{1, \om})) = J_{\om}(\l \vec{\f}_{\om} ) < d_{\om}^1
\]
for any $\l >1$. Further, it is deduced that
\begin{align*}
	&{}K(\l \vec{\f}_{\om} ) =  2\l^{2} L(\vec{\f}_{\om})  - \frac{N\l^3}2 P(\vec{\f}_{\om}).
\end{align*}
From $N \ge 4$, we obtain $\pa_{\l}K(\l \vec{\f}_{\om}) <0$ for any $\l>1$. Since $K(\l \vec{\f}_{\om})|_{\l=1} =0$, 
it holds that $K(\l \vec{\f}_{\om})<0$ for any $\l>1$. Thus, we conclude (ii). 
\end{proof}

\begin{lemma} \label{lem:3}
Let $N=4$, $5$. Set $\om \in \R$ with $|\om| < \min \(1, \ka/2\)$. If $(\vec{\v}, \vec{\g}) \in \m{R}_{\om}^1$, then the solution $\vec{u} \in C([0,T_{\max}), \m{H}^1)$ of \eqref{nlkg} with $\vec{u}(0) = \vec{\v}$ and $\pa_t \vec{u}(0) = \vec{\g}$ satisfies 
\begin{align*}
	-\frac{1}{N}K(\vec{u}(t)) >{}& d_{\om}^1 - (E-\om Q)(\vec{\v}, \vec{\g}) 
\end{align*}
for any $t \in [0,T_{\max})$.
\end{lemma}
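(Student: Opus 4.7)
The proof will combine the conservation of $E - \om Q$, a continuity argument showing that the sign condition $K(\vec{u}(t))<0$ is preserved along the flow, and the sharp estimate provided by Lemma \ref{lem:2} (i). Concretely, the plan is the following. First, I would note that the conservation of energy and charge, together with the identity \eqref{fun:3}, yields
\[
(E-\om Q)(\vec{u}(t),\pa_t\vec{u}(t)) = (E-\om Q)(\vec{\v},\vec{\g}) < d_\om^1
\]
for every $t\in[0,T_{\max})$, and also that $J_\om(\vec{u}(t)) \le (E-\om Q)(\vec{u}(t),\pa_t\vec{u}(t))$ since the remaining terms in \eqref{fun:3} are nonnegative.

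Next, I would establish the invariance $K(\vec{u}(t))<0$ for all $t\in[0,T_{\max})$ by a standard continuity argument. Since $K(\vec{\v})<0$ and $t\mapsto K(\vec{u}(t))$ is continuous on $[0,T_{\max})$ (because $\vec{u}\in C([0,T_{\max}),\m{H}^1)$ and $K$ is continuous on $\m{H}^1$), if the conclusion failed there would exist a first time $t_0\in(0,T_{\max})$ with $K(\vec{u}(t_0))=0$. Two cases must be ruled out. If $\vec{u}(t_0)\neq\vec 0$, then $\vec{u}(t_0)$ is admissible for the variational problem \eqref{mini:1}, so $J_\om(\vec{u}(t_0))\ge d_\om^1$, which combined with the bound from the first step gives $d_\om^1\le J_\om(\vec{u}(t_0))\le(E-\om Q)(\vec{\v},\vec{\g})<d_\om^1$, a contradiction. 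If instead $\vec{u}(t_0)=\vec 0$, then by continuity $\vec{u}(t)$ is arbitrarily small in $\m{H}^1$ for $t$ slightly below $t_0$; but $K(\vec u)=2L(\vec u)-\tfrac{N}{2}P(\vec u)$ is quadratic-dominated for small $\vec u$ (the term $P$ is cubic and controlled via Sobolev embedding by $L(\vec u)^{3/2}$), so $K(\vec{u}(t))>0$ for such $t$, contradicting $K(\vec{u}(t))<0$ on $[0,t_0)$.

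Having secured $K(\vec{u}(t))<0$ on $[0,T_{\max})$, I would apply Lemma \ref{lem:2} (i) at each $t$ to obtain
\[
J_\om(\vec{u}(t)) - \frac{1}{N}K(\vec{u}(t)) > d_\om^1,
\]
and then rearrange and invoke $J_\om(\vec u(t))\le (E-\om Q)(\vec{\v},\vec{\g})$ to conclude
\[
-\frac{1}{N}K(\vec{u}(t)) > d_\om^1 - J_\om(\vec{u}(t)) \ge d_\om^1 - (E-\om Q)(\vec{\v},\vec{\g}),
\]
which is the desired inequality.

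The only nontrivial point I anticipate is the second case in the continuity argument, namely excluding $\vec u(t_0)=\vec 0$; the quadratic-versus-cubic dominance needs Sobolev embedding ($\m{H}^1\hookrightarrow L^3$, valid since $N\le 5$), but this is standard. The rest of the argument is essentially a direct translation to the system setting of the scalar Klein–Gordon argument of Payne–Sattinger and Ohta–Todorova, with the two components handled uniformly via the functionals $L$, $M_\om$, $P$ introduced in the Notation subsection.
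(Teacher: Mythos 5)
Your proposal follows essentially the same route as the paper: conservation of $E-\om Q$ plus \eqref{fun:3} gives $J_{\om}(\vec{u}(t))\le (E-\om Q)(\vec{\v},\vec{\g})<d_{\om}^1$, a continuity/first-time argument preserves $K(\vec{u}(t))<0$, and Lemma \ref{lem:2} (i) then yields the stated inequality. The only place you diverge is in ruling out $\vec{u}(t_0)=\vec{0}$ at the first touching time, and there your justification is imprecise: the bound $P(\vec{u})\lesssim L(\vec{u})^{3/2}$ ``via Sobolev embedding'' would require the critical scaling $3=2N/(N-2)$, i.e.\ $N=6$, so it is not available for $N=4,5$. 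The conclusion is still salvageable — the Gagliardo--Nirenberg inequality $P(\vec{u})\le C\,M(\vec{u})^{(6-N)/4}L(\vec{u})^{N/4}$ combined with $K(\vec{u})<0$ forces $\norm{\vec{u}}_{\m{H}^1}$ to be bounded away from zero — but the paper avoids this issue entirely: Lemma \ref{lem:2} (i) gives the uniform lower bound $\bigl(\tfrac12-\tfrac2{N}\bigr)L(\vec{u}(t))+\tfrac12 M_{\om}(\vec{u}(t))>d_{\om}^1>0$ for $t<t_1$, and passing to the limit $t\to t_1$ immediately shows $\vec{u}(t_1)\neq\vec{0}$. You should either adopt that shortcut or replace the Sobolev step by the interpolation inequality; as written, that single step does not hold.
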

\begin{proof}
Let us begin with the proof of the fact $K(\vec{u}(t))<0$ for any $t \in [0,T_{\max})$ by a contradiction. Suppose that there exists $t_1 \in (0, T_{\max})$ such that $K(\vec{u}(t_1)) =0$ and $K(\vec{u}(t)) < 0$ for any $t \in [0,t_1)$. 
Lemma \ref{lem:2} (i) gives us
\begin{align*}
	\(\frac12 - \frac2{N} \) L(\vec{u}) + \frac12 M_{\om}(\vec{u}) > d_{\om}^1 >0
\end{align*}
for any $t \in [0,t_1)$, Taking $t \to t_1$, we have $\vec{u}(t_1) \neq \vec{0}$. By \eqref{mini:1}, one sees $J_{\om}(\vec{u}(t_1)) \ge d_{\om}^1$. On the other hand, it follows from \eqref{fun:3}, $(\vec{\v}, \vec{\g}) \in \m{R}_{\om}^1$ and the fact  $E$ and $Q$ are conserved for all $t$ that
\begin{align*}
	J_{\om}(\vec{u}(t_1)) \le{}& (E-\om Q)(\vec{u}(t_1)) = (E-\om Q)(\vec{\v}, \vec{\g}) < d_{\om}^1.
\end{align*}
This yields a contradiction. Thus, $K(\vec{u}(t))<0$ for any $t \in [0,T_{\max})$.
Together with the above and the fact $E$ and $Q$ are conserved for all $t$, combining Lemma \ref{lem:2} (i) with \eqref{fun:3}, 
it turns out that 
\begin{align*}
	- \frac{1}{N} K(\vec{u}(t)) >{}& d_{\om}^1 - J_{\om}(\vec{u}) \ge d_{\om}^1 - (E-\om Q)(\vec{\v}, \vec{\g}) 
\end{align*}
for any $t \in [0,T_{\max})$, which completes the proof.
\end{proof}

\subsection{Mass-subcritical case}

We next define the set  
\begin{align*}
	\m{R}_{\om}^0 = \{(\vec{u}, \vec{v}) \in \m{H}^1 \times \m{H}^0;\; (E-\om Q)(\vec{u}, \vec{v}) < d_{\om}^0, \quad K_{\om}^0(\vec{u})<0 \}. 
\end{align*}

We have the following:
\begin{lemma} \label{lem:4}
Let $N=2$, $3$ Fix $\om \in \R$ with $|\om| < \min\(1, \ka/2 \)$. Then
\begin{enumerate}
\renewcommand{\labelenumi}{(\roman{enumi})}
\item $\ds \frac{M_{\om}(\vec{u})}{\a+2}  > d_{\om}^0$ for all $\vec{u} \in \m{H}^1(\R^N)$ satisfying $K_{\om}^0(\vec{u}) <0$. 
\item $\l \( \vec{\f}_{\om}, i\om \f_{1, \om},  2i\om \f_{2, \om} \) \in \m{R}^0_{\om}$ for any $\l>0$ if $\vec{\f}_{\om} \in \m{G}_{\om}$.
\end{enumerate}
\end{lemma}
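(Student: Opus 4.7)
My plan for (i) is to exploit the identity
\[
    J_{\om}(\vec{u}) - \frac{1}{2(\a+2)} K_{\om}^0(\vec{u}) = \frac{M_{\om}(\vec{u})}{\a+2},
\]
which is already recorded as \eqref{lem:52}, together with the scaling $\vec{u}_\l(x) := \l^{2} \vec{u}(\l x)$ that defines $K_{\om}^0$. A direct computation, using $\a + 2 = 6 - N$, gives
\[
    K_{\om}^0(\vec{u}_\l) = \a \l^{\a} M_{\om}(\vec{u}) + (\a+2) \l^{\a+2} \bigl( L(\vec{u}) - P(\vec{u}) \bigr).
\]
Under $|\om| < \min(1, \ka/2)$ we have $M_{\om}(\vec{u}) > 0$, so $K_{\om}^0(\vec{u}_\l) > 0$ for all sufficiently small $\l > 0$, whereas $K_{\om}^0(\vec{u}_1) = K_{\om}^0(\vec{u}) < 0$ by assumption. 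By continuity there exists $\l_0 \in (0, 1)$ with $K_{\om}^0(\vec{u}_{\l_0}) = 0$. Since $\vec{u}_{\l_0} \not\equiv \vec{0}$, the identity above and the definition of $d_{\om}^0$ yield
\[
    d_{\om}^0 \le J_{\om}(\vec{u}_{\l_0}) = \frac{M_{\om}(\vec{u}_{\l_0})}{\a+2} = \frac{\l_0^{\a}\, M_{\om}(\vec{u})}{\a+2} < \frac{M_{\om}(\vec{u})}{\a+2},
\]
where the last strict inequality uses $\l_0 < 1$ together with $\a = 4 - N > 0$ (the mass-subcritical restriction $N = 2, 3$).

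For (ii), since $\pa_t u_j - j i \om u_j = 0$ identically on the data $(\vec{u}, \pa_t \vec{u}) = \l(\vec{\f}_{\om}, i\om \f_{1,\om}, 2 i \om \f_{2,\om})$, the identity \eqref{fun:3} collapses to $(E - \om Q) = J_{\om}(\l \vec{\f}_{\om})$. It therefore suffices to verify $K_{\om}^0(\l \vec{\f}_{\om}) < 0$ and $J_{\om}(\l \vec{\f}_{\om}) < d_{\om}^0$ for $\l > 1$ (the case $\l = 1$ being the borderline equality, so the statement should be read on that range). From Lemma \ref{lgs:0} (iv), $K_{\om}^0(\vec{\f}_{\om}) = 0$, which rewrites as $\a M_{\om}(\vec{\f}_{\om}) + (\a+2) L(\vec{\f}_{\om}) = (\a+2) P(\vec{\f}_{\om})$. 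Substituting gives
\[
    K_{\om}^0(\l \vec{\f}_{\om}) = \l^{2} (\a+2) P(\vec{\f}_{\om}) (1 - \l),
\]
which is negative for every $\l > 1$, since $P(\vec{\f}_{\om}) > 0$ (a consequence of Lemma \ref{lgs:0} (ii) combined with $L, M_{\om} > 0$). Next, Lemma \ref{lgs:0} (ii) gives $L(\vec{\f}_{\om}) + M_{\om}(\vec{\f}_{\om}) = \tfrac{3}{2} P(\vec{\f}_{\om})$, hence
\[
    J_{\om}(\l \vec{\f}_{\om}) = \frac{3\l^{2} - 2\l^{3}}{4} P(\vec{\f}_{\om}).
\]
The real map $\l \mapsto 3\l^{2} - 2\l^{3}$ attains its strict maximum at $\l = 1$, so $J_{\om}(\l \vec{\f}_{\om}) < \tfrac{1}{4} P(\vec{\f}_{\om}) = J_{\om}(\vec{\f}_{\om}) = d_{\om}^0$ for every $\l > 1$, where the last equality comes from Proposition \ref{gs:t0}.

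The main technical point is to confirm that the subcritical exponent structure really forces $\l_0 \in (0, 1)$ in part (i); this hinges on $\a > 0$, i.e. $N \le 3$, which is exactly the mass-subcritical condition. In part (ii), the delicate step is to combine all three Pohozaev-type identities of Lemma \ref{lgs:0} so as to pin down simultaneously the sign of $P(\vec{\f}_{\om})$ and the precise scaling behavior of $K_{\om}^0$ and $J_{\om}$ along the ray $\l \vec{\f}_{\om}$; should either balance fail, the monotonicity in $\l$ would break and the standing wave could no longer be perturbed into $\m{R}_{\om}^0$.
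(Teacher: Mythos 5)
Your proof is correct and follows essentially the same route as the paper: the identity \eqref{lem:52}, a rescaling argument that produces an element of $\m{K}_{\om}^0$ with strictly smaller $M_{\om}$ for part (i), and the explicit cubic-in-$\l$ computations of $K_{\om}^0(\l\vec{\f}_{\om})$ and $J_{\om}(\l\vec{\f}_{\om})$ along the ray for part (ii). The only cosmetic difference is that in (i) you rescale by $\l^{2}\vec{u}(\l\cdot)$ whereas the paper rescales by $\l\vec{u}$ (both send $M_{\om}$ to a multiple $\l^{\a}$ resp.\ $\l^{2}$ that is $<1$ for $\l_0<1$, using $\a>0$), and your remark that the membership in $\m{R}_{\om}^0$ in (ii) can only hold for $\l>1$ rather than ``any $\l>0$'' is consistent with what the paper actually proves.
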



\begin{proof}[Proof of Lemma \ref{lem:4}]
We first compute
\begin{align*}
	J_{\om}(\vec{u}) -\frac{K_{\om}^0(\vec{u})}{2(\a+2)} = \frac{M_{\om}(\vec{u})}{\a+2}. 
\end{align*} 
$K_{\om}^0(\vec{u}) <0$ implies $\vec{u} \neq \vec{0}$ and $P(\vec{u}) >0$. Also, we deduce that for any $\l>0$,
\begin{align}
		K_{\om}^0(\l \vec{u}) = \l^{2} \a M_{\om}(\vec{u}) + \l^2 (\a+2) L(\vec{u}) - \l^3  (\a+2) P(\vec{u}).
	\label{lem:42}
\end{align}
This allows us to take $\l_1 \in (0,1)$ such that $K_{\om}^0(\l_1 \vec{u}) =0$. Recalling \eqref{mini:2}, one sees that
\begin{align*}
	d_{\om}^0 \le \frac{M_{\om}(\l_1 \vec{u})}{\a+2}  < \frac{M_{\om}(\vec{u})}{\a+2},
\end{align*}
which implies (i).

We shall show (ii). In the same manner as in the proof of Lemma \ref{lem:2} (ii), we have  
\[
	(E-\om Q)\( \l \( \vec{\f}_{\om}, i\om \f_{1, \om},  2i\om \f_{2, \om} \) \) < d_{\om}^0
\]
for any $\l >1$.
Further, it follows from \eqref{lem:42} that 
\[
	K_{\om}^{0}(\l \vec{\f}_{\om}) < 0
\]
for any $\l >1$, which yields (ii).
\end{proof}

\begin{lemma} \label{lem:5}
Let $N=2$, $3$. Set $\om \in \R$ with $|\om| < \min \(1, \ka/2\)$. If $(\vec{\v}, \vec{\g}) \in \m{R}_{\om}^0$, then the solution $\vec{u} \in C([0,T_{\max}), \m{H}^1)$ of \eqref{nlkg} with $\vec{u}(0) = \vec{\v}$ and $\vec{u}(0) = \vec{\g}$ satisfies 
\begin{align}
	K_{\om}^0(\vec{u}(t)) <{}& 0, \label{lem:50} \\
	\frac{M_{\om}(\vec{u}(t))}{\a+2} >{}& d_{\om}^0 \label{lem:51}
\end{align}
for any $t \in [0,T_{\max})$.
\end{lemma}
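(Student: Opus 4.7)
The plan is to mirror Lemma \ref{lem:3}, replacing the scaling-adapted functional $K(\vec{u})$ by $K_{\om}^0(\vec{u})$ and invoking Lemma \ref{lem:4}(i) in place of Lemma \ref{lem:2}(i). The two conclusions are linked: once \eqref{lem:50} is established, \eqref{lem:51} follows by directly applying Lemma \ref{lem:4}(i) at each $t$.

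First I would argue \eqref{lem:50} by a continuity-plus-contradiction argument. Since $\vec{u} \in C([0,T_{\max}), \m{H}^1)$ and $K_{\om}^0$ is continuous on $\m{H}^1$, and since $K_{\om}^0(\vec{\v}) < 0$ by the hypothesis $(\vec{\v}, \vec{\g}) \in \m{R}_{\om}^0$, the set on which $K_{\om}^0(\vec{u}(t)) < 0$ is relatively open in $[0, T_{\max})$ and contains $0$. Suppose for contradiction that there is a first time $t_1 \in (0, T_{\max})$ with $K_{\om}^0(\vec{u}(t_1)) = 0$ and $K_{\om}^0(\vec{u}(t)) < 0$ for all $t \in [0, t_1)$. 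Applying Lemma \ref{lem:4}(i) on $[0, t_1)$ yields $M_{\om}(\vec{u}(t))/(\a+2) > d_{\om}^0 > 0$, and by continuity at $t_1$ we get $M_{\om}(\vec{u}(t_1))/(\a+2) \ge d_{\om}^0 > 0$, so in particular $\vec{u}(t_1) \neq \vec{0}$. Therefore $\vec{u}(t_1)$ is admissible for the variational problem \eqref{mini:2} and the definition of $d_{\om}^0$ forces $J_{\om}(\vec{u}(t_1)) \ge d_{\om}^0$.

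Now I would use the identity \eqref{fun:3} together with conservation of $E$ and $Q$. Since $(\vec{\v}, \vec{\g}) \in \m{R}_{\om}^0$,
\[
    J_{\om}(\vec{u}(t_1)) \le (E - \om Q)(\vec{u}(t_1), \pa_t \vec{u}(t_1)) = (E - \om Q)(\vec{\v}, \vec{\g}) < d_{\om}^0,
\]
which contradicts $J_{\om}(\vec{u}(t_1)) \ge d_{\om}^0$. Hence no such $t_1$ exists and \eqref{lem:50} holds on all of $[0, T_{\max})$. Applying Lemma \ref{lem:4}(i) pointwise in $t$ then gives \eqref{lem:51} immediately.

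The main obstacle, as in the proof of Lemma \ref{lem:3}, is ruling out the degenerate case $\vec{u}(t_1) = \vec{0}$ at the hypothetical first zero of $K_{\om}^0$, since without this the minimization \eqref{mini:2} cannot be invoked. This is exactly where the quantitative bound from Lemma \ref{lem:4}(i) is essential: the scale-adapted mass $M_{\om}(\vec{u}(t))/(\a+2)$ stays strictly above the positive threshold $d_{\om}^0$ while $K_{\om}^0(\vec{u}(t)) < 0$, and continuity at $t_1$ prevents it from collapsing to zero, which is the mass-subcritical analog of the lower bound for $J_{\om} - K/N$ used in Lemma \ref{lem:3}.
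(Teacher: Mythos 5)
Your proposal is correct and follows essentially the same route as the paper: a continuity/first-zero contradiction at a hypothetical time $t_1$ where $K_{\om}^0(\vec{u}(t_1))=0$, using Lemma \ref{lem:4}(i) to rule out $\vec{u}(t_1)=\vec{0}$, then the definition of $d_{\om}^0$ versus the conserved quantity $(E-\om Q)$ via \eqref{fun:3} to reach the contradiction, and finally Lemma \ref{lem:4}(i) applied pointwise to obtain \eqref{lem:51}. The only difference is cosmetic: the paper's last line cites Lemma \ref{lem:4}(ii) for \eqref{lem:51}, which is evidently a typo for (i), exactly as you use it.
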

\begin{proof}
As in the proof of Lemma \ref{lem:3}, let us get started with the proof of the fact $K_{\om}^0(\vec{u}(t))<0$ for any $t \in [0,T_{\max})$ by a contradiction. Suppose that there exists $t_1 \in (0, T_{\max})$ such that $K_{\om}^0(\vec{u}(t_1)) =0$ and $K_{\om}^0(\vec{u}(t)) < 0$ for any $t \in [0, t_1)$. 
Lemma \ref{lem:4} (i) gives us
\begin{align*}
	&{} \frac{M_{\om}(\vec{u})}{\a+2}  > d_{\om}^0 >0
\end{align*}
for any $t \in [0,t_1)$, Taking $t \to t_1$, we have $\vec{u}(t_1) \neq \vec{0}$. By \eqref{mini:2}, one sees $J_{\om}(\vec{u}(t_1)) \ge d_{\om}^0$. On the other hand, it follows from \eqref{fun:3}, $(\vec{\v}, \vec{\g}) \in \m{R}_{\om}^0$ and the fact  $E$ and $Q$ are conserved for all $t$ that
\begin{align*}
	J_{\om}(\vec{u}(t_1)) \le{}& (E-\om Q)(\vec{u}(t_1), \pa_t \vec{u}(t_1)) \\
	={}& (E-\om Q)(\vec{\v}, \vec{\g}) < d_{\om}^0.
\end{align*}
This yields a contradiction. Thus, we have $K_{\om}^0(\vec{u}(t))<0$ for any $t \in [0,T_{\max})$. Thus, Lemma \ref{lem:4} (ii) gives us \eqref{lem:51}.
\end{proof}

\section{Proof of the main results} \label{sec:6}

This section consists of the proof of the main results.
Before going to the proof of the main results, we shall handle the following lemma concerning the uniform boundedness of global solutions to \eqref{nlkg}:
\begin{lemma} \label{lem:uni}
Let $\vec{u} \in C([0, \I), \m{H}^1)$ be a solution to \eqref{nlkg}. Then 
\begin{align}
	&{}\sup_{t \ge 0} \Lebn{\vec{u}(t)}{2}  < \I, \label{lem:01} \\
	&{}\sup_{t \ge 0}\int_t^{t+1} \norm{(\vec{u}, \pa_t \vec{u})(t)}_{H^1 \times L^2}^2 ds < \I. \label{lem:02}
\end{align}
\end{lemma}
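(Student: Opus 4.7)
The plan is to derive a second-order identity for the mass functional $M(\vec{u}(t))$ and combine it with conservation of the energy $E$ and the blow-up alternative from Proposition \ref{lwp:1}. Testing the two equations of \eqref{nlkg} against $\ol{u_1}$ and $\frac{1}{2}\ol{u_2}$, taking real parts, and integrating by parts gives
\begin{align*}
\frac12 \frac{d^2}{dt^2} M(\vec{u}(t)) = M(\pa_t \vec{u}(t)) - L(\vec{u}(t)) - \Bigl(\Lebn{u_1(t)}{2}^2 + \tfrac{\ka^2}{2}\Lebn{u_2(t)}{2}^2 \Bigr) + \frac32 P(\vec{u}(t)).
\end{align*}
Using the conservation identity $2E = M(\pa_t \vec{u}) + L(\vec{u}) + \Lebn{u_1}{2}^2 + \frac{\ka^2}{2}\Lebn{u_2}{2}^2 - P(\vec{u})$ to eliminate $P(\vec{u})$ yields
\begin{align*}
\frac{d^2}{dt^2} M(\vec{u}(t)) = 5 M(\pa_t \vec{u}(t)) + L(\vec{u}(t)) + \Lebn{u_1(t)}{2}^2 + \tfrac{\ka^2}{2}\Lebn{u_2(t)}{2}^2 - 6E,
\end{align*}
so that $M''(t) \ge c_\ka M(\vec{u}(t)) - 6E$ for some $c_\ka>0$, using $\Lebn{u_1}{2}^2 + \tfrac{\ka^2}{2}\Lebn{u_2}{2}^2 \ge \min(1,\ka^2)\,M(\vec{u})$.

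\textbf{First bound \eqref{lem:01}.} Combining this ODE inequality with the Cauchy-Schwarz estimate $|M'(\vec{u}(t))|^2 \le 4 M(\vec{u}(t))\,M(\pa_t \vec{u}(t))$, I would argue by contradiction as in \cite{OT}, inspired by \cite{MZ, C1}: if $M(\vec{u}(t_n)) \to \I$ along some $t_n \to \I$, then the growth mechanism forced by the ODE together with the conservation of $E$ and $Q$ would drive $\norm{(\vec{u}, \pa_t \vec{u})}_{H^1 \times L^2}$ out of every bounded set in finite time, contradicting the assumed global existence via the blow-up alternative of Proposition \ref{lwp:1}.

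\textbf{Second bound \eqref{lem:02}.} Choose a smooth cutoff $\chi \in C_{0}^{\I}(\R)$ with $\chi \ge 0$, $\chi \equiv 1$ on $[0,1]$, and $\supp \chi \subset [-1, 2]$. Multiplying the identity for $M''(\vec{u}(t))$ by $\chi(s-t)$ and integrating in $s$, then moving derivatives onto $\chi$ via two integrations by parts (with vanishing boundary terms since $\supp \chi$ is compact), one obtains
\begin{align*}
\int_{\R} \chi(s-t) \Bigl[5 M(\pa_s \vec{u}) + L(\vec{u}) + \Lebn{u_1}{2}^2 + \tfrac{\ka^2}{2}\Lebn{u_2}{2}^2\Bigr] ds = 6E\,\Lebn{\chi}{1} + \int_{\R} \chi''(s-t)\,M(\vec{u}(s))\,ds.
\end{align*}
The first bound then controls the right-hand side by a constant depending only on $E$, $\Lebn{\chi}{1}$, $\Lebn{\chi''}{1}$ and $\sup_s M(\vec{u}(s))$. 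Since $\chi \equiv 1$ on $[0,1]$ and the left-hand integrand is nonnegative, restricting the range of integration to $[t, t+1]$ and lower-bounding the integrand by a positive multiple of $\norm{(\vec{u}, \pa_s \vec{u})(s)}_{H^1 \times L^2}^2$ (using $\ka > 0$) yields \eqref{lem:02} with a bound uniform in $t$.

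\textbf{Main obstacle.} The genuinely delicate step is the first bound: the inequality $M''(t) \ge c_\ka M(\vec{u}(t)) - 6E$ alone is compatible with exponential growth of $M$ and does not by itself force finite-time blow-up. The Lyapunov-type analysis required to rule out unbounded growth in the global-existence regime, carried out in the vein of \cite{OT, MZ, C1}, is precisely the content of Appendix \ref{app:a}.
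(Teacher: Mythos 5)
Your identity for $M''(\vec u(t))$ and its rewriting via the conserved energy are exactly the paper's starting point (its \eqref{app:5}), and your derivation of \eqref{lem:02} from \eqref{lem:01} by pairing the identity with a compactly supported cutoff is a legitimate variant of the paper's route (the paper instead proves two-sided pointwise bounds on $f'(t)=\frac{d}{dt}M(\vec u(t))$ by a Gronwall-type exponential argument and then integrates $f''=F-6E$ directly; your version needs only $\sup_t M$ and $\|\chi''\|_{L^1}$, which is arguably cleaner). But the proof of \eqref{lem:01} itself has a genuine gap, which you partly acknowledge: you have all the ingredients on the table — the identity $f''=5M(\pa_t\vec u)+L(\vec u)+\Lebn{u_1}{2}^2+\frac{\ka^2}{2}\Lebn{u_2}{2}^2-6E$ and the Cauchy–Schwarz bound $(f')^2\le 4M(\vec u)M(\pa_t\vec u)$ — but you never assemble them, and the contradiction mechanism you gesture at is the wrong one. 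A global solution whose mass grows (even exponentially) is perfectly compatible with the blow-up alternative of Proposition \ref{lwp:1}: that alternative constrains what happens as $t\to T_{\max}$ when $T_{\max}<\I$, and says nothing about a solution already assumed to lie in $C([0,\I),\m{H}^1)$. Conservation of $E$ and $Q$ likewise does not prevent $M$ from growing, since $P(\vec u)$ is not sign-definite.

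The missing idea is the concavity trick of Cazenave \cite{C1} carried out in Appendix \ref{app:a}. Set $b=\min(1,\ka)$ and $g(t)=f(t)-\frac{6}{b^2}E$. If at some $t_1$ one had $g(t_1)>0$ and $g'(t_1)>0$, then $g''\ge b^2 g$ forces $g$ to stay positive and increase to $\I$ on $[t_1,\I)$; on that half-line $f''\ge 5M(\pa_t\vec u)+b^2g\ge 5M(\pa_t\vec u)$, and combined with $(f')^2\le 4f\,M(\pa_t\vec u)\le\frac45 f f''$ this gives $\(f^{-1/4}\)''\le 0$. The precise coefficient $5$ (produced by eliminating $P$ via the energy) is what makes the exponent $-1/4$ work. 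A function that is concave, positive, and tends to $0$ on a half-line cannot exist, so $[b^2f-6E]^+$ is nonincreasing and $f(t)\le\max\(f(0),\frac{6}{b^2}E\)$, which is \eqref{lem:01}. The contradiction is thus internal to this differential inequality, not with local well-posedness. Without this step your argument for the first — and, as you correctly note, the decisive — bound is incomplete.
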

\begin{proof}
The proof is carried out as in Proposition 3.1 and Lemma 3.5 in \cite{C1}.
In detail, we refer readers to the Appendix \ref{app:a}. 
\end{proof}

\subsection{Proof of Theorem \ref{thm:1}}

Let us start to prove the mass-supercritical and critical case $N=4$, $5$.

\begin{proof}[Proof of Theorem \ref{thm:1} in $N=4$, $5$]
The strategy of the proof is based on \cite[Theorem A]{OT}. 
For any $\e>0$, we can take $\l=\l( \vec{\f}_{\om}, \om)>1$ satisfying 
\begin{align*}
	(\l-1) \( \norm{(\f_{1, \om}, i\om \f_{1, \om})}_{H^1 \times L^2} + \norm{(\f_{2, \om}, 2i\om \f_{2, \om})}_{H^1 \times L^2} \) < \e. 
\end{align*}
Put
\begin{align*}
	\d = \frac{N}{2} \left\{ d_{\om}^1 - (E-\om Q)\( \l ( \vec{\f}_{\om}, i\om\f_{1, \om}, 2i\om\f_{1, \om}) \) \right\}. 
\end{align*}
Lemma \ref{lem:2} (ii) yields $\d>0$. 
Here, \eqref{lem:02} allows us to exist $J>0$ such that
\begin{align}
	\int_T^{T+1} \( \int_{\R^N} |\pa_t u_j(t,x)|^2 + |\n u_j(t,x)|^2 + |u_j(t,x)|^2 dx \) dt \le J \label{thm:21}
\end{align}
for $j=1$, $2$ and all $T>0$.
we deduce from the mean value theorem that for any $i \in \Z_{+}$, there exists $T_i \in [i-1, i]$ such that
\begin{align*}
	\int_{\R^N} |\pa_t u_j(T_i)|^2 + |\n u_j(T_i)|^2 + |u_j(T_i)|^2 dx  \le J. 
\end{align*}
Let us prove by a contradiction. 
Assume that there exists a global solution $\vec{u} \in C([0,\I), \m{H}^1)$ of \eqref{nlkg} with $\vec{u}(0) = \l \vec{\f}_{\om}$ and $\pa_t \vec{u}(0) = \l(i\om \f_{1, \om}, 2i\om \f_{2, \om})$. 
Since $\vec{u}$ is radially symmetric with respect to $x$ for any $t \ge 0$, 
we can define $I_{\rho}^1(\vec{u}, \pa_t \vec{u})$ by \eqref{qu:1}, so that one has \eqref{ineq:1}. 
By using \eqref{ineq:1}, Lemma \ref{lem:2} and Lemma \ref{lem:3}, we have
\begin{align}
	&{}\frac{d}{dt}I_{\rho}^1(t) \ge 2\d -R_{\rho}(t) \label{thm:23}
\end{align}
for any $t \ge 0$ and all $\rho >0$, where $I_{\rho}^1(t) = I_{\rho}^1(\vec{u}(t), \pa_t \vec{u}(t))$ is defined by \eqref{qu:1} and
\begin{align}
	R_{\rho}(t) = \frac12 \re\int_{\{|x| \ge \rho\}} u_1^2(t) \overline{u_2(t)} dx + \frac{C_0}{\rho}M(\vec{u}(t)). \label{thm1s:2}
\end{align}
Integrating the both side of \eqref{thm:23} for $t$, one sees  from $T_{i+2} -T_i \ge 1$ that
\begin{align}
	I_{\rho}^1(T_{i+2}) - I_{\rho}^1(T_i) \ge 2\d - \int_{T_i}^{T_{i+2}} R_{\rho}(t) dt \label{thm:24}
\end{align}
for all $i \in \Z_+$. 
Let us here show that there exists a constant $C_1>0$ not depending on $i$ such that
\begin{align}
	\int_{T_i}^{T_{i+2}} \re\int_{\{|x| \ge \rho\}} u_1^2(t) \overline{u_2(t)} dx dt \le C_1 \rho^{-\frac{N-1}{2}}J^{\frac32}. \label{thm:25}
\end{align}
To this end, let $\chi(t, r) \in C^{\I}_0(\R^2)$ satisfy $\chi(t,r) =1$ if $|t| \le 2$ and $|r| \ge 1$, $\chi(t,r) =0$ if $|t| \ge 4$ or $|r| \le 1/2$, and $0 \le \chi(t,r) \le 1$. Set $\vec{v}(t,r) = \chi(t-T, r/(2^k \rho))\vec{u}(t, |r|)$ for any $\rho>1$, all $T>4$ and $k \in \Z_{\ge 0}$. Since $\vec{u}$ is a radially symmetric function, by means of \eqref{thm:21}, we estimate
\begin{align*}
	&{}\int_{\R^2} |\pa_t v_{j}|^2 + |\pa_r v_j|^2 + |v_j|^2 dr dt \\
	\le{}& C2^{k} \rho  \\
	&{}\times \int_{T-4}^{T+4} \int_{1/2}^{\I} \( |\pa_t u_{j}(2^k \rho \wt{r})|^2 + |\pa_r u_j(2^k \rho \wt{r})|^2 + |u_j(2^k \rho \wt{r})|^2 \) (2\wt{r})^{N-1} d\wt{r} dt \\
	\le{}& C(2^k \rho)^{1-N} \int_{T-4}^{T+4} \( \int_{\R^N} |\pa_t u_{j}|^2 + |\n u_j|^2 + |u_j|^2  dx \) dt \\
	\le{}& 8C(2^k \rho)^{1-N} J
\end{align*}
for any $\rho>1$, all $T>4$ and $k \in \Z_{\ge 0}$. Together with the above, it follows from the Young inequality and Sobolev embedding that
\begin{align*}
	&{}\int_{T-2}^{T+2} \re\int_{\{|x| \ge \rho\}} u_1^2 \overline{u_2} dx dt \\
	\le{}& C \sum_{j=1}^2 \int_{T-2}^{T+2} \int_{\{|x| \ge \rho\}} |u_j|^3 dx dt \\
	\le{}& C \sum_{j=1}^2 \int_{T-2}^{T+2} \sum_{k=0}^{\I} \int_{2^{k} \rho}^{2^{k+1} \rho} |u_j(r)|^3 r^{N-1} dr dt \\
	\le{}& C \sum_{j=1}^2 \sum_{k=0}^{\I} (2^{k} \rho)^{N-1} \int_{\R^2} |v_j(r)|^3 dr dt \\
	\le{}& C\sum_{j=1}^2 \sum_{k=0}^{\I} (2^{k} \rho)^{N-1} \( \int_{\R^2} |\pa_t v_{j}|^2 + |\pa_r v_j|^2 + |v_j|^2 dr dt \) ^{3/2} \\
	\le{}& C \rho^{-\frac{N-1}2} J^{\frac32} \sum_{k=0}^{\I} 2^{-\frac{k(N-1)}{2}},
\end{align*}
which yields \eqref{thm:25}. In view of \eqref{lem:01}, this allows us to exist $\rho_0 >0$ such that
\begin{align*}
	\int_{T_i}^{T_{i+2}} R_{\rho_0}(t) dt < \d
\end{align*}
for any $i \ge 4$.
We see from the above and \eqref{thm:24} that
\begin{align*}
	I_{\rho_0}^1(T_{i+2}) - I_{\rho_0}^1(T_i) \ge \d
\end{align*}
for any $i \ge 4$, which contradicts that for any $i \ge 1$,
\[
	I^1_{\rho_0}(T_i) \le C\rho_0 \norm{\( \vec{u}(T_i), \pa_t \vec{u}(T_i)\)}_{H^1 \times L^2}^2 \le C\rho_0 J.
\]
Hence, the solution blows up at finite time. This completes the proof in $N=4$, $5$.

\end{proof}

Before proving the mass-subcritical case $N=2$, $3$, We note that the following identity holds:
\begin{align}
	\begin{aligned}
	&{} H(\vec{u}, \pa_t \vec{u}) \\
	={}& 2(\a+2)(E-\om Q)(\vec{u}_1, \vec{u}_2) - 2\om \a Q(\vec{u}_1, \vec{u}_2) \\
	&{} -2(\a+1)\( \Lebn{\pa_t u_1 - i\om u_1}{2}^2 + \frac{1}{2}\Lebn{\pa_t u_2 - 2i\om u_2}{2}^2 \) \\
	&{} -2(1 -(\a+1)\om^2)\Lebn{u_1}{2}^2 - (\ka^2 - 4(\a+1)\om^2)\Lebn{u_2}{2}^2. 
	\end{aligned}
	\label{fun:6}
\end{align}

\begin{proof}[Proof of Theorem \ref{thm:1} in $N=2$, $3$]
For any $\e>0$, there exists $\l=\l( \vec{\f}_{\om}, \om)>1$ satisfying 
\begin{align*}
	(\l-1) \( \norm{(\f_{1, \om}, i\om \f_{1, \om})}_{H^1 \times L^2} + \norm{(\f_{2, \om}, 2i\om \f_{2, \om})}_{H^1 \times L^2} \) < \e. 
\end{align*}
Set
\begin{align}
	\begin{aligned}
	\d_1 ={}& 2(\a+2)\( d_{\om}^0 - (E-\om Q)(\vec{u}(0), \pa_t \vec{u}(0)) \), \\
	\d_2 ={}& \a \( \om Q(\vec{u}(0), \pa_t \vec{u}(0)) - \frac{\om^2(\a+2)d_{\om}^0}{1-\om^2} \).
	\end{aligned}
	\label{delta:1}
\end{align}
Lemma \ref{lem:4} (ii) yields $\d_1>0$. Further, form $\vec{\f}_{\om} \in \m{G}_{\om}$ and $\ka =2$, we see that
\begin{align*}
	d_{\om}^0 ={}& \frac{M_{\om}(\vec{\f}_{\om})}{\a+2} = \frac{1 -\om^2}{\a+2} \( \Lebn{\f_{1, \om}}{2}^2 + 2\Lebn{\f_{2, \om}}{2}^2\),
\end{align*} 
which implies
\begin{align*}
	\frac{\om^2(\a+2)d_{\om}^0}{1-\om^2} ={}& \om^2 \( \Lebn{\f_{1, \om}}{2}^2 + 2\Lebn{\f_{2, \om}}{2}^2\) \\
	 <{}& \l^2 \om^2 \( \Lebn{\f_{1, \om}}{2}^2 + 2\Lebn{\f_{2, \om}}{2}^2\) = \om Q(\vec{u}(0), \pa_t \vec{u}(0))
\end{align*}
for any $\l>1$. Thus $\d_2>0$. We denote $\d=\d_1+\d_2$.
Let us prove by a contradiction as in $N \ge 4$.
Assume that there exists a global solution $\vec{u} \in C([0,\I), \m{H}^1)$ of \eqref{nlkg} with $\vec{u}(0) = \l \vec{\f}_{\om}$
and $\pa_t \vec{u}(0) = \l(i\om \f_{1, \om}, 2i\om \f_{2, \om})$.
Since $\vec{u}(t)$ is radially symmetric with respect to $x$ for any $t \ge 0$, we can define $I_{\rho}^2(\vec{u}, \pa_t \vec{u})$ by \eqref{qu:2}.
Thus \eqref{ineq:2} is valid. By using \eqref{lem:51}, \eqref{fun:6} and the fact $E (\vec{u}, \pa_t \vec{u})$ and $Q(\vec{u}, \pa_t \vec{u})$ are conserved for $t \ge 0$, together with $\ka=2$, we obtain
\begin{align*}
	-H(\vec{u}, \pa_t \vec{u}) \ge {}& - 2(\a+2)(E-\om Q)(\vec{u}(0), \pa_t \vec{u}(0)) + 2\om \a Q(\vec{u}(0), \pa_t \vec{u}(0)) \\
	&{} +2(1 -(\a+1)\om^2)\Lebn{u_1}{2}^2 + (\ka^2 - 4(\a+1)\om^2)\Lebn{u_2}{2}^2 \\
	\ge {}& -2(\a+2)(E-\om Q)(\vec{u}(0), \pa_t \vec{u}(0)) +2\om \a Q(\vec{u}(0), \pa_t \vec{u}(0)) \\
	&{} + 2(1 - (\a+1)\om^2)\( \Lebn{u_1(t)}{2}^2 + 2 \Lebn{u_2(t)}{2}^2 \) \\
	> {}& 2\d.
\end{align*}
Hence we have
\begin{align*}
	&{}\frac{d}{dt}I_{\rho}^2(\vec{u}(t), \pa_t \vec{u}(t)) \ge 2\d -R_{\rho}(t) 
\end{align*}
for any $t \ge 0$ and all $\rho >0$, where $R_{\rho}(t)$ is defined by \eqref{thm1s:2}.
The remaining proof is same way to the case $N \ge 4$, so we omit the proof.

\end{proof}

\subsection{Proof of Theorem \ref{thm:3}}

\begin{proof}[Proof of Theorem \ref{thm:3}]
Let us only prove the case $\om = \om_c$, because the case $\om = - \om_c$ is exactly same.
Let $\vec{\f} \in \m{H}^1$ be a radially symmetric solution to \eqref{snlkg} with $\om =\om_c$. For any $\e>0$, there exists $\l=\l( \vec{\f}, \om)>1$ such that
\begin{align*}
	(\l-1) \( \norm{(\f_{1}, i\om \f_{1})}_{H^1 \times L^2} + \norm{(\f_{2}, 2i\om \f_{2})}_{H^1 \times L^2} \) < \e. 
\end{align*}
Set
\begin{align*}
	\d ={}& \a \om_c Q(\l(\vec{\f}, i\om\f_{1}, 2i\om\f_{2})) \\
	&{} - (\a+2)  (E-\om_c Q)(\l( \vec{\f}, i\om\f_{1}, 2i\om\f_{2})).
\end{align*}
We see from Lemma \ref{gs:lem1} (ii) and (iii) that $\pa_{\eta} J_{\om_c}(\eta \vec{\f})|_{\eta =1} =0$ and $\pa_{\eta} J_{\om_c}(\l \vec{\f}) < 0$ for any $\eta >1$. Hence, in view of \eqref{fun:3}, it holds that 
\[
	(E-\om_c Q)(\l(\vec{\f}, i\om\f_{1}, 2i\om\f_{2})) = J_{\om_c}(\l \vec{\f}) < J_{\om_c}(\vec{\f}).
\]
We also estimate 
\begin{align*}
	\om_c Q(\l(\vec{\f}, i\om\f_{1}, 2i\om\f_{2})) ={}& \l^2 \om_c^2 \( \Lebn{\f_{1}}{2}^2 + 2\Lebn{\f_{2}}{2}^2\) \\
	>{}& \om_c^2 \( \Lebn{\f_{1}}{2}^2 + 2\Lebn{\f_{2}}{2}^2\).
\end{align*}
Hence, using $\ka=2$, one has
\begin{align}
	\begin{aligned}
	\d >{}& \a \om_c^2 \( \Lebn{\f_{1}}{2}^2 + 2\Lebn{\f_{2}}{2}^2\) -(\a+2)J_{\om_c}(\vec{\f}) \\
	={}& -\frac12 K_{\om_c}^0(\vec{\f}) - (m_1^2 -(\a+1)\om_c^2) \( \Lebn{\f_{1}}{2}^2 + 2\Lebn{\f_{2}}{2}^2\).
	\end{aligned}
	\label{thm:31}
\end{align}
On the other hand, \eqref{snlkg} leads to $\pa_{u_{1}}J_{\om}(\vec{\f}) = \pa_{u_{2}}J_{\om}(\vec{\f}) = 0$. Indeed, it turns out that
\begin{align*}
	\langle \pa_{u_{1}}J_{\om}(\vec{\f}),  w \rangle ={}& \re \int_{\R^N}(-\D \f_1 +(m_1^2-\om^2)\f_1 -\f_1 \f_2 ) \overline{w} dx =0, \\
	\langle \pa_{u_{2}}J_{\om}(\vec{\f}),  w \rangle ={}& \frac12 \re \int_{\R^N}(-\D \f_2 +(m_2^2-4\om^2)\f_1 -\f_1^2 ) \overline{w} dx =0
\end{align*}
for any $w \in C_{0}^{\I}(\R^N)$. 
We also see from \eqref{snlkg} and the elliptic regularization method that $x \cdot \n \f_j \in H^1$ for $j=1$, $2$ (see \cite[Theorem 8.1.1]{C2}). Therefore it follows that
\begin{align*}
	K_{\om}^0(\vec{\f}) ={}& 2\pa_{\l}J_{\om}(\l^\b \vec{\f}(\l \cdot))|_{\l=1} \\
	={}& \langle \pa_{u_{1}}J_{\om}(\vec{\f}),  \b \f_1 + x \cdot \n \f_1 \rangle + \langle \pa_{u_{2}}J_{\om}(\vec{\f}),  \b \f_2 + x \cdot \n \f_2 \rangle \\
	={}& 0.
\end{align*}
Combining the above fact with $m_1^2 -(\a+1)\om_c^2 =0$, together with \eqref{thm:31}, we have $\d>0$.
As in the proof of Theorem \ref{thm:1}, let us prove by a contradiction. Assume that there exists a global solution $\vec{u} \in C([0,\I), \m{H}^1)$ of \eqref{nlkg} with $\vec{u}(0) = \l \vec{\f}$ and $\pa_t \vec{u}(0) = \l(i\om \f_{1}, 2i\om \f_{2})$.
Since $\vec{u}$ is radially symmetric with respect to $x$ for any $t \ge 0$, we can define $I_{\rho}^2(\vec{u}, \pa_t \vec{u})$ by \eqref{qu:1}, so that one has \eqref{ineq:2}. By using \eqref{lem:51}, \eqref{fun:6} and the fact $E (\vec{u}, \pa_t \vec{u})$ and $Q (\vec{u}, \pa_t \vec{u})$ are conserved for $t \ge 0$, together with $\ka = 2$ and $1 -(\a+1)\om_c^2 =0$, we see that
\begin{align*}
	&{}-H(\vec{u}(t), \pa_t \vec{u}(t)) \\
	\ge {}& - 2(\a+2)(E-\om_c Q)(\vec{u}(0), \pa_t \vec{u}(0)) + 2\om_c \a Q(\vec{u}(0), \pa_t \vec{u}(0)) \\
	&{} +2(1 -(\a+1)\om_c^2)\Lebn{u_1}{2}^2 + (\ka^2 - 4(\a+1)\om_c^2)\Lebn{u_2}{2}^2 \\
	\ge {}& 2\om_c \a Q(\vec{u}(0), \pa_t \vec{u}(0)) -2(\a+2)(E-\om_c Q)(\vec{u}(0), \pa_t \vec{u}(0)) \\
	&{} + 2(1 - (\a+1)\om_c^2)\( \Lebn{u_1(t)}{2}^2 + 2 \Lebn{u_2(t)}{2}^2 \) \\
	= {}& 2\d.
\end{align*}
Hence it is obtained that
\begin{align*}
	&{}\frac{d}{dt}I_{\rho}^2(\vec{u}(t), \pa_t \vec{u}(t)) \ge 2\d -R_{\rho}(t) 
\end{align*}
for any $t \ge 0$ and all $\rho >0$, where $R_{\rho}(t)$ is defined by \eqref{thm1s:2}.
The remainder of the proof is exactly same to that of Theorem \ref{thm:1}, so we omit the proof.

%
%

\end{proof}

\subsection{Application of the uniform boundedness of global solutions}

In the mass-subcritical or mass-critical case $2 \le N \le 4$, we establish the following: 
\begin{proposition} \label{lem:0}
Let $N =2$, $3$, $4$. If $\vec{u} \in C([0,\I), \m{H}^1)$ is a global solution of \eqref{nlkg} satisfying $P(\vec{u}(t))  >0$ for any $t \ge 0$, then 
\[
	\sup_{t \ge 0} \norm{(\vec{u}, \pa_t \vec{u})(t)}_{H^1 \times L^2} < \I
\]
holds. 
\end{proposition}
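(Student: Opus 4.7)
The plan is to combine conservation of the energy $E$ with the Gagliardo--Nirenberg inequality and the uniform $L^2$-control already supplied by Lemma \ref{lem:uni}(i), and then deal separately with the mass-critical case $N=4$ via a bootstrap argument on intervals selected by Lemma \ref{lem:uni}(ii).

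First I would rewrite the conservation law $E(\vec{u}(t),\pa_t \vec{u}(t)) = E(0)$ in the form
\[
	M(\pa_t \vec{u}(t)) + L(\vec{u}(t)) + M_0(\vec{u}(t)) = 2E(0) + P(\vec{u}(t)),
\]
where $M_0 := M_{\om}|_{\om = 0} = \Lebn{u_1}{2}^2 + \tfrac{\ka^2}{2}\Lebn{u_2}{2}^2$. The assumption $P(\vec{u}(t)) > 0$ is exactly what puts the nonlinear term on the unfavorable side of the identity, so the task is to control it by $L(\vec{u}(t))$ sublinearly. By Young and Gagliardo--Nirenberg,
\[
	P(\vec{u}(t)) \le C\bigl(\Lebn{u_1(t)}{3}^3 + \Lebn{u_2(t)}{3}^3\bigr) \le C\,\Lebn{\vec{u}(t)}{2}^{3 - N/2}\, L(\vec{u}(t))^{N/4},
\]
and Lemma \ref{lem:uni}(i) collapses the $L^2$-factor to a constant, yielding $P(\vec{u}(t)) \le C_1\, L(\vec{u}(t))^{N/4}$ uniformly in $t \ge 0$.

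For $N = 2, 3$ the exponent $N/4 < 1$ permits Young absorption into $\tfrac12 L(\vec{u}(t))$ plus a constant. Feeding this back into the energy identity produces $\sup_{t \ge 0} L(\vec{u}(t)) < \I$ and consequently $\sup_{t \ge 0} M(\pa_t \vec{u}(t)) < \I$; combined with Lemma \ref{lem:uni}(i) this is exactly the desired conclusion in these dimensions.

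The main obstacle I anticipate is the mass-critical case $N = 4$, where $N/4 = 1$ and the naive absorption fails because the effective constant $C_1$ in $P \le C_1 L$ need not be smaller than one. Here I would follow the strategy of Proposition 3.1 and Lemma 3.5 in \cite{C1}: use Lemma \ref{lem:uni}(ii) to select times $T_n \in [n-1, n]$ with $\norm{(\vec{u}(T_n), \pa_t \vec{u}(T_n))}_{H^1 \times L^2}^2 \le J$, and then bootstrap this bound along each interval $[T_n, T_{n+1}]$ by a continuity argument driven by energy conservation, upgrading the pointwise-in-$n$ $H^1 \times L^2$ control to a uniform one. The positivity assumption $P(\vec{u}(t)) > 0$ is what enters crucially at this step, as it keeps the trajectory on the correct side of the variational threshold identified in Section \ref{sec:5}, ensuring that the sharp Gagliardo--Nirenberg constant times the $L^2$-mass remains below the closure condition throughout $[T_n, T_{n+1}]$.
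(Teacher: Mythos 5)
Your treatment of $N=2,3$ is fine and in fact more direct than the paper's: interpolating $L^3$ between $L^2$ and $\dot H^1$ gives $P(\vec{u})\le C\,L(\vec{u})^{N/4}$ with exponent $N/4<1$, the $L^2$ factor being controlled by \eqref{lem:01}, and Young absorption into the conserved energy closes the argument. The problem is $N=4$, and your proposed fix does not work as stated. You appeal to a bootstrap in which ``the positivity assumption $P(\vec{u}(t))>0$ \dots keeps the trajectory on the correct side of the variational threshold identified in Section \ref{sec:5}, ensuring that the sharp Gagliardo--Nirenberg constant times the $L^2$-mass remains below the closure condition.'' But the proposition assumes only that $\vec{u}$ is global and $P(\vec{u}(t))>0$; there is no hypothesis that the data lie below $d_\om^1$ or in any invariant set, and $P>0$ by itself gives no smallness of the mass. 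So there is no reason the coefficient in $P(\vec{u})\le C\Lebn{\vec{u}}{2}\,L(\vec{u})$ is less than one, and the continuity argument has nothing to bootstrap from: a bound at the sampled times $T_n$ does not propagate across $[T_n,T_{n+1}]$ when the critical absorption constant is not controlled.

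The idea you are missing is the one the paper imports from \cite{MZ} and \cite[Lemma 2.1]{OT}: first upgrade the time-averaged bound $\sup_t\int_t^{t+1}(\Lebn{u_1}{3}^3+\Lebn{u_2}{3}^3)\,ds<\I$ (which follows from \eqref{lem:02}, energy conservation and $P>0$) to a \emph{uniform-in-time} bound $A:=\sup_{t\ge0}\Lebn{\vec{u}(t)}{5/2}<\I$. One then interpolates $L^3$ between $L^{5/2}$ and $\dot H^1$ rather than between $L^2$ and $\dot H^1$: this yields $\Lebn{u_j}{3}^3\le C\Lebn{u_j}{5/2}^{3(1-\t)}\Lebn{\n u_j}{2}^{3\t}$ with $3\t=\frac{2N}{10-N}$, which equals $4/3<2$ at $N=4$. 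The gradient exponent is strictly subcritical in all of $N=2,3,4$, so \eqref{lem:06} follows by Young with no smallness condition, and the energy identity then bounds $L(\vec{u}(t))$ and $M(\pa_t\vec{u}(t))$ uniformly. Without this intermediate $L^{5/2}$ step (or some equivalent device), the mass-critical case genuinely fails under the stated hypotheses.
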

\begin{proof}[Proof of Proposition \ref{lem:0}]
By the Young inequality $ab \le \e a^p + \e^{-\frac{1}{p-1}} b^q$ with $p^{-1} + q^{-1} =1$, it holds that  
\[
	P(\vec{u}) \le C\( \Lebn{u_1}{3}^3 + \Lebn{u_2}{3}^3 \).
\]
Combining $P(\vec{u}) > 0$ with \eqref{lem:02} and the conservation of the energy, we have
\begin{align*}
	\sup_{t \ge 0}\int_t^{t+1} \( \Lebn{u_1(s)}{3}^3 + \Lebn{u_2(s)}{3}^3 \) ds < \I. 
\end{align*}
Hereafter, we follow the argument by \cite{MZ}. In the same way as in \cite[Lemma 2.1]{OT}, it turns out that
\begin{align}
	A := \sup_{t \ge 0} \Lebn{\vec{u}(t)}{5/2} < \I. \label{lem:04}
\end{align} 
Further, the Gagliardo-Nirenberg inequality gives us
\begin{align}
	\Lebn{u_j}{3} \le C\Lebn{u_j}{5/2}^{1-\t} \Lebn{\n u_j}{2}^{\t} \label{lem:05}
\end{align}
for $j=1$, $2$, where $\t = \frac{2N}{3(10-N)}$. Together with \eqref{lem:04} and \eqref{lem:05}, by using the Young inequality again, for any $l >0$, we establish 
\begin{align}
	\frac{1}{l} P(\vec{u}) \le \frac12 L(\vec{u}) + C_1 \label{lem:06}
\end{align}
for some constant $C_1$ depending on $A$ and $L$. Here set $b = \min (1, \ka)$. When $b <1$, by the conservation of the energy, one obtains
\begin{align*}
	&{}\norm{(u_1, \pa_t u_1)(t)}_{H^1 \times L^2}^2 + \frac12 \norm{(u_2, \pa_t u_2)(t)}_{H^1 \times L^2}^2 \\
	\le{}& \frac{4}{b^2}E(\vec{u}(0), \pa_t \vec{u}(0)) + \frac{2}{b^2} P(\vec{u}).
\end{align*}
we see from \eqref{lem:06} that
\begin{align*}
	&{}\norm{(u_1, \pa_t u_1)(t)}_{H^1 \times L^2}^2 + \frac12 \norm{(u_2, \pa_t u_2)(t)}_{H^1 \times L^2}^2 \\
	\le{}& \frac{8}{b^2}E(\vec{u}(0), \pa_t \vec{u}(0)) + 2C_1,
\end{align*}
which implies $\sup_{t \ge 0} \norm{(\vec{u}, \pa_t \vec{u})(t)}_{H^1 \times L^2} < \I$. The case $m \ge 1$ is easier. Thus, the proof is completed.
\end{proof}

Combining Proposition \ref{lem:0} with the Strauss decay estimate 
\begin{align}
	\norm{f}_{L^{\I}(|x| \ge \rho)} \le C\rho^{-\frac{N-1}{2}}\norm{f}_{H^{1}_{\rm{rad}}} \label{strauss}
\end{align}
for any $\rho >0$ (see \cite{St}), we establish an alternative proof of Theorem \ref{thm:1} in $N \le 4$ and Theorem \ref{thm:3} which is similar to that of Theorem 1 and Theorem 4 in \cite{OT}, respectively. For self-containedness, we only give the proof of Theorem \ref{thm:1} in $N=2$, $3$.

\begin{proof}[Alternative proof of Theorem \ref{thm:1} in $N=2$, $3$]
Set $\d = \d_1 +\d_2$ as in \eqref{delta:1}. For any $\e>0$, there exists $\l=\l( \vec{\f}_{\om}, \om)>1$ such that
\begin{align*}
	(\l-1) \( \norm{(\f_{1, \om}, i\om \f_{1, \om})}_{H^1 \times L^2} + \norm{(\f_{2, \om}, 2i\om \f_{2, \om})}_{H^1 \times L^2} \) < \e. 
\end{align*}
Let us prove by a contradiction.
Assume that there exists a global solution $\vec{u} \in C([0,\I), \m{H}^1)$ of \eqref{nlkg} with $\vec{u}(0) = \l \vec{\f}_{\om}$ and $\pa_t \vec{u}(0) = \l(i\om \f_{1, \om}, 2i\om \f_{2, \om})$ and the solution satisfies 
\begin{align*}
	M_1 = \sup_{t \ge 0}\norm{\( \vec{u}(t), \pa_t \vec{u}(t)\)}_{H^1 \times L^2} < \I.
\end{align*}
Since $\vec{u}(t)$ is radially symmetric with respect to $x$ for any $t \ge 0$, we can define $I_{\rho}^2(\vec{u}, \pa_t \vec{u})$ by \eqref{qu:2}.
Thus \eqref{ineq:2} is valid. 
Thanks to the Strauss decay estimate \eqref{strauss}, it is deduced that
\begin{align*}
	\re \int_{|x| \ge \rho} u_1^2(t) \overline{u_2(t)} dx \le{}& \norm{u_2(t)}_{L^{\I}(|x| \ge \rho)} \Lebn{u_1(t)}{2}^2 \\
	\le{}& C\rho^{-\frac{N-1}{2}} \Sobn{u_2(t)}{1}\Sobn{u_1(t)}{1}^2 \\
	\le{}& CM_1^3 \rho^{-\frac{N-1}{2}}
\end{align*}
for all $t \ge 0$ and any $\rho >0$. Combining $N \ge 2$ with \eqref{lem:01}, this allows us to exist $\rho_0 >0$ such that
\begin{align*}
	\sup_{t \ge 0} \( \frac12 \re\int_{|x| \ge \rho_0} u_1^2(t) \overline{u_2(t)} dx + \frac{C_0}{\rho_0^2} M(\vec{u}(t)) \) < \d.
\end{align*}
Combining the above with \eqref{ineq:2} gives us
\begin{align*}
	&{}\frac{d}{dt}I_{\rho_0}^2(\vec{u}(t), \pa_t \vec{u}(t)) \ge -H(\vec{u}(t), \pa_t \vec{u}(t)) - \d
\end{align*}  
for any $t \ge 0$. Similarly to the proof of Theorem \ref{thm:1} in $N = 2$, $3$, we obtain $-H(\vec{u}, \pa_t \vec{u}) > 2\d$.
Hence, one has $\lim_{t \to \I}I_{\rho}^2(\vec{u}(t), \pa_t \vec{u}(t)) = \I$.
On the other hand, there exists $C = C(\rho_0)>0$ such that
\begin{align*}
	I_{\rho_0}^2(\vec{u}(t), \pa_t \vec{u}(t)) \le C\norm{\( \vec{u}(t), \pa_t \vec{u}(t)\)}_{H^1 \times L^2}^2 \le CM_1^2
\end{align*}
for any $t \ge 0$. This yields a contradiction. Since \eqref{lem:50} tells us $P(\vec{u}(t)) >0$ for any $t \ge 0$, we see from Proposition \ref{lem:0} that the solution $\vec{u}(t)$ satisfies 
\[
	M_1 = \sup_{t \ge 0} \norm{(\vec{u}, \pa_t \vec{u})(t)}_{H^1 \times L^2} < \I.
\]
Therefore, the solution blows up at finite time. This completes the proof.
\end{proof}

\appendix
\section{Proof of the uniform boundedness of solutions.} \label{app:a}
Following \cite{C1}, let us show Lemma \ref{lem:uni}.
We define some notations
\begin{align*}
	&{}f(t) :=f(\vec{u}(t)) := M(\vec{u}(t)) 
\end{align*}
and $[x]^+ = \max(x, 0)$ for any $x \in \R$.
\begin{proposition} \label{app:pro1}
Let $\vec{u} \in C([0,\I), \m{H}^1)$ be a solution of \eqref{nlkg}. Let $b = \min(1, \ka)$. Then it holds that
\begin{align}
	\frac{d}{dt}\(\left[ b^2 f(t) - 6 E(\vec{u}(0), \pa_t \vec{u}(0)) \right]^+ \) \le 0, \label{app:2} \\
	f(t) \le \sup \( f(0), \frac6{b^2} E(\vec{u}(0), \pa_t \vec{u}(0))\) \label{app:3}
\end{align}
for any $t \in [0,\I)$. Moreover, 
\begin{align}
	E(\vec{u}(0), \pa_t \vec{u}(0)) \ge 0 \label{app:4}
\end{align}
is valid.
\end{proposition}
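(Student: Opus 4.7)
The plan is a Levine-type concavity argument applied to $f(t) = M(\vec{u}(t))$, combined with the standing hypothesis that $\vec{u}$ is global. First, differentiating $f$ twice and using the equations in \eqref{nlkg}, I compute
\[
f''(t) = 2M(\pa_t \vec{u}) - 2L(\vec{u}) - 2M_0(\vec{u}) + 3P(\vec{u}),
\]
where I set $M_0(\vec{u}) := \Lebn{u_1}{2}^2 + (\ka^2/2)\Lebn{u_2}{2}^2$. Eliminating $P(\vec{u})$ via conservation of $E$ (which gives $P(\vec{u}) = M(\pa_t\vec{u}) + L(\vec{u}) + M_0(\vec{u}) - 2E(\vec{u}(0), \pa_t\vec{u}(0))$) produces the cleaner form
\[
f''(t) = 5M(\pa_t \vec{u}) + L(\vec{u}) + M_0(\vec{u}) - 6E(\vec{u}(0), \pa_t\vec{u}(0)).
\]

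Second, I introduce the weighted inner product $\langle \vec{a}, \vec{b}\rangle := (a_1, b_1)_{L^2} + \tfrac{1}{2}(a_2, b_2)_{L^2}$, under which $f = \langle \vec{u}, \vec{u}\rangle$, $M(\pa_t\vec{u}) = \langle \pa_t\vec{u}, \pa_t\vec{u}\rangle$, and $f'(t) = 2\re\langle \vec{u}, \pa_t\vec{u}\rangle$. Cauchy--Schwarz in this inner product yields $M(\pa_t \vec{u}) \ge f'(t)^2/(4f(t))$ whenever $f(t) > 0$. Combining this with $L(\vec{u}) \ge 0$ and the elementary bound $M_0(\vec{u}) \ge b^2 f$ (which is how the constant $b = \min(1, \ka)$ enters), I deduce the key differential inequality
\[
f(t)f''(t) - \tfrac{5}{4}f'(t)^2 \ge f(t)\bigl(b^2 f(t) - 6E\bigr),
\]
writing $E := E(\vec{u}(0), \pa_t\vec{u}(0))$ for brevity.

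Third, I prove \eqref{app:2} by contradiction. Setting $H(t) := f(t)^{-1/4}$ (for $f>0$), a direct calculation gives
\[
H''(t) = -\tfrac{1}{4}f^{-9/4}\bigl[ f f'' - \tfrac{5}{4}(f')^2\bigr] \le -\tfrac{1}{4}f^{-5/4}\bigl(b^2 f - 6E\bigr),
\]
so that $H$ is strictly concave whenever $b^2 f > 6E$. Suppose to the contrary that there exists $t_0 \ge 0$ with $b^2 f(t_0) > 6E$ and $f'(t_0) > 0$. Then $H(t_0) > 0$ and $H'(t_0) = -\tfrac{1}{4} f(t_0)^{-5/4} f'(t_0) < 0$. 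On any interval where $b^2 f > 6E$, the weaker consequence $f'' \ge b^2 f - 6E > 0$ of the above identity shows that $f'$ is strictly increasing; since $f'(t_0) > 0$, a standard continuity/bootstrap argument shows that both inequalities $b^2 f(t) > 6E$ and $f'(t) > 0$ persist for all $t \ge t_0$. Consequently $H$ remains concave with $H'(t_0) < 0$ throughout $[t_0, \infty)$, and the concavity bound $H(t) \le H(t_0) + H'(t_0)(t - t_0)$ forces $H$ to vanish by the finite time $T^* := t_0 - H(t_0)/H'(t_0)$. Since $H = f^{-1/4}$, this requires $f(t) \to \infty$ as $t \to T^{*-}$, contradicting $\vec{u} \in C([0,\infty), \m{H}^1)$. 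Hence $f'(t) \le 0$ whenever $b^2 f(t) > 6E$, which is exactly the content of \eqref{app:2}.

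Finally, \eqref{app:3} follows from \eqref{app:2} by case distinction on the sign of $b^2 f(0) - 6E$: monotonicity of $[b^2 f - 6E]^+$ bounds it by its initial value, yielding $f(t) \le \max(f(0), 6E/b^2)$. For \eqref{app:4}, if $E < 0$ then $f''(t) \ge b^2 f(t) - 6E \ge -6E > 0$ for every $t$, so $f$ grows at least linearly to infinity, contradicting \eqref{app:3}. The principal obstacle is the precise constant matching in the key inequality: the coefficient $5$ in the identity for $f''$ must combine with $1/(4f)$ from Cauchy--Schwarz to yield exactly $5/4$, which is the unique exponent making $f^{-1/4}$ concave under the inequality; any other power would either fail concavity or fail to be driven to zero in finite time, and the blow-up/global-existence contradiction would collapse.
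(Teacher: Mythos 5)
Your proof is correct and follows essentially the same route as the paper's: the identity $f''=5M(\pa_t\vec{u})+L(\vec{u})+M_0(\vec{u})-6E$ via energy conservation, the bound $M_0\ge b^2 f$, persistence of $b^2f>6E$ and $f'>0$, the Cauchy--Schwarz estimate $(f')^2\le 4fM(\pa_t\vec{u})$, and the concavity of $f^{-1/4}$ to contradict global existence, with \eqref{app:3} and \eqref{app:4} deduced exactly as in the paper. The only (harmless) cosmetic differences are your use of a weighted inner product to package the Cauchy--Schwarz step and your closing of the contradiction via finite-time vanishing of $f^{-1/4}$ rather than its decay to zero at infinity.
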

\begin{remark}
\eqref{app:3} implies \eqref{lem:01}.
\end{remark}
\begin{proof}
Set
\[
	g(t) = f(t) - \frac{6}{b^2} E(\vec{u}(0), \pa_t \vec{u}(0)).
\]
Let us show by the contradiction. Namely, assume that there exists $t_1 \in [0,\I)$ such that $$\frac{d}{dt}\( [ b^2 f(t_1) - 6 E(\vec{u}(0), \pa_t \vec{u}(0))]^+ \) >0.$$ Hence we have $g'(t_1)>0$ and $g(t_1)>0$. Further one sees that
\begin{align}
	\begin{aligned}
	g''(t) =f''(t)={}& 5M(\pa_t \vec{u}(t)) - 6 E(\vec{u}(0), \pa_t \vec{u}(0)) \\
	&{}+ \Lebn{u_1(t)}{2}^2 + \frac{\ka^2}{2} \Lebn{u_2(t)}{2}^2 + L(\vec{u}(t)),
	\end{aligned}
	\label{app:5}
\end{align}
which implies
\begin{align*}
	g''(t) \ge b^2 M(\vec{u}(t)) - 6 E(\vec{u}(0), \pa_t \vec{u}(0)) \ge b^2 g(t)
\end{align*}
for any $t \in [0,\I)$. Therefore, $g$ is a convex increasing function on $[t_1,\I)$ with $\lim_{t \to \I}g(t) = \I$. This gives us $g(t) \ge 0$ for any $t \ge t_1$. By using this fact, we estimate
\begin{align*}
	f''(t) = {}& 5M(\pa_t \vec{u}(t)) - 6 E(\vec{u}(0), \pa_t \vec{u}(0)) \\
	&{}+ \Lebn{u_1(t)}{2}^2 + \frac{\ka^2}{2} \Lebn{u_2(t)}{2}^2 + L(\vec{u}(t)) \\
	\ge{}& 5M(\pa_t \vec{u}(t)) + b^2 M(\vec{u}(t)) - 6 E(\vec{u}(0), \pa_t \vec{u}(0)) \\
	\ge{}& 5M(\pa_t \vec{u}(t)).
\end{align*}
This tells us that
\begin{align*}
	(f'(t))^2 ={}& 4\( \re \( u_1, \pa_t u_1\)_{L^2} \)^2 + 4 \re \( u_1, \pa_t u_1\)_{L^2} \re \( u_2, \pa_t u_2 \)_{L^2} \\
	&{}+ \( \re \( u_2, \pa_t u_2\)_{L^2} \)^2 \\
	\le{}& 4 \Lebn{u_1}{2}^2 \Lebn{\pa_t u_1}{2}^2 + 4 \Lebn{u_1}{2}\Lebn{\pa_t u_1}{2} \Lebn{u_2}{2} \Lebn{\pa_t u_2}{2} \\
	&{}+ \Lebn{u_2}{2}^2 \Lebn{\pa_t u_2}{2}^2 \\
	\le{}& 4 M(\vec{u}(t)) M(\pa_t \vec{u}(t)) \le \frac45 f(t) f''(t),
\end{align*}
which yields
\[
	\frac54 (f'(t))^2 \le f(t)f''(t)
\]
for any $t \ge t_1$. From the above, we see that
\[
	\((f(t))^{-\frac14}\)'' = -\frac14 f^{-\frac94} \( -\frac54(f')^2 + f f'' \) \le 0.
\]
Hence, $(f(t))^{-\frac14}$ is concave on $[t_1, \I)$. On the other hand, since $g$ is a increasing function on $[t_1, \I)$, $f$ is also increasing on $[t_1, \I)$. This implies $(f(t))^{-\frac14} \to 0$ as $t \to \I$, which contradicts that $(f(t))^{-\frac14}$ is concave on $[t_1, \I)$. Therefore, we have \eqref{app:2}. Furthermore, by \eqref{app:2}, it holds that
\[
	b^2 f(t) -6 E(\vec{u}(0), \pa_t \vec{u}(0)) \le \left[ b^2 f(0) -6 E(\vec{u}(0), \pa_t \vec{u}(0)) \right]^+
\]
for any $t \in [0,\I)$. This yields \eqref{app:3}. We shall show \eqref{app:4}. If $E(\vec{u}(0), \pa_t \vec{u}(0))<0$, then we have
\[
	f''(t) \ge -6E(\vec{u}(0), \pa_t \vec{u}(0)) =:\a >0.
\]
This implies $f'(t) \ge \a t + f'(0)$ for any $t \ge 0$. Hence, there exists $t_2>0$ such that $f'(t) >0$ for any $t \in [t_2, \I)$, which implies $f(t) \to \I$ as $t \to \I$. This contradicts \eqref{app:2}. The proof is completed. 
\end{proof}

\begin{lemma}
Let $\vec{u} \in C([0,\I), \m{H}^1)$ be a solution of \eqref{nlkg}. Let $b = \min(1, \ka)$. Then it holds that
\begin{align}
	f'(t) \le{}& \frac{6}{\sqrt{5}b} E(\vec{u}(0), \pa_t \vec{u}(0)), \label{app:6} \\
	f'(t) \ge{}& \inf \( f'(0), -\frac{6}{\sqrt{5}b} E(\vec{u}(0), \pa_t \vec{u}(0)) \) \label{app:7}
\end{align}
for any $t \in [0,\I)$. 
\end{lemma}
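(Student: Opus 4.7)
The plan is to derive a single second-order differential inequality for $f(t) = M(\vec{u}(t))$ that pins the growth of $f'(t)$ to the threshold $C_E := 6 E(\vec{u}(0), \pa_t \vec{u}(0))/(\sqrt{5}\, b)$, and then to argue by contradiction using the a priori bound \eqref{app:3}. The bounds \eqref{app:6} and \eqref{app:7} will be obtained by parallel but not identical arguments; the asymmetry between them is dictated by the fact that $f(t)$ is bounded above by \eqref{app:3} but only bounded below by zero.

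First I would combine the Cauchy--Schwarz estimate $(f'(t))^2 \le 4 f(t) M(\pa_t \vec{u}(t))$ with the identity \eqref{app:5} to eliminate $M(\pa_t \vec{u}(t))$ in favour of $f''(t)$. Using the pointwise lower bound $\Lebn{u_1(t)}{2}^2 + \frac{\ka^2}{2}\Lebn{u_2(t)}{2}^2 \ge b^2 f(t)$, the nonnegativity of $L(\vec{u}(t))$, the elementary Young-type bound $6Ef - b^2 f^2 \le 9E^2/b^2$, and \eqref{app:4}, the calculation is expected to produce the pointwise estimate
\[
(f'(t))^2 \le \tfrac{4}{5}\, f(t)\, f''(t) + C_E^2, \qquad t \ge 0.
\]
In particular, whenever $|f'(t)| > C_E$ the right-hand side forces $f''(t) > 0$; that is, $f$ is strictly convex on any open interval on which $|f'|$ exceeds the threshold $C_E$.

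For \eqref{app:6} I would argue by contradiction: assuming $f'(t^*) > C_E$ at some $t^* \ge 0$ and setting $T := \inf\{ t > t^* : f'(t) \le C_E \}$, the convexity observation gives $f''(t) > 0$ on $[t^*, T)$, so $f'$ is strictly increasing there, and letting $t \to T^{-}$ contradicts $f'(T) = C_E$ when $T$ is finite. Hence $T = \infty$, $f'(t) \ge f'(t^*) > C_E$ for all $t \ge t^*$, which forces $f(t) \to \infty$ and contradicts \eqref{app:3}. For \eqref{app:7}, set $S^{-} := \{ t \ge 0 : f'(t) < -C_E \}$. On any connected component of $S^{-}$, $f'' > 0$ and $f'$ is strictly increasing; a component of the form $(a,b)$ with $a > 0$ would satisfy $f'(a) = -C_E$ by continuity and then monotonicity would yield $f'(t) > -C_E$ on $(a,b)$, contradicting $(a,b) \subset S^{-}$. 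The only admissible component is therefore $[0, b)$, on which monotonicity gives $f'(t) \ge f'(0)$, while on the complement $f'(t) \ge -C_E$; combining these two cases produces \eqref{app:7}.

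The main obstacle is the continuity/monotonicity bookkeeping in the contradiction step, which rests crucially on the fact that the differential inequality in Step~1 collapses to strict convexity precisely in the regime $|f'| > C_E$, so that any excursion of $f'$ beyond the threshold propagates in the forward direction until \eqref{app:3} is violated; care is also required to handle the edge case $f(t) = 0$, which forces $\vec{u}(t) \equiv 0$ and reduces the estimates to the trivial solution.
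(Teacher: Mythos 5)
Your proposal is correct, but it proves the lemma by a genuinely different mechanism than the paper. The paper applies the Young inequality with the hand-picked parameter $\e = b/\sqrt{5}$ directly to $|f'(t)| \le 2\Lebn{u_1}{2}\Lebn{\pa_t u_1}{2} + \Lebn{u_2}{2}\Lebn{\pa_t u_2}{2}$, which combined with \eqref{app:5} yields the \emph{linear} first-order differential inequality $f''(t) \ge \sqrt{5}\,b\,|f'(t)| - 6E(\vec{u}(0),\pa_t\vec{u}(0))$; setting $h = f' - \tfrac{6}{\sqrt{5}b}E$ and $k = -f' - \tfrac{6}{\sqrt{5}b}E$ then gives $h' \ge \sqrt{5}b\,h$ and $k' \le -\sqrt{5}b\,k$, so that any positive value of $h$ grows exponentially (contradicting \eqref{app:3}) while $k$ decays from its initial value, which is exactly \eqref{app:6} and \eqref{app:7}. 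You instead recycle the Cauchy--Schwarz bound $(f')^2 \le 4fM(\pa_t\vec{u})$ from Proposition \ref{app:pro1}, eliminate $M(\pa_t\vec{u})$ via \eqref{app:5} and the bound $\Lebn{u_1}{2}^2 + \tfrac{\ka^2}{2}\Lebn{u_2}{2}^2 \ge b^2 f$, and optimize the quadratic $6Ef - b^2f^2 \le 9E^2/b^2$ to arrive at $(f')^2 \le \tfrac45 ff'' + C_E^2$ with $C_E = \tfrac{6}{\sqrt{5}b}E$ --- I checked the constants and they match. Your threshold/convexity bookkeeping is sound: since $f \ge 0$, the regime $|f'|>C_E$ forces $f>0$ and $f''>0$, so an excursion of $f'$ above $C_E \ge 0$ (using \eqref{app:4}) persists forever and drives $f \to \I$ against \eqref{app:3}, while a component of $\{f' < -C_E\}$ with positive left endpoint is ruled out by monotonicity, leaving only a possible initial component on which $f' \ge f'(0)$. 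The trade-off: the paper's route is shorter and yields quantitative exponential rates for $h$ and $k$; yours avoids guessing the Young parameter (the constant $C_E$ falls out of the optimization) and reuses machinery already set up in the appendix, at the cost of a more delicate continuity argument. One cosmetic remark: you use $b$ both for $\min(1,\ka)$ and for the right endpoint of a connected component --- rename the latter.
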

\begin{proof}
Set
\[
	h(t) = f'(t) - \frac{6}{\sqrt{5}b} E(\vec{u}(0), \pa_t \vec{u}(0)).
\]
A use of the Young inequality $2xy \le \e x^2 + \e^{-1} y^2$ with $\e = \frac{b}{\sqrt{5}}$ gives us 
\begin{align*}
	|f'(t)| \le{}& 2\left|\( u_1, \pa_t u_1\)_{L^2} \right| + \left|\( u_2, \pa_t u_2\)_{L^2} \right| \\
	\le{}& 2 \Lebn{u_1}{2} \Lebn{\pa_t u_1}{2} + \Lebn{u_2}{2} \Lebn{\pa_t u_2}{2} \\
	\le{}& \frac{b}{\sqrt{5}}\Lebn{u_1}{2}^2 + \frac{\sqrt{5}}{b} \Lebn{\pa_t u_1}{2}^2 + \frac12 \( \frac{b}{\sqrt{5}} \Lebn{u_2}{2}^2 + \frac{\sqrt{5}}{b} \Lebn{\pa_t u_2}{2}^2 \).
\end{align*}
This yields
\begin{align}
	\sqrt{5}b |f'(t)| \le{}&  \Lebn{u_1(t)}{2}^2 + \frac{\ka^2}{2} \Lebn{u_2(t)}{2}^2 + 5 M(\pa_t \vec{u}(t)).
	\label{app:8}
\end{align}
By means of \eqref{app:5} and \eqref{app:8}, we have
\begin{align*}
	h'(t) = f''(t) \ge \sqrt{5}b |f'(t)| - 6E(\vec{u}(0), \pa_t \vec{u}(0)) \ge \sqrt{5}b h(t)
\end{align*}
for any $t \in [0,\I)$. This tells us that
\begin{align}
	h(t) \ge h(t_0)e^{\sqrt{5}b(t-t_0)} \label{app:9}
\end{align}
for any $t_0$, $t \in [0,\I)$ with $t_0 \le t$. Let us suppose that there exists $t_1 \in [0,\I)$ such $h(t_1)>0$. 
Then we see from \eqref{app:9} that
\[
	f'(t) \ge \frac{6}{\sqrt{5}b} E(\vec{u}(0), \pa_t \vec{u}(0)) + h(t_1)e^{\sqrt{5}b(t-t_1)}>0
\]
for any $t>t_1$, which yields $f(t) \to \I$ as $t \to \I$. This contradicts \eqref{app:3}. 
We then complete the proof of \eqref{app:6}. Let us prove \eqref{app:7}. Put
\[
	k(t) = -f'(t) - \frac{6}{\sqrt{5}b}E(\vec{u}(0), \pa_t \vec{u}(0)).
\]
By Combining \eqref{app:5} with \eqref{app:8}, we have
\begin{align*}
	-k'(t) = f''(t) \ge{}& F(t) -6 E(\vec{u}(0), \pa_t \vec{u}(0)) \\
	\ge{}& \sqrt{5}b |f'(t)| - 6 E(\vec{u}(0), \pa_t \vec{u}(0)) \\
	\ge{}& - \sqrt{5}b f'(t) - 6 E(\vec{u}(0), \pa_t \vec{u}(0)) \\
	={}& \sqrt{5}b k(t).
\end{align*}
This gives us $k(t) \le k(0) e^{-\sqrt{5}b t}$ for any $t \in [0,\I)$. Thus, it holds that $k(t) \le \sup(k(0), 0)$, which implies \eqref{app:7}. We complete the proof. 
\end{proof}

\begin{lemma} \label{app:lem1}
Let $\vec{u} \in C([0,\I), \m{H}^1)$ be a solution of \eqref{nlkg}. Then the estimate 
\begin{align*}
	\int_{t}^{t+\tau} F(s)\, ds \le{}& 6E(\vec{u}(0), \pa_t \vec{u}(0)) \tau + \frac{12}{\sqrt{5}b}E(\vec{u}(0), \pa_t \vec{u}(0)) \\
	&{}+ 2\left|\(u_1(0), \pa_t u_1(0) \)_{L^2}\right| + \left|\(u_2(0), \pa_t u_2(0) \)_{L^2}\right|
\end{align*}
for any $t \ge 0$ and all $\tau >0$, where
\begin{align*}
	F(t) ={}& F(\vec{u}(s), \pa_t \vec{u}(s)) \\
	={}& 5M(\pa_t \vec{u}(t)) + \Lebn{u_1(t)}{2}^2 + \frac{\ka^2}{2} \Lebn{u_2(t)}{2}^2 + L(\vec{u}(t)).
\end{align*}
\end{lemma}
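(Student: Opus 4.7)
The plan is to start from identity \eqref{app:5}, which rewrites the second derivative of $f$ as
\[
f''(t) = F(t) - 6E(\vec{u}(0), \pa_t \vec{u}(0)),
\]
so that $F(t) = f''(t) + 6E(\vec{u}(0), \pa_t \vec{u}(0))$. Thus the quantity to be estimated is just an antiderivative, and integrating over $[t, t+\tau]$ yields
\[
\int_{t}^{t+\tau} F(s)\, ds = f'(t+\tau) - f'(t) + 6E(\vec{u}(0), \pa_t \vec{u}(0))\,\tau.
\]
The constant term on the right-hand side of the stated inequality thus appears automatically, and everything reduces to bounding the increment $f'(t+\tau) - f'(t)$.

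For this, I would invoke the two-sided control \eqref{app:6}–\eqref{app:7} already established for $f'$. The upper bound \eqref{app:6} gives $f'(t+\tau) \le \frac{6}{\sqrt{5}b}E(\vec{u}(0), \pa_t \vec{u}(0))$, while \eqref{app:7} gives $-f'(t) \le -\inf\!\bigl(f'(0), -\frac{6}{\sqrt{5}b}E(\vec{u}(0), \pa_t \vec{u}(0))\bigr) \le |f'(0)| + \frac{6}{\sqrt{5}b}E(\vec{u}(0), \pa_t \vec{u}(0))$. Adding these two produces
\[
f'(t+\tau) - f'(t) \le \frac{12}{\sqrt{5}b}E(\vec{u}(0), \pa_t \vec{u}(0)) + |f'(0)|.
\]

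It only remains to expand $|f'(0)|$. By the definition of $f$, $f'(0) = 2\re(u_1(0),\pa_t u_1(0))_{L^2} + \re(u_2(0),\pa_t u_2(0))_{L^2}$, so the triangle inequality immediately gives
\[
|f'(0)| \le 2\left|\(u_1(0), \pa_t u_1(0)\)_{L^2}\right| + \left|\(u_2(0), \pa_t u_2(0)\)_{L^2}\right|.
\]
Combining the three displays above produces exactly the claimed inequality. There is no real obstacle here: all the hard work has already been done in Proposition~\ref{app:pro1} and in the preceding lemma, and Lemma~\ref{app:lem1} is essentially their bookkeeping corollary. The only point requiring mild care is handling the two possible branches of the infimum in \eqref{app:7}, which is dispatched by the triangle inequality.
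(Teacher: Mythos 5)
Your proof is correct and follows essentially the same route as the paper: integrate \eqref{app:5} over $[t,t+\tau]$ to reduce the claim to bounding $f'(t+\tau)-f'(t)$, then apply \eqref{app:6} and \eqref{app:7} and expand $|f'(0)|$ by the triangle inequality. The only detail worth flagging is that your step $-\inf\bigl(f'(0),\,-\tfrac{6}{\sqrt{5}b}E\bigr)\le |f'(0)|+\tfrac{6}{\sqrt{5}b}E$ implicitly uses $E(\vec{u}(0),\pa_t\vec{u}(0))\ge 0$, which is exactly \eqref{app:4} from Proposition \ref{app:pro1}, so it is available.
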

\begin{remark}
Lemma \ref{app:lem1} gives us \eqref{lem:02}.
\end{remark}
\begin{proof}
Integrating the both side of \eqref{app:5} for $t$, we have
\begin{align*}
	\int_{t}^{t+\tau} f''(s)\, ds = \int_t^{t+\tau} F(s)\, ds - 6E(\vec{u}(0), \pa_t \vec{u}(0)) \tau. 
\end{align*}
This implies that
\[
	\int_t^{t+\tau} F(s)\, ds = 6E(\vec{u}(0), \pa_t \vec{u}(0)) \tau + f'(t+\tau) -f'(t).
\]
Hence, the desired estimate follows from \eqref{app:6} and \eqref{app:7}.
\end{proof}

\subsection*{Acknowledgments} 
The part of this work was done while the author
was visiting at Department of Mathematics at the University of California,
Santa Barbara whose hospitality he gratefully acknowledges.
The author is very grateful to Dr. Masahiro Ikeda for the helpful advice and valuable suggestions to consider this problem.
The author would also like to express deep gratitude to Professor Masahito Ohta for introducing him the paper \cite{BJM1}. 
The author was supported by JSPS KAKENHI Grant Numbers 19K14580 and the Overseas Research Fellowship Program by National Institute of Technology.


\begin{bibdiv}
\begin{biblist}

\bib{AF1}{book}{
      author={Adams, Robert~A.},
      author={Fournier, John J.~F.},
       title={Sobolev spaces},
     edition={Second},
      series={Pure and Applied Mathematics (Amsterdam)},
   publisher={Elsevier/Academic Press, Amsterdam},
        date={2003},
      volume={140},
        ISBN={0-12-044143-8},
      review={\MR{2424078}},
}

\bib{BC}{article}{
      author={Berestycki, Henri},
      author={Cazenave, Thierry},
       title={Instabilit\'{e} des \'{e}tats stationnaires dans les
  \'{e}quations de {S}chr\"{o}dinger et de {K}lein-{G}ordon non lin\'{e}aires},
        date={1981},
        ISSN={0249-6321},
     journal={C. R. Acad. Sci. Paris S\'{e}r. I Math.},
      volume={293},
      number={9},
       pages={489\ndash 492},
      review={\MR{646873}},
}

\bib{BL1}{article}{
      author={Brezis, Ha\"{\i}m},
      author={Lieb, Elliott~H.},
       title={Minimum action solutions of some vector field equations},
        date={1984},
        ISSN={0010-3616},
     journal={Comm. Math. Phys.},
      volume={96},
      number={1},
       pages={97\ndash 113},
         url={http://projecteuclid.org/euclid.cmp/1103941720},
      review={\MR{765961}},
}

\bib{BJM1}{article}{
      author={Byeon, Jaeyoung},
      author={Jeanjean, Louis},
      author={Mari\c{s}, Mihai},
       title={Symmetry and monotonicity of least energy solutions},
        date={2009},
        ISSN={0944-2669},
     journal={Calc. Var. Partial Differential Equations},
      volume={36},
      number={4},
       pages={481\ndash 492},
         url={https://doi.org/10.1007/s00526-009-0238-1},
      review={\MR{2558325}},
}

\bib{C1}{article}{
      author={Cazenave, Thierry},
       title={Uniform estimates for solutions of nonlinear {K}lein-{G}ordon
  equations},
        date={1985},
        ISSN={0022-1236},
     journal={J. Funct. Anal.},
      volume={60},
      number={1},
       pages={36\ndash 55},
         url={https://doi.org/10.1016/0022-1236(85)90057-6},
      review={\MR{780103}},
}

\bib{C2}{book}{
      author={Cazenave, Thierry},
       title={Semilinear {S}chr\"odinger equations},
      series={Courant Lecture Notes in Mathematics},
   publisher={New York University, Courant Institute of Mathematical Sciences,
  New York; American Mathematical Society, Providence, RI},
        date={2003},
      volume={10},
        ISBN={0-8218-3399-5},
         url={https://doi.org/10.1090/cln/010},
      review={\MR{2002047}},
}

\bib{CP}{article}{
      author={Comech, Andrew},
      author={Pelinovsky, Dmitry},
       title={Purely nonlinear instability of standing waves with minimal
  energy},
        date={2003},
        ISSN={0010-3640},
     journal={Comm. Pure Appl. Math.},
      volume={56},
      number={11},
       pages={1565\ndash 1607},
         url={https://doi.org/10.1002/cpa.10104},
      review={\MR{1995870}},
}

\bib{V1}{article}{
      author={Duong~Dinh, Van},
       title={Existence, stability of standing waves and the characterization
  of finite time blow-up solutions for a system {NLS} with quadratic
  interaction},
     journal={preprint},
      eprint={arXiv:1809.09643},
}

\bib{V2}{article}{
      author={Duong~Dinh, Van},
       title={Strong instability of standing waves for a system nls with
  quadratic interaction},
     journal={preprint},
      eprint={arXiv:1810.00676},
}

\bib{DG1}{article}{
      author={Garrisi, Daniele},
       title={On the orbital stability of standing-wave solutions to a coupled
  non-linear {K}lein-{G}ordon equation},
        date={2012},
        ISSN={1536-1365},
     journal={Adv. Nonlinear Stud.},
      volume={12},
      number={3},
       pages={639\ndash 658},
         url={https://doi.org/10.1515/ans-2012-0311},
      review={\MR{2976057}},
}

\bib{GSS2}{article}{
      author={Grillakis, Manoussos},
      author={Shatah, Jalal},
      author={Strauss, Walter},
       title={Stability theory of solitary waves in the presence of symmetry.
  {I}},
        date={1987},
        ISSN={0022-1236},
     journal={J. Funct. Anal.},
      volume={74},
      number={1},
       pages={160\ndash 197},
         url={https://doi.org/10.1016/0022-1236(87)90044-9},
      review={\MR{901236}},
}

\bib{GSS1}{article}{
      author={Grillakis, Manoussos},
      author={Shatah, Jalal},
      author={Strauss, Walter},
       title={Stability theory of solitary waves in the presence of symmetry.
  {II}},
        date={1990},
        ISSN={0022-1236},
     journal={J. Funct. Anal.},
      volume={94},
      number={2},
       pages={308\ndash 348},
         url={https://doi.org/10.1016/0022-1236(90)90016-E},
      review={\MR{1081647}},
}

\bib{Ha1}{article}{
      author={Hamano, Masaru},
       title={Global dynamics below the ground state for the quadratic
  schodinger system in 5d},
     journal={preprint},
      eprint={arXiv:1805.12245},
}

\bib{HIN1}{article}{
      author={Hayashi, Nakao},
      author={Ikeda, Masahiro},
      author={Naumkin, Pavel~I.},
       title={Wave operator for the system of the {D}irac-{K}lein-{G}ordon
  equations},
        date={2011},
        ISSN={0170-4214},
     journal={Math. Methods Appl. Sci.},
      volume={34},
      number={8},
       pages={896\ndash 910},
         url={https://doi.org/10.1002/mma.1409},
      review={\MR{2828738}},
}

\bib{HOT}{article}{
      author={Hayashi, Nakao},
      author={Ozawa, Tohru},
      author={Tanaka, Kazunaga},
       title={On a system of nonlinear {S}chr\"odinger equations with quadratic
  interaction},
        date={2013},
        ISSN={0294-1449},
     journal={Ann. Inst. H. Poincar\'e Anal. Non Lin\'eaire},
      volume={30},
      number={4},
       pages={661\ndash 690},
         url={https://doi.org/10.1016/j.anihpc.2012.10.007},
      review={\MR{3082479}},
}

\bib{L}{article}{
      author={Lieb, Elliott~H.},
       title={On the lowest eigenvalue of the {L}aplacian for the intersection
  of two domains},
        date={1983},
        ISSN={0020-9910},
     journal={Invent. Math.},
      volume={74},
      number={3},
       pages={441\ndash 448},
         url={https://doi.org/10.1007/BF01394245},
      review={\MR{724014}},
}

\bib{LL1}{book}{
      author={Lieb, Elliott~H.},
      author={Loss, Michael},
       title={Analysis},
     edition={Second},
      series={Graduate Studies in Mathematics},
   publisher={American Mathematical Society, Providence, RI},
        date={2001},
      volume={14},
        ISBN={0-8218-2783-9},
         url={https://doi.org/10.1090/gsm/014},
      review={\MR{1817225}},
}

\bib{LOT}{article}{
      author={Liu, Yue},
      author={Ohta, Masahito},
      author={Todorova, Grozdena},
       title={Instabilit\'{e} forte d'ondes solitaires pour des \'{e}quations
  de {K}lein-{G}ordon non lin\'{e}aires et des \'{e}quations
  g\'{e}n\'{e}ralis\'{e}es de {B}oussinesq},
        date={2007},
        ISSN={0294-1449},
     journal={Ann. Inst. H. Poincar\'{e} Anal. Non Lin\'{e}aire},
      volume={24},
      number={4},
       pages={539\ndash 548},
         url={https://doi.org/10.1016/j.anihpc.2006.03.005},
      review={\MR{2334991}},
}

\bib{M1}{article}{
      author={Maeda, Masaya},
       title={Stability of bound states of {H}amiltonian {PDE}s in the
  degenerate cases},
        date={2012},
        ISSN={0022-1236},
     journal={J. Funct. Anal.},
      volume={263},
      number={2},
       pages={511\ndash 528},
         url={https://doi.org/10.1016/j.jfa.2012.04.006},
      review={\MR{2923422}},
}

\bib{MZ}{article}{
      author={Merle, Frank},
      author={Zaag, Hatem},
       title={Determination of the blow-up rate for the semilinear wave
  equation},
        date={2003},
        ISSN={0002-9327},
     journal={Amer. J. Math.},
      volume={125},
      number={5},
       pages={1147\ndash 1164},
  url={http://muse.jhu.edu/journals/american_journal_of_mathematics/v125/125.5merle.pdf},
      review={\MR{2004432}},
}

\bib{Na}{article}{
      author={Nawa, Hayato},
       title={Asymptotic profiles of blow-up solutions of the nonlinear
  {S}chr\"{o}dinger equation with critical power nonlinearity},
        date={1994},
        ISSN={0025-5645},
     journal={J. Math. Soc. Japan},
      volume={46},
      number={4},
       pages={557\ndash 586},
         url={https://doi.org/10.2969/jmsj/04640557},
      review={\MR{1291107}},
}

\bib{OgT1}{article}{
      author={Ogawa, Takayoshi},
      author={Tsutsumi, Yoshio},
       title={Blow-up of {$H^1$} solution for the nonlinear {S}chr\"{o}dinger
  equation},
        date={1991},
        ISSN={0022-0396},
     journal={J. Differential Equations},
      volume={92},
      number={2},
       pages={317\ndash 330},
         url={https://doi.org/10.1016/0022-0396(91)90052-B},
      review={\MR{1120908}},
}

\bib{OgT2}{article}{
      author={Ogawa, Takayoshi},
      author={Tsutsumi, Yoshio},
       title={Blow-up of {$H^1$} solutions for the one-dimensional nonlinear
  {S}chr\"{o}dinger equation with critical power nonlinearity},
        date={1991},
        ISSN={0002-9939},
     journal={Proc. Amer. Math. Soc.},
      volume={111},
      number={2},
       pages={487\ndash 496},
         url={https://doi.org/10.2307/2048340},
      review={\MR{1045145}},
}

\bib{OT2}{article}{
      author={Ohta, Masahito},
      author={Todorova, Grozdena},
       title={Strong instability of standing waves for nonlinear
  {K}lein-{G}ordon equations},
        date={2005},
        ISSN={1078-0947},
     journal={Discrete Contin. Dyn. Syst.},
      volume={12},
      number={2},
       pages={315\ndash 322},
         url={https://doi.org/10.1137/050643015},
      review={\MR{2122169}},
}

\bib{OT}{article}{
      author={Ohta, Masahito},
      author={Todorova, Grozdena},
       title={Strong instability of standing waves for the nonlinear
  {K}lein-{G}ordon equation and the {K}lein-{G}ordon-{Z}akharov system},
        date={2007},
        ISSN={0036-1410},
     journal={SIAM J. Math. Anal.},
      volume={38},
      number={6},
       pages={1912\ndash 1931},
         url={https://doi.org/10.1137/050643015},
      review={\MR{2299435}},
}

\bib{PS1}{article}{
      author={Payne, L.~E.},
      author={Sattinger, D.~H.},
       title={Saddle points and instability of nonlinear hyperbolic equations},
        date={1975},
        ISSN={0021-2172},
     journal={Israel J. Math.},
      volume={22},
      number={3-4},
       pages={273\ndash 303},
         url={https://doi.org/10.1007/BF02761595},
      review={\MR{0402291}},
}

\bib{P}{article}{
      author={Pecher, Hartmut},
       title={Nonlinear small data scattering for the wave and {K}lein-{G}ordon
  equation},
        date={1984},
        ISSN={0025-5874},
     journal={Math. Z.},
      volume={185},
      number={2},
       pages={261\ndash 270},
         url={https://doi.org/10.1007/BF01181697},
      review={\MR{731347}},
}

\bib{Sh}{article}{
      author={Shatah, Jalal},
       title={Stable standing waves of nonlinear {K}lein-{G}ordon equations},
        date={1983},
        ISSN={0010-3616},
     journal={Comm. Math. Phys.},
      volume={91},
      number={3},
       pages={313\ndash 327},
         url={http://projecteuclid.org/euclid.cmp/1103940612},
      review={\MR{723756}},
}

\bib{Sh2}{article}{
      author={Shatah, Jalal},
       title={Unstable ground state of nonlinear {K}lein-{G}ordon equations},
        date={1985},
        ISSN={0002-9947},
     journal={Trans. Amer. Math. Soc.},
      volume={290},
      number={2},
       pages={701\ndash 710},
         url={https://doi.org/10.2307/2000308},
      review={\MR{792821}},
}

\bib{ShS}{article}{
      author={Shatah, Jalal},
      author={Strauss, Walter},
       title={Instability of nonlinear bound states},
        date={1985},
        ISSN={0010-3616},
     journal={Comm. Math. Phys.},
      volume={100},
      number={2},
       pages={173\ndash 190},
         url={http://projecteuclid.org/euclid.cmp/1103943442},
      review={\MR{804458}},
}

\bib{St}{article}{
      author={Strauss, Walter~A.},
       title={Existence of solitary waves in higher dimensions},
        date={1977},
        ISSN={0010-3616},
     journal={Comm. Math. Phys.},
      volume={55},
      number={2},
       pages={149\ndash 162},
         url={http://projecteuclid.org/euclid.cmp/1103900983},
      review={\MR{0454365}},
}

\bib{YT1}{article}{
      author={Tsutsumi, Yoshio},
       title={Stability of constant equilibrium for the {M}axwell-{H}iggs
  equations},
        date={2003},
        ISSN={0532-8721},
     journal={Funkcial. Ekvac.},
      volume={46},
      number={1},
       pages={41\ndash 62},
         url={https://doi.org/10.1619/fesi.46.41},
      review={\MR{1996293}},
}

\bib{BW1}{article}{
      author={Wang, Baoxiang},
       title={On existence and scattering for critical and subcritical
  nonlinear {K}lein-{G}ordon equations in {$H^s$}},
        date={1998},
        ISSN={0362-546X},
     journal={Nonlinear Anal.},
      volume={31},
      number={5-6},
       pages={573\ndash 587},
         url={https://doi.org/10.1016/S0362-546X(97)00424-0},
      review={\MR{1487847}},
}

\bib{Wu}{article}{
      author={Wu, Yifei},
       title={Instability of the standing waves for the nonlinear klein-gordon
  equations in one dimension},
     journal={preprint},
      eprint={arXiv:1705.04216},
}

\bib{ZGG1}{article}{
      author={Zhang, Jian},
      author={Gan, Zai-hui},
      author={Guo, Bo-ling},
       title={Stability of the standing waves for a class of coupled nonlinear
  {K}lein-{G}ordon equations},
        date={2010},
        ISSN={0168-9673},
     journal={Acta Math. Appl. Sin. Engl. Ser.},
      volume={26},
      number={3},
       pages={427\ndash 442},
         url={https://doi.org/10.1007/s10255-010-0008-z},
      review={\MR{2657700}},
}

\end{biblist}
\end{bibdiv}


\end{document}